\documentclass[12pt,oneside, reqno]{amsart}
\usepackage{txfonts, bbm}
\usepackage{amsfonts}
\usepackage{mathrsfs}
\usepackage{amsmath}
\usepackage{amssymb, amsmath}
\usepackage{bbm, color, soul}
\usepackage{stmaryrd}
 \usepackage{enumerate}
\newcommand{\be}{\begin{eqnarray}}
\newcommand{\ee}{\end{eqnarray}}
\newcommand{\ce}{\begin{eqnarray*}}
\newcommand{\de}{\end{eqnarray*}}
\newtheorem{theorem}{Theorem}[section]
\newtheorem{lemma}[theorem]{Lemma}
\newtheorem{remark}[theorem]{Remark}
\newtheorem{definition}[theorem]{Definition}
\newtheorem{proposition}[theorem]{Proposition}

\newtheorem{corollary}[theorem]{Corollary}

\def\e{\varepsilon}

\def\p{\partial}

\def\[{{\Big[}}
\def\]{{\Big]}}
\def\<{{\langle}}
\def\>{{\rangle}}
\def\({{\Big(}}
\def\){{\Big)}}

\def\bx{{\mathbf{x}}}

\def\bt{\begin{theorem}}
\def\et{\end{theorem}}
\def\bl{\begin{lemma}}
\def\el{\end{lemma}}
\def\br{\begin{remark}}
\def\er{\end{remark}}
\def\bx{\begin{Example}}
\def\ex{\end{Example}}
\def\bd{\begin{definition}}
\def\ed{\end{definition}}
\def\bp{\begin{proposition}}
\def\ep{\end{proposition}}
\def\bc{\begin{corollary}}
\def\ec{\end{corollary}}

\def\cC{{\mathcal C}}

\def\cF{{\mathcal F}}

\def\cL{{\mathcal L}}

\def\mE{{\mathbbm E}}

\def\mP{{\mathbbm P}}

\def\mR{{\mathbb R}}

\def\mT{{\mathbb T}}

\def\mW{{\mathbb W}}

\def\sF{{\mathscr F}}

\def\eps{\varepsilon}

\def\geq{\geqslant}
\def\leq{\leqslant}

\def\bx{{\bf x}}
\def\R{\mathbb{R}}
\def\D{\mathbb{D}}

\def\1{{\mathbbm 1}}

\def\nin{\noindent}

\allowdisplaybreaks

\makeatletter
\@addtoreset{equation}{section}

\makeatother

\allowdisplaybreaks

\begin{document}

\title{Quantitative Homogenization of PDEs with Neumann boundary conditions: a probabilistic approach}
\author[author1]{Zhen-Qing Chen}
  \author[author2]{Jing Wu}
\dedicatory{ University of Washington and Sun Yat-sen University}
\thanks{E-mails: Z.-Q. Chen: zqchen@uw.edu; ~J. Wu: wujing38@mail.sysu.edu.cn}

\begin{abstract}
In this paper, we study quantitative homogenization   for  viscosity solutions of multi-scale  semilinear second order partial differential equations (PDEs) on convex domains with Neumann boundary conditions. 
To this aim we use the probabilistic approach by studying the quantitative homogenization of
backward stochastic differential equations (SDEs) associated with slow-fast systems of reflected SDEs.

\vskip 0,2truein

\noindent {\bf AMS 2020 Mathematics subject classification}: Primary 60F17,   60F15, 58E35;
 Secondary  34K33,  60H10.

\medskip\noindent
{\bf Keywords and phrases}:  Homogenization; averaging principle; forward SDE; backward SDE; linear second order PDE; semilinear second order PDE;  
Neumann boundary conditions

\end{abstract}

 \maketitle

\section{Introduction}\label{S:1}

Homogenization (or averaging)  is a mathematical technique used to rigorously derive an effective (homogenized) model that approximates the behavior of a system with rapidly oscillating, periodic coefficients. This is typically done in the limit as the oscillation scale $\eps \to 0$, where $\eps>0$ is a small parameter 
  representing the ratio between the microscopic and macroscopic scales.
 Here is a prototype of questions in the study of   homogenization for partial differential equations (PDEs) with periodic coefficients.  
Suppose that $A(x)=(a^{ij} (x))_{1\leq i, j\leq n}$ is a multivariate 1-periodic, $n\times n$ matrix-valued function on $\R^d$ that is bounded and uniformly elliptic. Let 
$$
 \cL f(x) := {\rm div} (A(x)\nabla f(x) ) = \sum_{i,j=1}^n \frac{\partial  }{\partial x^i}\left( a^{ij} (x) \frac{\partial f(x)}{\partial x^j} \right)
 $$
 be defined in the distributional sense, and 
 $\cL^{\eps} f(x) =  {\rm div} (A(x/\eps )\nabla f(x)) $ for $\eps >0$. 
Let $D\subset \R^n$ be a smooth domain and
consider the following PDE  on $D$ with suitable boundary conditions:   
\begin{equation}\label{pde}
  \left\{
    \begin{array}{lllll} 
\cL^{\eps, t, x} u_\eps(x) = - f(x) \quad \text{in } D, \\  
u_\eps(x)~~ \text{ satisfies some boundary conditions on } \partial D.
 \end{array} \right.
\end{equation}
Equation \eqref{pde} models a stationary process in an inhomogeneous material with rapidly oscillating microstructure, and $\eps > 0$ is a parameter measuring the inhomogeneity scale. It has been established in mathematics that under some suitable conditions on 
the periodic diffusion matrix $A$ and suitable boundary conditions, the solution $u_\eps$ of \eqref{pde} converges in a suitable sense  to some function $u$ on $\overline D$  that solves a homogenized PDE: 
\begin{equation}\label{pde0}
  \left\{
    \begin{array}{lllll} 
\widetilde{\cL}u = - f \quad \text{in } D, \\ 
u ~~ \text{ satisfies  some boundary conditions on } \partial D,
\end{array} \right.  
\end{equation}
where $\widetilde \cL$ is a second order elliptic differential operator having constant coefficients. 
 The homogenization result stated above shows that the inhomogeneous materials with rapidly oscillating microstructure may be approximately described via an effective homogeneous material.  There exist quite a few methods in studying homogenization problems, and among them we would like to mention asymptotic expansions, correctors, energy method. We refer the interested reader to the 
 classical book \cite{BLP} for a detailed accounts of these methods.

On the other hand, it is well known (see, e.g., \cite{CZ95}) 
that there is a continuous symmetric Feller process $X=(X_t, t\geq 0; \mP_x, x\in \R^d)$ on $\R^d$
having strong Feller property having $\cL$ as its infinitesimal generator.
For $\eps >0$, define $X^\eps_t = \eps X_{\eps^{-2}t}$. Then $X^\eps$ is a continuous symmetric Feller 
process on $\R^d$ having $\cL^\eps$ as its infinitesimal generator. One can run $X^\eps$ to solve 
PDE \eqref{pde}. For example,  the solution to the Dirichlet boundary value problem 
\begin{equation}\label{pde-D}
  \left\{
    \begin{array}{lllll} 
\cL^\eps u_\eps(x) = - f(x) \quad \text{in } D, \\  
u_\eps(x) = \varphi (x) \quad  \text{  on } \partial D.
 \end{array} \right.
\end{equation}
is given by  
\begin{equation}\label{e:pr}
u_\eps (x)= \mE_x \left[ \varphi(X^\eps_{\tau^\eps_D}) \right] + \mE_x \int_0^{\tau^\eps_D} f (X^\eps_s) ds, 
\quad x\in D,
\end{equation} 
where $\tau^\eps_D :=\inf\{t> 0:X^\eps_t \notin D\}$; see \cite{CZ95}. 
 A natural question arises: 
  does $X^\eps(t)$ converge   in the distributional sense as $\eps\to 0$? if so, what is the limit process?
 The problem was solved by Bensoussan, J.L. Lions and Papanicolaou in the classical book \cite{BLP} for  
 case when $A(x)$ is periodic in $\R^n$. The weak convergence of the diffusion processes $X^\eps$
provides a way to study homogenization problem for PDEs \eqref{pde} by using  probabilistic method via \eqref{e:pr}.

Tanaka \cite{Ta84} later studied periodic homogenization  for  diffusions with boundary conditions in half-spaces.
 He obtained the limit homogenized process
  under some
 smooth and periodic assumptions on the coefficients for  the diffusion process $X_t^\varepsilon$ on the half-space $D=\{x\in\mR^n, x_1 >0\}$ with generator
$$\cL_\varepsilon =\sum_{i,j=1}^n a^{ij}(\varepsilon^{-1}x)\frac{\partial^2}{\partial x^i\partial x^j} +{\varepsilon}^{-1}\sum_{i=1}^n b^{i}(\varepsilon^{-1}x)\frac{\partial }{\partial x^i}+\sum_{i=1}^n c^i(\varepsilon^{-1}x)\frac{\partial }{\partial x^i}
$$
{ subject to}
 the boundary condition:
$$
\Gamma_\varepsilon u (x):=\sum_{i,j=2}^n\alpha^{ij}(\varepsilon^{-1}x)\frac{\partial^2u (x) }{\partial x^i\partial x^j} +{\varepsilon}^{-1}\sum_{i= 2}^n\beta^i(\varepsilon^{-1}x)\frac{\partial u (x) }{\partial x^i} +\sum_{i=1}^n\gamma^i(\varepsilon^{-1}x)\frac{\partial u (x) }{\partial x^i}=0
 \quad \text{ on }\partial D.
$$

 The probabilistic representation can be extended to solutions of nonlinear PDEs as well. For example, consider the following semilinear PDE:
\begin{equation}\label{semipde0}
 \left\{
    \begin{array}{lllll}
\frac{\partial u^{\varepsilon}}{\partial t}(t, x)=\sum_{i,j=1}^n a^{i j}(\eps^{-1}x) \frac{\partial^{2} u^{\varepsilon}}{\partial x^i \partial x^j}(t, x)+\eps^{-1}\sum_{i=1}^n b^{i}(\eps^{-1}x)\frac{\partial u^{\varepsilon}}{\partial x^i}(t, x)+f\left(\eps^{-1}x, u^{\varepsilon}(t, x)\right),\\

u^{\varepsilon}(0, x)=g(x).
\end{array}\right.
\end{equation} 
For every $\eps>0$, assuming the above PDE has a solution $u^\eps$, then the solution can be represented by the solution of a backward SDE. {The research on} backward SDEs 
and their connection to non-linear PDEs has been pioneered by Pardoux and Peng \cite{PP90}, \cite{PP92}. Due to this connection, the homogenization problem of this type of PDEs can also be studied via the probabilistic representation and
 the homogenization of related backward SDEs; 
see,  for example, \cite{par99}, \cite{par-ver97}. 

In many applications, one often  encounters  asymptotic problems for systems having multi-scales.
 In this regards,  consider the deterministic system in $\mR^n$:
\be\label{ode}
d \tilde {X}_t^\varepsilon = \varepsilon b( \tilde X_t^\varepsilon, Y_t)dt, \quad \tilde X_0^\varepsilon = x,
\ee
where $ Y_t$ is {an $\mR^n$-valued function of $t\in [0, \infty)$,}
 $\varepsilon >0$ is a small numerical parameter.
 If $b$ is bounded, then $\tilde X^\eps_t$ moves at a speed no larger than $\eps \| b\|_\infty$.
 In this case,  $\tilde X^\eps_t$ converges to  the constant function $X_t\equiv x$  uniformly on every finite time interval $[0, T]$ as $\varepsilon \to 0$.
 The behavior of $\tilde X_t^\varepsilon$ on time intervals of order $\varepsilon^{-1}$ or of larger order is usually of {primary} interest. 
Thus we set  $X_t^\varepsilon := \tilde X_{t/\varepsilon}^\varepsilon$. 
Then $X^\eps$  satisfies
\begin{equation}\label{ode2}
 dX_t^\varepsilon = b(X_t^\varepsilon,  Y_{t/\eps} )dt  \quad \hbox{with } X_0^\varepsilon = x.  
  \end{equation}  
  The function $X^\eps$ is called a slow evolution as compared with the ``fast" motion $t\to  Y_{t/\eps}$.  
  The study of this system \eqref{ode2} on a finite time interval is equivalent to the study of
system \eqref{ode} on time intervals of order $\varepsilon^{-1}$. {Suppose that}  ${Y}$ is periodic, $b(x, y)$ is bounded, continuous, and satisfy
a Lipschitz condition in $x$ with the {Lipschitz} constant independent of $y$. Then the limit 
\begin{equation}\label{odebarb}
 \bar{b}(x) := \lim_{T \to \infty} \frac{1}{T} \int_0^T b(x,  {Y}_s) \, ds   
\end{equation}
exists for each $x \in \mR^n$. 
If the convergence in \eqref{odebarb} is uniform in $x$, then one can show that the trajectory $ { X^\varepsilon}$ converges to $\bar{x}_t$, uniformly on every finite time interval as $\varepsilon \to 0$, where $\bar{x}$ is determined by the equation:
$$
d{\bar{x}}_t = \bar{b}(\bar{x}_t)dt, \quad \bar{x}_0 = x.
$$

{The property that   the trajectory $X^\varepsilon$ of \eqref{ode2} converges to $\bar{x}$ as $\eps \to 0$
 is called the averaging principle.} {Note that for  $Y^\eps_t := Y_{t/\eps}$, then $dY^\eps_t = \eps^{-1}\dot  Y_{t/\eps} dt$.
   If the dynamic system \eqref{ode2} is subject to random noises, one has the following form of random dynamic system: 
\be \label{e:1.9}
\left\{
    \begin{array}{lllll}
      d X_t^{\eps}=\sigma(X_t^{\eps},Y_t^{\eps})d W_t+b(X_t^{\eps},Y_t^{\eps})d t \hskip 0.9truein \hbox{with } X_0^{\eps}=x\in\mR^n, \\
      d Y_t^{\eps}=\e^{-1/2}\sigma^{(2)}(X_t^{\eps},Y_t^{\eps})dB_t+\e^{-1}b^{(2)}(X_t^{\eps},Y_t^{\eps})d t
      \quad \hbox{with  } Y_0^{\eps}=y\in\mR^n,
     \end{array}
  \right.
\ee
where $B$ and $W$ are two independent standard Brownian motions on $\R^n$, 
{ and $\sigma (x, y)$ and $\sigma^{(2)} (x, y)$ are two $n\times n$ matrices on $\R^n\times \R^n$.}
The interest is to study the limit behavior of the slow process $X^\eps$ as $\eps \to 0$. 
The solution processes $(X^\eps, Y^\eps)$ of the above  random dynamic system can be used to solve the  the following second order parabolic PDEs;
that is, for a bounded continuous function $f(x, y)$ defined on $\R^n \times \R^n$, $u^\eps (t, x, y):= \mE^{(x, y)} [ f(X^\eps_t, Y^\eps_t)]$ solves 
\be \label{e:1.10} 
\left\{
    \begin{array}{lllll}
\frac{\partial  }{\partial t} u^\eps = \big( \cL_1+ \eps^{-1} \cL_2\big)    u^\eps,\\
\\
 u^\eps(0,x,y)=f(x),
 \end{array}
  \right.
  \ee
where  for $(x, y)\in \R^n\times \R^n$, 
\begin{eqnarray*}
\cL_1 \varphi (x, y) &:=&  \sum_{i, j=1}^n a^{ij}(x, y) \frac{\partial^2 \varphi (x, y)}{\partial x^i \partial x^j} + \sum_{i=1}^n b^i(x, y) \frac{ \partial \varphi (x, y)}{\partial x^i}, \\
\cL_2 \varphi (x, y)  & :=&  \sum_{i, j=1}^n a^{(2),ij}(x, y) \frac{\partial^2 \varphi (x, y)}{\partial y^i \partial y^j}
 + \sum_{i=1}^n b^{(2),i}(x, y) \frac{ \partial \varphi (x, y)}{\partial y^i}.
\end{eqnarray*}
Here $(a^{ij}(x, y))_{1\leq i, j\leq n}= \frac12\sigma (x, y) \sigma (x, y)^*$ and $(a^{(2),ij}(x, y))_{1\leq i, j\leq n}= \frac12\sigma^{(2)} (x, y) \sigma^{(2)} (x, y)^*$,
{ where $\sigma^*$  denote the adjoint matrix of  $\sigma$.}
In this way the homogenization for the stochastic system \eqref{e:1.9} can be used to study the homogenization of the PDE \eqref{e:1.10}. 
Averaging principle of multi-scale stochastic systems has  attracted much attention in recent decades. To investigate the averaging problem for SDEs and SPDEs, there are several effective methods, including the time discretization method, Poisson equation (also called the corrector's method),
 asymptotic expansion method, and there are fruitful results in this respect; see, for example, 
\cite{BC}, \cite{ce09},  \cite{cf09}, \cite{dsz18}, \cite{fl11}-\cite{HLL1}, \cite{KY04}, \cite{RSX}, \cite{R-X211}, \cite{R-X21}, \cite{WY20}, \cite{Xu}.
There exists now a large amount of literature on homogenization results concerning the asymptotic problem of SDEs and PDEs from both the probabilistic and the analytic point of views.  Among the pioneering work in this area are \cite{h68}, \cite{Ko78}, \cite{PV81}, \cite{PV82}. 

\medskip

 Recently, the authors \cite{C-W22}  studied 
qualitative averaging principle for stochastic variational inequalities and applied it to
get the qualitative homogenization result for multi-scale semilinear second order partial differential equations PDEs with nonlinear Neumann conditions. 
The purpose of this paper to study quantitative homogenization  for two classes of multi-scale  semilinear second order partial differential equations on convex domains with Neumann boundary condition. 
We use probabilistic approach by first studying the quantitative homogenization of 
backward SDEs associated with  slow-fast systems of reflected SDEs.  {For the  multi-scale  semilinear second order partial differential equations on smooth 
  bounded domains with nonlinear Neumann boundary condition, we apply a class of generalized backward SDEs coupled with forward reflected SDEs and establishes a strong convergence rate  for the homogenization. For second order semilinear PDEs on unbounded convex domains with zero Neumann boundary conditions, we obtain a homogenization convergence rate $\epsilon^{1/4}$.} These quantitative homogenization results are new  both  
in probability theory and in PDE.

 \medskip
  
  The rest of the paper is organized as follows.
 Section \ref{S:2} presents a quick survey  on averaging principles of multi-scale forward stochastic dynamical systems on Euclidean spaces and 
their connection to the homogenization of systems of semilinear parabolic PDEs. 
Section \ref{S:3} reviews some homogenization results for multi-scale reflected SDE system on  nonempty closed convex domains  
 in $\R^n$.  
 Quantitative homogenization results of
backward SDEs associated with   slow-fast systems of reflected SDEs on convex domains are studied in Section \ref{S:4}.
These results are used to get the quantitative  homogenization result for viscosity solutions  of
  multi-scale  semilinear second order partial differential equations on smooth 
  bounded convex domains with nonlinear Neumann boundary condition in Section \ref{S:4.2},
   and on general unbounded convex domains with zero Neumann boundary condition in Section \ref{S:4.3}. 
 
 \medskip
 
  In this paper, we use := as a way of definition.
For a subset  $D\subset \R^n$, 
 we denote by $\cC([0,+\infty); D)$ the space of continuous functions 
  defined on $[0,\infty)$ taking values in $D$,
 and by  $\mW^{2,p}(D)$ the usual Sobolev space   $D$ of order $(2, p)$ with $p\geq 1$.

\section{Averaging principle of multiscale SDEs and homogenization of related PDEs} \label{S:2}

\subsection{Averaging principle of multiscale SDEs}   

In this section we discuss the asymptotic behavior of stochastic systems involving multi-time-scale diffusion processes.  
For every $x, y\in \R^n$, 
we consider  the following  stochastic system with both slow process $X^{\eps, x, y}$ and fast process
 $Y^{\eps, x, y}$ as in \eqref{e:1.9}: 
\begin{equation}\label{eq_fast_slow_components}
  \left\{
    \begin{array}{lllll}
      d X_t^{\eps;x,y}=\sigma(X_t^{\eps;x,y},Y_t^{\eps;x,y})d W_t+b(X_t^{\eps;x,y},Y_t^{\eps;x,y})d t
      \quad \hbox{with } X_0^{\eps;x,y}=x\in\mR^n, \\
\\
      d Y_t^{\eps;x,y}=\e^{-1/2}\sigma^{(2)}(X_t^{\eps;x,y},Y_t^{\eps;x,y})dB_t+\e^{-1}b^{(2)}(X_t^{\eps;x,y},Y_t^{\eps;x,y})d t  
       \quad \hbox{with }  Y_0^{\eps;x,y}=y\in\mR^n.
     \end{array}
  \right.
\end{equation}
Here $b, ~b^{(2)}:\mR^n\times\mR^n\rightarrow\mR^n, ~\sigma, ~\sigma^{(2)}:\mR^n\times\mR^n\rightarrow \mR^{n\times {m}}$
 are measurable functions, $(W_t)_{t\geq0}$ and $(B_t)_{t\geq0}$   are two independent $n$-dimensional standard Brownian motions on a filtered probability space $(\Omega, \sF, \mP,
 \{\cF_t\}_{t\geq0} )$. 
Such multi-scale models  have been used to describe many dynamical systems in which not all components
evolve at the same rate. For example,  they are used  to describe stochastic volatility in finance (\cite{FFK12}), climate weather interactions (see \cite{kifer1}, \cite{kifer2}),  intracellular biochemical reactions (\cite{KK13}, \cite{kuehn}), and so on. 

The generator corresponding to SDE \eqref{eq_fast_slow_components} is $\cL_1+\eps^{-1} \cL_2$ on $\R^n\times \R^n$, where
for $(x, y)\in \R^n\times \R^n$, 
\be\label{LL2}
&&\cL_1:= \sum_{i, j=1}^n a^{ij}(x,y)\frac{\partial^2}{\partial x^i\partial x^j} +\sum_{i=1}^n b^i(x,y)\frac{\partial}{\partial x^i},\nonumber\\
&&\cL_2:= \sum_{i, j=1}^n a^{(2), ij}(x,y)\frac{\partial^2}{\partial y^i\partial y^j} +\sum_{i=1}^n b^{(2),i}(x,y)\frac{\partial}{\partial y^i}.
\ee
Here similar to {\eqref{e:1.10}}, $a(x, y)=\frac12\sigma \sigma^* (x, y)$ and $a^{(2)}(x, y)=\frac12\sigma^{(2)}  {\sigma^{(2)}}^* (x, y)$.

 To handle  the different time scales and the cross interactions between the fast and
slow variables  in   \eqref{eq_fast_slow_components}, in general, 
 for each $x, y\in \R^n$ we need to consider an auxiliary process  $Y^{x, y}_t$, 
  which is the solution of the following  SDE:
\be\label{Yx}
dY^{x, y}_t = b^{(2)}(x, Y^{x, y}_t)\mathrm{d}t + \sigma^{(2)}(x, Y^{x, y}_t)\mathrm{d}B_t
\quad \hbox{with } Y^{x, y}_0=y.
\ee

The following theorem gives an averaging principle in the weak convergence sense. It
 can be proved by applying the time discretization method combined with the martingale problem. 
 We refer the readers to \cite[Section 4.3]{Freidlin}, \cite[Chapter 7]{fw84} and \cite{par-ver01}, \cite{par-ver03} for its  proof. 
 
\bt\label{averaging1}
Assume the coefficients in \eqref{eq_fast_slow_components} are twice continuously differentiable and bounded, and 
 there exists  $\bar{b} (x) $ and $\bar{a} (x)=(\bar{a}^{ij} (x))$ so  that
 for any $\delta>0$ {and $x, y\in \R^n$, 
\be\label{avecff} 
&&\lim_{T\to\infty}\mP\left(\left|\frac{1}{T} \int_{t}^{t+T} b(x, Y_s^{x, y}) \, ds - \bar{b}(x)\right|>\delta\right)=0,\nonumber\\
&&\lim_{T\to\infty}\mP\left(\left|\frac{1}{T} \int_{t}^{t+T} a^{ij}(x, Y_s^{x, y}) \, ds - \bar{a}^{ij}(x)\right|>\delta\right)=0
\quad\hbox{for every } 1\leq i, j\leq n. 
\ee
Then the process $X^{\eps;x,y}$ converges weakly  in $\cC([0,\infty);\mR^n)$ as $\eps \to 0$ to the diffusion process $\bar{X}^x$, 
 which is the unique solution of the SDE independent of $y\in \R^n$:
 }
\be\label{barx}
d\bar{X}_t=  \bar{\sigma}(\bar{X}_t)dW_t + \bar{b}(\bar{X}_t)dt, \quad \bar{X}_0=x.
\ee
If $\sigma(x,y)=\sigma(x)$ does not depend on the fast variable, then $X^{\eps;x,y}$ converge in probability 
 in $\cC([0,\infty);\mR^n)$ as $\eps \to 0$ to the process $\bar{X}^x$.
\et

The above averaging result combined with the probabilistic representation as in \eqref{e:pr} yields the following homogenization of the following Cauchy problem
 for  $u^\eps (t, x, y)$ on $[0, \infty)\times \R^n \times \R^n$:
 \be\label{CP} \begin{cases}
\frac{\partial  }{\partial t} u^\eps =(\cL_1+ \eps^{-1} \cL_2)u^\eps,\\
u^\eps(0,x,y)=f(x).
\end{cases}
\ee

\bt\label{HMCP} {\rm (\cite[Section 4.3]{Freidlin})}
Under the assumptions of Theorem \ref{averaging1}, for every bounded continuous $f$ on $\R^n$,
the solution $u^\eps (t, x, y)$ of \eqref{CP} converges as $\eps \to 0$ pointwise on $[0, \infty)\times \R^n \times \R^n$
to the solution ${u(t, x)}$ of the following   Cauchy problem:
\ce\begin{cases}
\frac{\partial u}{\partial t}=\bar{\cL}u,\\
 u(0,x)=f(x),\end{cases}
\de
where  $\bar{\cL} = \sum_{i, j=1}^n \bar a^{ij} (x) \frac{\partial^2}{\partial x^i \partial x^j} + \sum_{i=1}^n \bar b^i (x)  \frac{\partial}{\partial x^i } $.
\et

 The conditions on the coefficients in Theorems \ref{averaging1} and \ref{HMCP} can be relaxed. For example, suppose that in \eqref{eq_fast_slow_components}, $a^{(2)}(x,y)$ is non-degenerate in $y$ uniformly with respect to $x$, and there exist constants $\lambda\in\R$, $p>2$ and $C, C_1, C_2>0$, for every $x\in\R^n$, $b^{(2)}(x,\cdot), ~\sigma^{(2)}(x,\cdot)$ satisfy that
\ce
&&2\langle y-y', b^{(2)}(x,y)-b^{(2)}(x,y')\rangle+\|\sigma^{(2)}(x,y)-\sigma^{(2)}(x,y')\|^2\leq \lambda|y-y'|^2(1\vee\log|y-y'|^{-1}), \\
&&2\langle y, b^{(2)}(x,y)\rangle+\|\sigma^{(2)}(x,y)\|^2\leq -C_1|y|^p+C_2, ~~{ \|\sigma^{(2)}(x,y)\|^2\leq C(1+|y|)}.
\de 
According to \cite{ZXC}, then 
for each $x\in \R^n$, the diffusion process $Y^{x, y}$ of \eqref{Yx} is exponentially ergodic and
 has a unique  invariant probability measure $\pi^x$ { that is independent of $y\in \R^n$}. 
 Consequently, the condition \eqref{avecff} of  Theorem \ref{averaging1} is satisfied with 
 \begin{equation}\label{e:4.7a}
 \bar{a}(x):=\int_{\mathbb{R}^{n}} a(x,y)  \pi^x (\mathrm{d}y)  
\quad \hbox{and} \quad 
\bar{b}(x):=\int_{\mathbb{R}^{n}} b(x,y)  \pi^x (\mathrm{d}y).
\end{equation}
Assume in addition that $b^{(2)}, ~\sigma^{(2)}$ are Lipschitz continuous in $x$ uniformly in $y$, and $b, ~\sigma$  are Lipschitz continuous in both $x$ and $y$. Then by an argument from \cite[Section 4.3]{Freidlin} and \cite[Chapter 7]{fw84}, 
 the conclusions of Theorems \ref{averaging1} and \ref{HMCP} still hold.

Recently  R\"ockner and Xie \cite{R-X21} considered  strong convergence of averaging principle for a multi-scale system which is more {general} than \eqref{eq_fast_slow_components}
with less regular coefficients. They used  the Zvonkin transformation combined with the Poisson (or, corrector's) equation method. In particular, 
they obtained an optimal strong convergence rate of the homogenization as a corollary of the main result in \cite{R-X21}, which we state below.

\bt\label{strongaveraging} {\rm (\cite{R-X21})}
 Suppose $a^{(2)}(x,y)$ is non-degenerate in $y$ uniformly with respect to $x$, and  
$$
\lim_{|y|\rightarrow\infty} \sup_{x\in \R^n}  \langle y, b^{(2)}(x, y) \rangle = -\infty.
$$
Suppose also that $a(x,y)=a(x)$ (that is, $a$ does not depend on the fast variable) and  is non-degenerate in $x$, 
 $b$, $b^{(2)}$,  $\sigma^{(2)}$ are both bounded, $\theta$-H\"older continuous in $x$, $\delta$ -H\"older continuous in $y$ for $ \theta, \delta > 0$, and $\sigma\in\cC_{b}^{1}$.
 Then for every $x, y\in \R^n$, 
 any $T >0$ and every  $p\geq1$,
$$
\sup_{t\in[0,T]} \mathbb{E} \left[ |X_{t}^{\eps, x, y} - \bar{X}^x_{t}|^{p}  \right] \leqslant C_{T}\eps^{(\theta\wedge1)p/2},
$$
where $\bar X^x$ is the diffusion given by \eqref{barx}. 
\et

\subsection{Homogenization of systems of semilinear parabolic PDEs}

Backward SDEs decoupled with 
 multiscale forward SDEs of the form like
 \eqref{eq_fast_slow_components} is also an effective   probabilistic machinery for the study of homogenization problems of some  semilinear parabolic PDEs.  
 Consider the following system of semilinear PDEs  on Euclidean spaces:
  for $k = 1, \ldots, m$,
\be\label{pde-bsde}
\begin{cases}
\frac{\partial u_k^\varepsilon}{\partial t} + \cL_1 u_k^\varepsilon + \eps^{-1} \cL_2 u_k^\varepsilon + f_k \left( x, y, u^\varepsilon, \left( \partial_x u^\varepsilon \sigma(x, y), \varepsilon^{-1/2}\partial_y u^\varepsilon {\sigma^{(2)}(x, y)} \right) \right) = 0,\\
u_k^\varepsilon (T, x, y) = h_k(x).
\end{cases}
\ee
Here $\cL_1, ~\cL_2$ are the differential operators given in \eqref{LL2}, and $f=(f_1,\ldots, f_m): \mR^n\times\mR^n\times \mR^m\times {(\mR^{m\times  n}, \mR^{m\times  n})}\to \mR^m$ and $h=(h_1,\ldots, h_m): \mR^n\to\mR^m$ are Borel measurable functions.
 For each
 $x, y\in \R^n$, $\eps >0$ and $0\leq t<T$, 
consider the following backward SDE: 
\be\label{bsde-pde}
{U^{\eps; t, x,y}_s}=h(X^{\eps;t,x,y}_T)+\int_s^T f(X_r^{\eps;t,x,y}, Y_r^{\eps;t,x,y}, U^{\eps; t, x,y}_r , Z_r^{\eps; t, x,y} ) dr - \int_s^T 
Z_r^{\eps; t, x,y} d\tilde{W}_r,
\quad  s\in[t,T],
\ee
Here $\tilde{W}_s=(W_s, B_s)^\top$, where $W$ and $B$ are two independent $n$-dimensional standard Brownian motions on a complete probability space $(\Omega, \sF, \mP)$,
and $(X^{\eps;t,x,y}, Y^{\eps;t,x,y})$ is a solution of the following multiscale SDE system {for $s\in [t, T]$}, 
 \be\label{e:4.9}
 \left\{
    \begin{array}{lllll}
      d X_s^{\eps;t,x,y}=\sigma(X_s^{\eps;t, x,y},Y_s^{\eps;t,x,y})d W_s+b(X_s^{\eps;t, x,y},Y_s^{\eps;t,x,y})d s , \\
      d Y_s^{\eps;t,x,y}=\e^{-1/2}\sigma^{(2)}(X_s^{\eps;t,x,y},Y_s^{\eps;t,x,y})dB_s+\e^{-1}b^{(2)}(X_s^{\eps;t,x,y},Y_s^{\eps;t,x,y})d s ,
           \end{array}
  \right.
\ee
with $X_t^{\eps;t,x,y}=x\in\mR^n$ and $Y_t^{\eps;t,x,y}=y\in\mR^n$. 
When all the coefficients $b, b^{(2)}, \sigma, \sigma^{(2)}, ~f$ and $h$ are  bounded and {$C^\infty$-smooth with bounded derivatives}.  
It is well known that  for every $\eps >0$, $x, y\in \R^n$ and $0\leq t<T$, 
 \eqref{bsde-pde} and \eqref{e:4.9} admit a unique strong solution $\{(X^{\eps;t,x,y}_s, Y^{\eps;t,x,y}_s, U^{\eps; t,x,y}_s, Z^{\eps; t,x,y}_s); 
s\in [t, T]\}$
with the minimum augmented filtration $\{\cF^{t, T}_s\}_{s\in [t, T]}$ generated by $\{\tilde W_s -\tilde W_t; s\in [t, T]\}$.
So $U^{\eps, t, x,y}_t$ is deterministic. 
 In addition, it is established in  \cite{PP90}, \cite{PP92} that
\begin{equation}\label{e:4.10}
u^{\eps}(t, x, y):=U^{\eps; t, x,y}_t
\end{equation}
 gives the unique classical solution to 
 the semi-linear equation \eqref{pde-bsde}.

Suppose in addition that $a$ and $a^{(2)}$  are uniformly elliptic, and $b, b^{(2)}, \sigma$ and $ \sigma^{(2)}$ are multivariate periodic in $y$-variable with period 1. Then for each $x\in \R^n$, the unique strong solution  $Y^x$ to 
 $$ d Y_s^{x}= \sigma^{(2)} (x,Y_s^x)dB_s+ b^{(2)}(x,Y_s^x)d s, \quad s\geq 0,
 $$
  can be viewed as a diffusion process  which is ergodic and has a unique invariant probability measure $\pi^x$ on $\mT^n$. 
Define $\bar a$ and $\bar b$ as in \eqref{e:4.7a}, and for $k=1,\cdots, m$, 
$$
\bar{f}_k (x,u, z):=\int_{\mT^n} f_k (x,y, u, (z\sigma(x,y),0))\pi^x(dy)
\quad \hbox{for } (x, u, z) \in \mR^n \times \mR^{m}\times \mR^{m\times n}. 
$$
Let 
$$
\bar{\cL} := \sum_{i, j=1}^n \bar a^{ij} (x) \frac{\partial^2}{\partial x^i \partial x^j} + \sum_{i=1}^n \bar b^i (x)  \frac{\partial}{\partial x^i } 
\quad \hbox{on } \R^n.
$$
Using the probabilistic representation \eqref{e:4.10},
 Briand and Hu  \cite{BH99}  obtained
   the following  homogenization result for \eqref{pde-bsde}.

\bt\label{pde-bsde3} {\rm (\cite[Theorem 2.4]{BH99})}
Suppose that $b, b^{(2)}, \sigma, \sigma^{(2)}, f$ and $h$ are { $C^\infty_b$-smooth and multivariate periodic in $y$-variable with period $1$}, 
and  $a$ and $a^{(2)}$  are uniformly elliptic. Let $T>0$ and $u$ be the classical solution of the following PDE system:  
\begin{align}\label{pde-bsde31}
\begin{cases}
\frac{\partial u_k}{\partial t}+\bar{\cL}u_k+\bar{f}_k(x, u,\nabla_x u)=0,\\
u_k(T,x)=h_k(x),
\end{cases}
\quad \hbox{for } ~k=1,\cdots, m.
\end{align}
and $u^\eps$ be the classical solution  of  \eqref{pde-bsde}. Then 
 for every $(t,x,y)\in[0,T]\times\mR^n\times\mR^n$, 
$$
\lim_{ \eps \to 0 } u_k^{\eps}(t,x,y) =  u_k(t,x)
\quad \hbox{for } k=1,\cdots, m.
$$
\et
{In a very recent work, Sheng et al \cite{SWYZ} have relaxed the conditions on the coefficients to Lipschitz continuity and extended the above homogenization result to the viscosity solution for \eqref{pde-bsde} and obtained a strong convergence rate.}

\section{Averaging principle of multiscale SDEs reflected in convex domains}  \label{S:3}

In this section  we review the  averaging principle and the rate of strong convergence recently obtained by the authors in \cite{C-W22}
for  multi-scale  fully coupled reflected stochastic systems on  closed convex domains in $\R^n$.
The results obtained there are in a more general framework of stochastic variational inequalities.
To present them in a transparent way,  in this section we will  take the 
lower-semicontinuous convex functions $\varphi$ and $\psi$ used in the formulation of 
stochastic variational inequalities in \cite{C-W22} to be the indicator functions of closed convex domains in $\R^n$
to reduce them to the  more conventional setting of reflected SDEs. 
Let $\bar D_1$ and $\bar D_2$ be two nonempty closed convex domains   in $\R^n$.
Consider the following multi-scale fully coupled reflected SDE system for $(X^{\eps, x, y}, Y^{\eps, x, y})$
taking values in $\bar D_1 \times\bar D_2$:
\begin{equation}\label{RSDE1}
  \left\{
    \begin{array}{lllll}
      d X_t^{\eps;x,y}=\sigma(X_t^{\eps;x,y},Y_t^{\eps;x,y})d W_t+b(X_t^{\eps;x,y},Y_t^{\eps;x,y})d t
      { + } 
      \mathbf{n} (X_t^{\eps;x,y}) d\phi_t^{\eps;x,y},
          \medskip   \\
      d Y_t^{\eps;x,y}=\eps^{-1/2} \sigma^{(2)}(X_t^{\eps;x,y},Y_t^{\eps;x,y})dB_t+ \eps^{-1} b^{(2)}(X_t^{\eps;x,y},Y_t^{\eps;x,y})d t
      { + } 
      \mathbf{n}^{(2) (Y_t^{\eps;x,y})}d\phi_t^{(2),\eps;x,y}, 
         \end{array}
  \right.
\end{equation}
with $X_0^{\eps;x,y}=x\in\bar{D}_1$ and $Y_0^{\eps;x,y}=y\in\bar{D}_2$, 
  where $W$ and $B$ are two independent $n$-dimensional standard Brownian motion on a complete probability space $(\Omega, \cF, \mP)$ with the filtration  $\{\cF_t\}_{t\geq0}$ generated by $\{\tilde{W}_t, t\geq0\}$, where  $\tilde{W}_t:=\left(W_t, B_t\right)^\top$.
  Here  $X_t^{\eps;x,y} \in \bar D_1$ and $Y_t^{\eps;x,y}\in \bar D_2$ { for all $t\geq 0$,}
  $\mathbf{n}(x)$ and $\mathbf{n}^{(2)}(y)$ are  the unit inward normal vectors at $x\in\partial D_1$ and at $y\in\partial D_2$, respectively,
   $\phi_t^{\eps;x,y}$ and $\phi_t^{(2),\eps;x,y}$ are real-valued continuous increasing processes 
with initial value $0$ that increase only when $X_t^{\eps;x,y} \in \partial D_1$ and $Y_t^{\eps;x,y}\in \partial D_2$,  respectively. 

We consider the following conditions. 
\begin{enumerate}
\item[\textbf{(A.1)}]   There exists a constant $c_1>0$ such that for any $x, ~y\in\mR^d$,
\be\label{linear}
  |b(x,y)|+\|\sigma(x,y)\|\leq c_1(1+|x|+|y|).
\ee
    For any $x, ~x'\in\mR^d$ and any $~y\in\mR^d$, there exists a constant $C_1>0$  such that
    \ce
    &&|b(x,y)-b(x',y)| {+}
    \|\sigma(x,y)-\sigma(x',y)\|\leq C_1|x-x'|.\\
    &&|b^{(2)}(x,y)-b^{(2)}(x',y)|+\|\sigma^{(2)}(x,y)-\sigma^{(2)}(x',y)\|\leq C_1|x-x'|.
   \de

\item[\textbf{(A.2)}]  $b^{(2)}, ~\sigma^{(2)}$ are bounded and there exist constants $\lambda>0$, and $C_2>0$
\ce
&&|b(x,y)-b(x,y')| {+}
    \|\sigma(x,y)-\sigma(x,y')\|\leq C_2|y-y'|.\\
&&\<y_1-y_2, b^{(2)}(x,y)-b^{(2)}(x,y')\>+\|\sigma^{(2)}(x,y)-\sigma^{(2)}(x,y')\|^2\leq -\lambda|y-y'|^2.
\de

\item
[\textbf{(A.3)}] ~~
  $a^{(2)}(x,y)$ is nondegenerate in $y$ uniformly in $x$, and there exist constants $\lambda\in\R$, $p>2$ and $C_1, C_2, C_3>0$ such that for all $x\in\R^n$, \ce
2\langle y, b^{(2)}(x,y)\rangle+\|\sigma^{(2)}(x,y)\|^2\leq -C_1|y|^p+C_2,\quad 
\|\sigma^{(2)}(x,y)\|\leq C_3(1+|y|).
\de 
\end{enumerate}

 The following result on the existence and uniqueness of the solution to the coupled reflected SDE system \eqref{RSDE1}  
as well as its averaging principle is established  in  \cite[Theorem 3.2]{C-W22} as its special case.

\begin{theorem}\label{HM_RSDE_W}
 Suppose that  \textbf{(A.1)}-\textbf{(A.2)} and the following \textbf{(A.4)} hold: 

\begin{enumerate}
\item[\rm \textbf{(A.4)}]
  There are functions $\bar a^{ij}(x)$ and $\bar b^i(x)$ for $1\leq i, j\leq n$ so that
 for any $T>0$, there is a constant $\beta (T)>0$ with $ \lim_{T\rightarrow\infty}\beta(T)=0$
so that
  \ce
      &&\Big| \frac{1}{T} \mE\int_t^{t+T}b^i(x,Y_s^{1;x,y})d s-\bar{b}^i(x) \Big| \leq\beta(T)(1+|x|+|y|),\\
 && \Big| \frac{1}{T}\mE\int_t^{t+T}a^{ij}(x,Y_s^{1;x,y})d s-\bar{a}^{ij}(x) \Big|\leq\beta(T)(1+|x|+|y|),
\de
where $a(x,y)=\frac12\left(\sigma \sigma^*\right)(x, y)$, and $Y^{1;x,y}$ is given by:
\be\label{aveYequa}
 d Y_t^{1;x,y}= \sigma^{(2)}(x,Y_t^{1; x,y})dB_t+ b^{(2)}(x,Y_t^{1;x,y})d t
 { + } 
  \mathbf{n}(Y_t^{1; x,y}) d\phi_t^{1;x,y}, \quad Y_0^{1;x,y}=y\in\bar{D}_2.
\ee
\end{enumerate}
Then for every $\eps>0$ and $x, y\in \R^n$,
  \eqref{RSDE1} has a unique strong solution. Moreover, the sequence $\{X_t^{\eps;x,y}\}$ converges 
  weakly in $C([0, \infty); \R^n) $ as $\eps\downarrow0$ to the process $\bar{X}_t^x$ given by
    \begin{equation}\label{eq_average_overline_X_t_x}
  d\bar{X}_t^x=\bar{\sigma}(\bar{X}_t^x)d W_t+\bar{b}(\bar{X}_t^x)d t 
  { + } 
  \mathbf{n}(\bar{X}_t)d\bar{\phi}_t
  \quad \hbox{with } \bar{X}_0^x=x\in\bar{D}_1.
\end{equation}
\end{theorem}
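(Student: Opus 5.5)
Well-posedness and weak convergence will be treated separately; the argument follows the Khasminskii time-discretization scheme adapted to the reflected setting.

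\emph{Existence and uniqueness.} For fixed $\eps>0$, \eqref{RSDE1} is a reflected SDE on the closed convex set $\bar D_1\times\bar D_2$. Its drift and diffusion coefficients are globally Lipschitz in $(x,y)$, with constants depending on $\eps$, by \textbf{(A.1)}--\textbf{(A.2)}, and they have linear growth. The key tool is the convexity inequality $\langle x-x', \mathbf n(x)\rangle\,d\phi_t\le 0$ for $x'\in\bar D_1$, and its analogue for $\mathbf{n}^{(2)}$. With it I will derive the a priori estimate $\mE\sup_{t\le T}(|X_t|^2+|Y_t|^2)<\infty$ and the Gronwall contraction estimate. Existence then follows either from Picard iteration combined with the Skorokhod map, which is Lipschitz on convex domains in the sense of Tanaka, or from the penalization method. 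Pathwise uniqueness plus weak existence yields a strong solution.

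\emph{Uniform estimates and tightness.} I will show $\sup_{\eps}\mE\sup_{t\le T}|X^\eps_t|^p<\infty$ and $\sup_{\eps,t\le T}\mE|Y^\eps_t|^2<\infty$. The second uses the dissipativity in \textbf{(A.2)}: applying It\^o to $|Y^\eps_t-y_0|^2$ with $y_0\in\bar D_2$ fixed produces the term $-\lambda\eps^{-1}|Y|^2$, which absorbs the $\eps^{-1}$ factors, and the reflection term has the right sign. For tightness, it suffices to bound $\mE|X_t-X_s|^4\le C|t-s|^2$ together with the modulus of continuity of $\phi^\eps$. This follows from the Skorokhod map being Lipschitz, i.e.\ continuous in the uniform norm, on convex domains. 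Hence $(X^\eps,\phi^\eps)$ is tight in $C$.

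\emph{Identification of the limit.} Split $[0,T]$ into intervals of length $\delta=\delta(\eps)$ with $\delta\to0$ and $\delta/\eps\to\infty$. On each interval $[k\delta,(k+1)\delta]$, freeze the slow variable at $X^\eps_{k\delta}$ and let $\hat Y$ solve \eqref{aveYequa} with $x=X^\eps_{k\delta}$, time rescaled by $\eps$, started from $Y^\eps_{k\delta}$. The Lipschitz property and the dissipativity give
\[
\mE|Y^\eps_t-\hat Y_t|^2\le C\delta(1+\cdots).
\]
Next, \textbf{(A.4)} applied conditionally, via the Markov property at $k\delta$ with $T=\delta/\eps\to\infty$, gives
\[
\mE\Big|\int_0^t \big(b(X^\eps_s,Y^\eps_s)-\bar b(X^\eps_s)\big)\,ds\Big|\to0,
\]
and likewise for $a$. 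One point needs care here: \textbf{(A.4)} controls only expectations. I therefore intend to use it inside the martingale problem, through $\mE[\Phi(X^\eps_{\cdot\wedge s})\int_s^t(\cdots)dr]$, rather than pathwise. This requires regularity of $\bar a,\bar b$, which I obtain from the $x$-Lipschitz continuity of $Y^{1;x,y}$ in $x$ (exponential contraction from \textbf{(A.2)}).

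Any limit point $(\bar X,\bar\phi)$ then satisfies the submartingale problem of Stroock--Varadhan type for $\bar{\mathcal L}$ with normal reflection. Concretely, for $f\in C^2_b$ with $\langle\nabla f,\mathbf n\rangle\ge0$ on $\partial D_1$, the process $f(\bar X_t)-\int_0^t\bar{\mathcal L}f(\bar X_s)ds$ is a submartingale. Alternatively, I will show directly that $\bar X_t-\int\bar b\,ds-\bar\phi$-term is a martingale with quadratic variation $\int 2\bar a\,ds$, and that $\bar\phi$ increases only on $\partial D_1$; this passes to the limit by the continuity of the Skorokhod problem. Finally, uniqueness in law for \eqref{eq_average_overline_X_t_x} follows from pathwise uniqueness, provided $\bar\sigma=(2\bar a)^{1/2}$ is Lipschitz, and this identifies the limit.

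I expect the main obstacle to be this last identification: passing the reflection term to the limit while \textbf{(A.4)} gives only averaged, not pathwise, control. Handling it through the submartingale formulation is the step I would attempt first.
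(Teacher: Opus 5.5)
The paper gives no argument for this theorem. It imports it as the special case $\varphi=I_{\bar D_1}$, $\psi=I_{\bar D_2}$ of \cite[Theorem 3.2]{C-W22}, which is stated for stochastic variational inequalities with general subdifferential terms. The paper later reuses \cite[Propositions 3.4 and 4.1]{C-W22} for exactly your block-frozen process $\hat Y$, so the cited proof appears to have the same Khasminskii skeleton as yours. What you do differently is exploit normal reflection. Tightness of $X^\eps$ and the limit of the reflection term reduce to tightness of the driver $x+\int_0^\cdot b\,ds+\int_0^\cdot\sigma\,dW$ plus continuity of the Skorokhod map. For a general subdifferential, the limiting bounded-variation term must instead be identified through monotonicity of $\partial\varphi$. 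Your route is more elementary here; the cited one covers general convex $\varphi,\psi$.

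The step that does not close as written is uniqueness of the limit. You need $\bar\sigma=(2\bar a)^{1/2}$ Lipschitz, but the regularity you extract does not give it. Lipschitz $\bar b$ and locally Lipschitz $\bar a$ are not enough, because the matrix square root is not Lipschitz near degenerate matrices. The property does hold under \textbf{(A.1)}--\textbf{(A.2)}, but it needs an argument:
\begin{itemize}
\item Drive the frozen equation by a two-sided Brownian motion, started from a fixed point at time $s$, and let $s\to-\infty$. The contraction in \textbf{(A.2)} gives $L^2$ limits $\xi^x\sim\pi^x$ on one space $(\Omega',\mP')$ with $\mE'|\xi^x-\xi^{x'}|^2\le C|x-x'|^2$.
\item Since $\bar a(x)=\int a(x,y)\,\pi^x(dy)$ here, you get $2\bar a=\Sigma\Sigma^*$ with $\Sigma(x)h:=\mE'[\sigma(x,\xi^x)h]$ for $h\in L^2(\Omega';\R^m)$. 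This $\Sigma$ is Lipschitz in Hilbert--Schmidt norm because $\sigma$ is Lipschitz in $(x,y)$.
\item The Araki--Yamagami inequality $\||S|-|T|\|_{HS}\le\sqrt2\,\|S-T\|_{HS}$, applied to $S=\Sigma(x)^*$ and $T=\Sigma(x')^*$, then shows that $(2\bar a)^{1/2}=|\Sigma^*|$ is Lipschitz.
\end{itemize}
Alternatively, run your pathwise-uniqueness argument directly with a cylindrical Wiener process on $L^2(\Omega';\R^m)$ and coefficient $\Sigma$.

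Smaller points:
\begin{itemize}
\item Tanaka's estimate on a general convex domain gives continuity of the Skorokhod map with a square-root type modulus, not a Lipschitz bound. Continuity is all that tightness needs. For existence, use penalization, or weak existence plus your Gronwall pathwise uniqueness. Note also that \textbf{(A.2)} gives $b^{(2)}$ only a one-sided Lipschitz bound in $y$, so the global Lipschitz property you assert is not available.
\item Your worry that \textbf{(A.4)} controls only expectations is unnecessary. By the Markov property,
\begin{equation*}
\mE\Big|\int_0^T\big(b(x,Y^{1;x,y}_s)-\bar b(x)\big)\,ds\Big|^2\le C(1+|x|+|y|)^2\int_0^T u\,\beta(u)\,du=o(T^2).
\end{equation*}
This is the device the paper uses for $J_{2,5,1}$ and $I_3$, so the convergence you display holds, even in $L^2$.
\item Drop the submartingale formulation. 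On a nonsmooth, possibly unbounded convex $D_1$ with possibly degenerate $\bar a$, there is no uniqueness theory to invoke. Test functions would also need $\langle\nabla f,\nu\rangle\ge0$ for every $\nu$ in the normal cone at corners.
\item Your second route is the one that works. Show that $X-x-K-\int_0^\cdot\bar b(X)\,ds$ is a martingale with bracket $\int_0^\cdot 2\bar a(X)\,ds$, and identify $K$ by continuity of the Skorokhod map.
\end{itemize}
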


Strong convergence  may not hold in general and we refer the readers to \cite[Section 4.1]{liudi} for  a
counter-example
 in the context of a couple SDE system on the whole Euclidean space 
 (that is, when $D_1=D_2=\R^n$).

 For each $x\in \bar D_1$ and $y\in \bar D_2$,  we 
 next consider the case where the diffusion coefficient of the
 normally reflected diffusion of the slow equation does not depend on the fast component:
\be \label{e:4.1} 
 \begin{cases}
 dX_t^{\e, x, y}=  \sigma(X_t^{\e, x, y})dW_t + b(X_t^{\e, x, y},Y_t^{\e, x, y})dt
  +   \mathbf{n}(X_t^{\e, x, y})d\phi_t^{\eps;x,y}, \\
 dY_t^{\e, x, y}= \e^{-{1/2}}\sigma^{(2)}(X_t^{\e, x, y},Y_t^{\e, x, y})dB_t +  \e^{-1}b^{(2)}(X_t^{\e, x, y},Y_t^{\e, x, y})dt 
  +  \mathbf{n}^{(2)}(Y_t^{\e, x, y})d\phi_t^{(2),\eps;x,y} , 
 \end{cases}
\ee
with $X_0^{\eps;x,y}=x\in\bar{D}_1$ and   $Y_0^{\eps;x,y}=y\in\bar{D}_2.$

\medskip

 By the arguments  for the proof of  \cite[Theorem 3.1]{rwz},  we know
 $Y^{1;x,y}_t$ is a homogeneous Markov process
that  is ergodic and  has a unique invariant  probability measure $\pi^{x}$. Moreover, 
there exist constants $C>0$ and $\gamma >0$ such that for any $x\in\bar{D}$ and $u\in \mR$, 
\be\label{lefef}
	|{\mathbb{E}}b(x, Y_t^{1;x,y}) - \bar{b}(x)|
	\leq C\big(1 +|x|+|y|\big)e^{-\gamma t/2},
\ee
where $\bar{b}(x) := {\int_{\bar{D}_2}b(x,y)\pi^{x}(dy)}$. 
Hence the condition {\bf (A.4')} on p. 178 of \cite{C-W22} is satisfied.
Thus as a corollary of  \cite[Theorem 4.4]{C-W22}, we have following quantitative   averaging principle
for the coupled reflected SDE system \eqref{e:4.1}.

\bt \label{HM4} 
Suppose  that \textbf{(A.1)-(A.3)} hold. 
Then for every $0\leq t\leq T$,
{every $x\in \bar D_1$ and $y\in \bar D_2$,}
 $$
 \mE \Big[ | {X}_t^{\e, x, y}-\bar{X}^x_t |^2 \Big] \leq C\e^{1/2},
 $$
 where {$(\bar X^x, \bar \phi^x )$ }
 is the reflected diffusion on $\bar D_1$ given by 
\begin{equation*} 
  d\bar{X}^x_t= \sigma ( \bar{X}^x_t) d W_t+ \bar{b}(\bar{X}^x_t) d t
+
  \mathbf{n}(\bar{X}^x_t)d \bar{\phi}^x_t
   \quad \hbox{with } \bar{X}^x_0=x \in \bar D_1.
\end{equation*}
\et

 \section{Homogenization of BSDEs and PDEs with Neumann boundary conditions}\label{S:4}

It is well known that there is a close connection between second order elliptic operators 
and diffusion processes; see \cite{Freidlin}, \cite{ik-wa} and the references therein. 
 For instance,  consider the following partial differential equation with Neumann boundary condition in 
 an open smooth domain $D$  in $\R^n$: 
\be \label{e:4.1a}
\begin{cases}
\frac{\partial u}{\p t} = \cL u(t,x)- g(t,x)u(t,x), \quad \hbox{for} ~t\geq0 \hbox{ and } x\in D,\\
u(0,x)=h(x),  \quad x\in D,\\
\frac{\partial u}{\p \mathbf{n}}=0, \quad  x\in\p  D,
\end{cases}
\ee
where $\cL$  has the following form:
 \be\label{cL}
 \cL:=\sum_{i, j=1}^na^{ij}(x)\frac{\partial^2}{\partial x^i\partial x^j} +\sum_{i=1}^n b^i(x)\frac{\partial}{\partial x^i},
 \ee
 and ${\bf n}$ is the unit inward normal vector field on $\partial D$. 

 {Under some suitable conditions, \eqref{e:4.1a} has a a unique solution and it is given by
 (see, for example, \cite{Freidlin}, \cite{fw84})
\be\label{FK1}
u(t,x)=\mE\left[h(X^{0,x}_t)\exp{\left\{-\int_0^t g(t-s, X^{0,x}_s)ds\right\}}\right],
\quad t\geq 0, x\in \bar D.
\ee
 Such connection has been extended to non-linear second order elliptic equations via backward SDEs, 
pioneered by  Pardoux and Peng \cite{PP90}, \cite{PP92}.}

 For $t\geq 0$ and $x\in \bar D$, let $X^{t,x}$  be the normally reflected diffusion process on $\bar D$ given by
 \be\label{rsde0}
dX_s^{t,x}=\sigma(X_s^{t,x})dW_s+b(X_s^{t,x})dt + \mathbf{n}(X_s^{t,x})d\phi_s^{t,x} \quad \hbox{for } s\geq t
\ \hbox{ with } \  
X_t^{t,x}=x\in \bar{D},
\ee
where ${\frac12}\sigma (x) \sigma^* (x)= a(x)$ and 
$s\mapsto \phi_s^{t,x}$ is a continuous non-decreasing real-valued process with $\phi_t^{t,x}=0$ that increases
only when $X_s^{t,x} \in \partial D$. We call $\phi^{t,x}$ the local time process for the reflected diffusion $X^{t,x}$. 
 For the reflected diffusion process $X^{t,x}$ on $\bar D$  of  \eqref{rsde0}, 
Pardoux and Zhang \cite{par-zh} studied  
 the following type of  backward SDE involving the  local time process $\phi^{t, x}$ for $X^{t,x}$:
\be\label{bsde02}
U_s^{t,x} = h(X_T^{t,x}) + \int_s^T f(r, X_r^{t,x}, { U}_r^{t,x}, Z_r^{t,x}) dr + \int_s^T g(r,X_r^{t,x}, { U}_r^{t,x}) d\phi^{t,x}_r- \int_s^T Z_r^{t,x} dW_r
\ee
for $s\in [t, T]$. 

\medskip

When $b, ~\sigma$ are Lipschitz continuous, and $f, ~g, ~h$ satisfy the following condition:
\begin{enumerate}
\item [(\textbf{A})]
There exist constants $C_i>0, ~i=1,2,3, ~\alpha_1, ~\alpha_2\in\mathbb{R}$ so that for any $x, ~x'\in\mR^n$,  $ u, ~u'\in\mR$, $z, ~z'\in\mR^n$,
\ce\begin{aligned}
&\<u-u',g(t,x,u)-g(t,x,u')\>\leq -\alpha_1|u-u'|^2;  \\
&\<u-u',f(t,x,u,z)-f(t,x,u',z)\>\leq -\alpha_2|u-u'|^2; \\
&|f(t,x,u,z)-f(t,x,u,z')|\leq C_1|z-z'|;   \\
&|f(t,x,u,z)|\vee |g(t,x,u)|\leq C_2(1+|x|+|u|+|z|);     \\
&|h(x)-h(x')|\leq C_3|x-x'|,
\end{aligned}
\de
\end{enumerate}
Pardoux and Zhang \cite{par-zh}  established  the existence and uniqueness of solution of \eqref{bsde02}, 
 which is a pair of $\mathbb{R} \times \mathbb{R}^n$-valued progressively measurable process
 $\{( U_s^{t,x}, Z_s^{t,x} ); s\in [t, T]\}$
 with respect to the minimum augmented 
filtration generated by $\{W_r-W_t; r\in[t,s]\}_{t\leq s\leq T}$. Note that $U_t^{t,x}$ is deterministic.  
 Set 
$$
u(t,x) :=U_t^{t,x} \quad \hbox{for }  (t,x) \in [0, T] \times \bar{D}.
$$
 For the situations when $D$ is a smooth, open and  bounded smooth domain and when $D$ is a convex non-empty domain,
  it is established in  \cite[Proposition 4.1 and Theorem 4.3]{par-zh} and in \cite[Theorem 3.1 and Section 5.1]{par-ras}, respectively,  
  that $u(t,x)$  is continuous on $[0, T] \times \bar{D}$   
and  is a viscosity solution of the following semi-linear PDE with Neumann boundary condition:
\be\label{N-pde}
\begin{aligned}\begin{cases}
&\frac{\partial u}{\partial t}(t,x) + \cL u(t,x) + f(t,x,u(t,x),( (\nabla_x u) \sigma)(t,x)) = 0
 \quad  \hbox{for } (t,x)\in(0, T)\times D; \\
&  \frac{\partial u}{\partial \mathbf{n}}(t,x) = g(t,x,u(t,x))  
 \quad \hbox{for } (t,x)\in(0, T)\times\partial D; \\
&u(T,x) = h(x),\ \hbox{for } x \in \bar{D};
\end{cases}
\end{aligned}
\ee
Here $\nabla_x u $ denotes the gradient for the function  $u$ taken in the $x$-variable { and $\cL$ is  the operator given in \eqref{cL}.
This connection between the forward-backward SDEs with reflection \eqref{rsde0} and \eqref{bsde02}
and the semi-linear PDE \eqref{N-pde} with Neumann boundary condition provides a path to connect homogenization problems for \eqref{N-pde}
with  the homogenization problem for forward-backward SDEs with reflection. 
Indeed, this has been explored in a recent paper  \cite{C-W22} of the authors in the framework of Neumann-type semilinear parabolic 
variational inequalities. To avoid technicality,   we present this result in \S  \ref{S:4.1}  for the  case where the lower-semicontinuous 
convex function $\psi$ in \cite[(1.3)]{C-W22} is the indicator function of a convex domain $D_1$ in $\R^n$ so its sub-differential $\partial \psi (x) $
at $x\in \partial D_2$ is just the unit inward normal vector field at $x$ for $D_2$. 
We then present the main results of this paper on the quantitative homogenization for  semi-linear PDE \eqref{N-pde} with Neumann boundary condition,
first on smooth, bounded and convex domains in \S \ref{S:4.2} and then on non-smooth convex and possibly unbounded domains in \S \ref{S:4.3}.

\subsection{Homogenization of BSDEs and PDEs with Neumann boundary conditions} \label{S:4.1}

Let $D_1$ be an open convex and bounded domain  $D_1$ in $\R^n$ given by $D_1=\{x\in\R^n; l(x) \geq 0\}$ for some convex function
 $l\in\cC_b^2(\R^n)$ with $|\nabla l(x)|=1$ on $\partial D_1$, and $D_2$ is another  non-empty convex domain of $\R^n$.
 We use ${\bf n}$ and  ${\bf n}^{(2)}$ to denote  the unit inward normal vector field on $\partial D_1$ and $\partial D_2$, respectively. 
 The notations  $\frac{\partial_x  }{\partial \mathbf{n}} $ and $\frac{\partial_y }{\partial \mathbf{n}^{(2)}}$ stand  for the normal derivatives
 taken in the $x$-variable on $\partial D_1$ and in the $y$-variable on $\partial D_2$, respectively.

 In this subsection, we consider    the following multi-scale parabolic semi-linear   PDE on $D_1\times D_2$ with Neumann condition:
 \be\label{pdeN1}
\left\{
  \begin{array}{lllll}
    \frac{\partial }{\partial t} u^{\e} +(\cL_1+ \e^{-1} \cL_2)u^{\e}+f(x,y,u^{\e})  =0
     \quad  \hbox{for }  (t,x,y)\in [0,T]\times D_1\times{D}_2 , \medskip\\
      \frac{\partial_x  }{\partial \mathbf{n}} u^{\e} = g(x,u^{\e})  \quad  \hbox{for }   (t,x,y)\in [0,T]\times \partial D_1\times  \bar{D}_2,\medskip\\
 \frac{\partial_y }{\partial \mathbf{n}^{(2)}}u^{\e}=0 \quad  \hbox{for }    (t,x,y)\in [0,T]\times \bar{D}_1\times  \partial {D}_2,
        \medskip\\
       u^{\e}(T,x,y)=h(x) \quad  \hbox{for } (x, y) \in \bar{D}_1 \times \bar D_2,
  \end{array}
\right.
\ee
where $\cL_1, ~\cL_2$ are given in \eqref{LL2},  
   the coefficients $f, ~g, ~h$ are continuous functions satisfying  the following {condition
\textbf{(H.3)}.
\begin{enumerate}
\item[\textbf{(H.3)}]
There exist constants $C_i>0, ~i=0,1,2,3, ~\alpha\in\mathbb{R}$ so that for any  $ u, ~u'\in\mR$, $x, x', y, y'\in\mR^n$,
\ce\begin{aligned}
&\<u-u',g(t,x,u)-g(t,x,u')\>\leq \alpha|u-u'|^2;  \\
&|f(t,x,y,u)-f(t,x,y, u')|\leq C_0|u-u'|^2; \\
&|f(t,x,y,u)-f(t,x',y',u)|\leq C_1(|x-x'|+|y-y'|);   \\
&|g(t,x,u)|\leq C_2(1+|x|+|u|);  ~~|h(x)-h(x')|\leq C_3|x-x'|,
\end{aligned}
\de
\end{enumerate}}

As mentioned above, this type of PDE has a connection  with the following  reflected forward-backward SDE: 
for { $t\in [0, T)$}  and $(x, y)\in  \bar{D}_1 \times \bar{D}_2$,
\be\label{bsdee0} 
U^{\e,t, x,y}_s&=&h(X^{\e,t, x,y}_T)+\int_s^Tf( X^{\e,t, x,y}_r, Y^{\e,t, x,y}_r,U^{\e,t, x,y}_r)dr\nonumber\\
&&\skip 0.6truein +\int_s^Tg(X^{\e,t, x,y}_r, U^{\e,t, x,y}_r)d\phi^{\e,t, x,y}_r-\int_s^TZ^{\e,t, x,y}_rd W_r,
\quad  s\in [t, T], 
\ee 
where  $X^{\eps, x, y}$ and $Y^{\eps, x, y}$ are  coupled reflected diffusion processes on $\bar D_1$ and $\bar D_2$ having 
$X_t^{\eps, t,x,y}=x$ and $Y_t^{\eps, t,x,y} =y$ { determined by} 
\begin{eqnarray}\label{ebsde00}\left\{
  \begin{array}{lll}
    d X^{\eps, x, y}_s=b(X_s^{\eps, t, x, y},Y_s^{\eps, t, x, y}) d s+\sigma(X_s^{\eps, t, x, y},Y^{\eps, t, x, y}_s)d W_s
  +\mathbf{n}(X^{\eps, t, x, y}_s)d \phi^{\e, t,x,y}_s
     ,  \medskip  \\
     d Y^{\eps, t, x, y}_s=\e^{-1}b^{(2)}(X_s^{\eps, t, x, y},Y_s^{\eps, t, x, y})d s+\e^{-1/2}{\sigma}^{(2)}(X_s^{\eps, t, x, y},Y_s^{\eps, t, x, y})d B_s
   +\mathbf{n}^{(2)}(Y^{\eps}_s) d  \phi^{(2),\eps,t,x,y}_s.
       \end{array}
\right.
\end{eqnarray}

Denote $\tilde{W}_\cdot :=(W_\cdot, B_\cdot)^\top$. Suppose conditions  in Theorem \ref{HM_RSDE_W} and \textbf{(H.3)} hold,  then by \cite[Theorem 1.7] {par-zh} and  \cite[Proposition 2.5]{par-ras},  for each $\e>0$, \eqref{bsdee0} has a unique solution $(U^{\e,t, x,y},Z^{\e,t, x,y})$, 
which is $\mathbb{R} \times \mathbb{R}^n$-valued progressively measurable with respect to the minimum augmented 
filtration generated by $\{\tilde{W}_r-\tilde{W}_t; r\in[t,s]\}_{t\leq s\leq T}$ and satisfies that 
\ce
\mE \left[ \sup_{s\in[t,T]}|U^{\e,t, x,y}_s|^2 \right] +\mE\int_t^T|Z^{\e,t, x,y}_s|^2ds<\infty.
\de
 Note that  $U^{\e,t, x,y}_t$ is deterministic. Define 
$$
u^{\e}(t,x,y):=U^{\e,t, x,y}_t.
$$ 
Then $u^{\e}(t,x,y)$ is continuous on $[0,T]\times\bar{D}_1\times \bar{D}_2$  (\cite[Theorem 3.1]{par-ras}). 
Moreover, by arguments in the proofs of \cite[Theorem 4.3]{par-zh} and \cite[Theorem 4]{za},
$u^{\e}(t,x,y)$ is a viscosity solution to \eqref{pdeN1}.

Due to this connection, the homogenization of \eqref{pdeN1} can be studied via 
the homogenization of   \eqref{ebsde00}
in the $S$-topology, to be  stated in Theorem \ref{HMbsde} below. The $S$-topology 
is introduced in \cite{Jak} on the Skorohod space of c\`adl\`ag (right continuous with left limit at every point) functions.

 For  each $x\in \bar D_1$ and $y\in\bar{D}_2$,   let $Y^{1;t,x, y}$  be  the reflected diffusion on $\bar D_2$ determined by 
\be\label{frozen}
 d Y^{1,t, x, y}_s= b^{(2)}(x,Y_s^{1, t, x, y})d s+ {\sigma}^{(2)}(x,Y_s^{1, t,  x, y})d B_s
     +\mathbf{n}^{(2)}(Y^{1,t, x, y}_s)d\phi^{(2), t, x, y}_s  \quad\hbox{for } s\geq t ,
\ee
with   $Y_t^{1, t, x, y}=y\in \bar{D}_2$.

By the arguments in \cite[Theorem 3.1]{rwz}, under the conditions in Theorem \ref{HM4},  {$Y^{1, t, x, y}$}
 has a unique  invariant measure $\pi^x$.
  Set $\bar{f}(x,u) := {\int_{\bar{D}_2}f(x,z,u)\pi^{x}(dz)}$. Then
there exist constants $C>0$ and $\gamma >0$ such that for any $x\in\bar{D}_1, ~u\in \mR$ and $s\in [t,T]$, 
\be\label{lefef2}
	|{\mathbb{E}}f(x,Y_s^{1,t,x,y},u) - \bar{f}(x,u)|
	\leq C\big(1 +|x|+|y|+|u|\big)e^{-\gamma  (s-t)/ 2}.
\ee
Since  $\bar{f} (x, u)$ is  Lipschitz continuous on $\bar D_1 \times \R$,}
 by \cite[Theorem 4.3.1]{ZJF} again,  the following backward SDE has a unique solution
 {$(U^{t, x}, Z^{t, x})$:}
 \be\label{bsde2b} 
U^{t, x}_s =h(X^{t, x}_T) + \int_s^T \bar{f}(X^{t, x}_r, U^{t, x}_r) \, dr+\int_s^Tg(X^{t,x}_r,U^{t,x}_r)d\phi^{t,x}_r - \int_s^T Z^{t, x}_r \, dW_r
\quad \hbox{for }  s\in [t,T].
\ee
Here {$X^{t, x}$} is given by the reflected SDE as in \eqref{eq_average_overline_X_t_x}:
\ce
dX^{t, x}_s = \bar{b}(X^{t, x}_s) \, ds + \bar{\sigma}(X^{t, x}_s) \, dW_s+\mathbf{n}(X^{t, x}_s) d\phi^{t, x}_s \quad \hbox{with } X^{t, x}_t = x.
\de

 Note that 
 $(U^{t,x}_s, Z^{t, x}_s)_{t\leq s\leq T}$
  are  progressively measurable with respect to the natural  
filtration generated by $\{{W}_r-{W}_t; r\in[t,s]\}_{t\leq s\leq T}$ and  $U^{t, x}_t$ is deterministic
for every $t\in [0, T]$.

 The following theorem can be proved following the arguments { for}  \cite[Theorem 5.2]{C-W22}.

\begin{theorem}\label{HMbsde}
Suppose that the  conditions of Theorem \ref{HM_RSDE_W} and  \textbf{(H.3)} hold.
Then for every $0\leq t<T$, 
as $\eps \to 0$, $U^{\e,t, x,y}$ converges  in distribution 
to $\bar{U}^{t, x}$  in the  $S$-topology in the space $\D([t,T];\mR)$ 
of right continuous functions with left limits. In addition,  for any $(t,x,y)\in[0,T]\times\bar D_1\times
 \bar{D}_2$, 
 $$
 \lim_{\eps \to 0} u^{\e}(t,x,y) = \bar{u}(t,x), 
 $$
 where  $\bar{u}(t,x):=\bar{U}_t^{t, x}$  is the unique viscosity solution of the following averaged  Neumann type PDE:
\be \label{e:5.7}
\left\{
  \begin{array}{lllll}
    \frac{\partial  }{\partial t} \bar{u} + \bar{\cL}\bar{u}+\bar{f}(x,\bar{u}) =0 
       \quad  \hbox{for }  (t,x)\in[0,T]\times {D_1},  \medskip \\
     \frac{\partial }{\partial \mathbf{n}} \bar{u} = g(x,\bar{u}) \quad  \hbox{for }  (t,x)\in[0,T]\times\partial D_1,  \medskip
    \\
    \bar{u}(T,x)={h}(x) \quad \hbox{for }   x\in \bar{D}_1,
  \end{array}
\right.
\ee
Here $\bar{\cL} :=\sum_{i,j=1}^n\bar{a}^{ij}(x)\frac{\partial ^2}{\partial x^i\partial x^j}
+ \sum_{i=1}^n  \bar{b}^i(x)\frac{\partial}{\partial x^i}$. 
\et

\subsection{Quantitative homogenization of PDEs with nonlinear Neumann boundary conditions  on smooth bounded domains} \label{S:4.2}

 Theorem \ref{HMbsde} gives 
  the homogenization result for $u^{\eps}(t,x,y)$. A natural but important question  is that if we can obtain the convergence rate.  
  In this section, we will address  this question by utilizing 
   the strong convergence rate established { in \cite[Theorem 4.4]{C-W22}, as stated in} Theorem \ref{HM4}. 
Suppose $D_1$ and $D_2$ are $n$-dimensional  non-empty open bounded convex domains satisfying $D_i:=\{x\in\R^n; l_i(x) > 0\}$
  for some convex functions
 $l_i\in\cC_b^2(\R^n)$ with $|\nabla l_i(x)|=1$ on $\partial D_i$ with $i=1, 2.$
Consider the following PDE 
with nonlinear Neumann boundary condition: 
\be\label{pdeN20}
\left\{
  \begin{array}{lllll}
    \frac{\partial  }{\partial t}  u^{\e} + (\cL_1+ \e^{-1} \cL_2)u^{\e}+f(x,y,u^{\e}) = 0
     \quad \hbox{for } (t,x,y)\in [0,T]\times \bar D_1\times\bar{D}_2, \medskip\\
        \frac{\partial_x }{\partial \mathbf{n}}  u^{\e} =g(x,u)     \quad  \hbox{for }   (t,x,y)\in [0,T]\times \partial D_1 \times\bar{D}_2,\medskip\\ 
       \frac{\partial_y }{\partial \mathbf{n}^{(2)}} u^{\e} =0. \quad   \hbox{for }  (t,x,y)\in [0,T]\times \bar{D}_1 \times\partial {D}_2,  \medskip\\
       u^{\e}(T,x,y)=h(x) \quad  \hbox{for }  ( x, y) \in \bar{D}_1 \times \bar D_2,
  \end{array}
\right.
\ee
where $\cL_1, ~\cL_2$ are   the operators  given in \eqref{LL2} with $a(x,y)=a(x)=\frac12\sigma\sigma^*(x)$.
Consider the following generalized backward SDE:  for $s\in [t, T],$
\begin{eqnarray}\label{bsde2}
U_s^{\varepsilon,t, x, y} &=& h(X_T^{\varepsilon,t, x, y}) + \int_s^T f(X_r^{\varepsilon,t, x, y}, Y_r^{\varepsilon,t, x, y}, U_r^{\varepsilon,t, x, y}) \, dr + \int_s^T g(X_r^{\varepsilon,t, x, y},  U_r^{\varepsilon,t, x, y}) \, d\phi^{\eps, t, x, y}_r \nonumber\\
&&  - \int_s^T Z_r^{\varepsilon,t, x, y} \, dW_r,  
\end{eqnarray}
which is  decoupled with slow-fast reflected SDE system on convex domains 
  $D_1$ and $D_2$:
\be\label{rsde-bsde2} 
\begin{cases}
dX_s^{\eps, t, x, y} = b(X_s^{\eps, t, x, y}, Y_s^{\eps, t, x, y}) \, ds + \sigma(X_s^{\eps, t, x, y}) \, dW_s 
+\mathbf{n}(X_s^{\eps, t, x, y} )d\phi^{\eps, t, x, y}_s,  \\
dY_s^{\eps, t, x, y} = \eps^{-1}b^{(2)}(X_s^{\eps, t, x, y}, Y_s^{\eps, t, x, y} ) \, ds 
+ \eps^{-1/2} \sigma^{(2)}(X_s^{\eps, t, x, y}, Y_s^{\eps, t, x, y}) \, dB_s  
+ \mathbf{n}^{(2)}(Y_s^{\eps, t, x, y}) d \phi_s^{(2), \eps, t, x, y},  \end{cases}
\ee
{with $ X_t^{\eps, t, x, y}  = x\in \bar{D}_1$ and $Y_t^{\eps, t, x, y}  = y\in\bar{D}_2$, } 
where $B$ and $W$ are two independent Brownian motions.

According to \cite[Theorem 1.7] {par-zh} and  \cite[Proposition 2.5]{par-ras},  for each $\e>0$, \eqref{bsde2} has a unique solution $(U^{\e,t, x,y},Z^{\e,t, x,y})$, 
which is $\mathbb{R} \times \mathbb{R}^n$-valued progressively measurable with respect to the minimum augmented 
filtration generated by $\{\tilde{W}_r-\tilde{W}_t; r\in[t,s]\}_{t\leq s\leq T}$. 
 Note that  $U^{\e,t, x,y}_t$ is deterministic. Then
$$
u^{\e}(t,x,y):=U^{\e,t, x,y}_t
$$
 is continuous on $[0,T]\times\bar{D}_1\times \bar{D}_2$  (\cite[Theorem 3.1]{par-ras}). 
Moreover, by arguments in the proofs of \cite[Theorem 4.3]{par-zh} and \cite[Theorem 4]{za},
$u^{\e}(t,x,y)$ is a viscosity solution to \eqref{pdeN20}.

Due to this connection, the homogenization of \eqref{pdeN20} can be studied via 
the homogenization of  \eqref{bsde2}. 
The averaged generalized backward SDE has the following form: 
\be\label{bsdeav2} 
{U}^{t, x}_s={h}({X}^{t, x}_T)+\int_s^T\bar{f}({X}^{t, x}_r, {U}^{t, x}_r)dr+\int_s^Tg({X}^{t, x}_r, {U}^{t,x}_r)d{\phi}^{t, x}_r
-\int_s^T{Z}^{t, x}_rd W_r,  \quad s\in[t,T],
\ee
where $\bar{f}$ satisfies \eqref{lefef2}, and ${X}^{t,x}$ is given by the following equation:
\be\label{ave_rsde}
d{X}^{t,x}_s=\bar{b}({X}^{t,x}_s)ds+{\sigma}({X}^{t,x}_s)dW_s+\mathbf{n}({X}^{t,x}_s)d{\phi}^{t,x}_s, \ s\geq t,
\quad \bar{X}^{t,x}_t=x\in\bar{D}_1,
\ee
with $\bar{b}$ being  the same as in Theorem \ref{HM4}.

By \cite[Theorem 1.7] {par-zh} and  \cite[Proposition 2.5]{par-ras}}, BSDE  \eqref{bsdeav2} has 
 a unique solution  $({U}^{t,x},{Z}^{t,x})$ and ${U}^{t,x}_t$ is   deterministic for every $t\in [0, T]$.

The following is the first main result of the paper, which gives a quantitative homogenization result for BSDE \eqref{bsde2}.

\bt\label{HMbsde21}
Suppose the conditions of  Theorem \ref{HM4} and \textbf{(H.3)} hold. Then for every $s\in[t,T]$, 
$x\in \bar D_1$ and $y\in \bar D_2$,  for any $p\in(1,2)$, 
\be\label{hmbrate}
\mathbb{E} \left[ |U^{\varepsilon,t, x, y}_s-U^{t, x}_s|^2 \right] \leq C\e^{\frac{p-1}{2(2p-1)}}.
\ee
 \et

 Set $\bar{u}(t,x):=U^{t,x}_t$, where $U^{t,x}$ is the unique solution of \eqref{bsde2b}. Then $\bar{u} (t, x)$ is continuous 
 { on $[0, T]\times \bar D_1$} and, by \cite[Theorem 4.3]{par-zh},  is a viscosity solution to the following PDE:
\ce
\left\{
  \begin{array}{lllll}
     \frac{\partial }{\partial t}  \bar{u} + \bar{\cL}\bar{u}+\bar{f}(x,\bar{u})=0  \quad  \hbox{for }  (t,x)\in[0,T]\times D_1,  \medskip \\
       \frac{\partial  }{\partial \mathbf{n}} \bar{u} =g(x,\bar{u})    \quad  \hbox{for }   (t,x)\in[0,T]\times\partial D_1,  \medskip
    \\
    \bar{u}(T,x)={h}(x)  \quad  \hbox{for }  x\in\bar{D}_1,
  \end{array}
\right.
\de
where $\bar \cL :=\sum_{i,j=1}^n{a}^{ij}(x)\frac{\partial ^2}{\partial x^i\partial x^j}
+ \sum_{i=1}^n  \bar{b}^i(x)\frac{\partial}{\partial x^i}$.
  Hence as a direct application of Theorem \ref{HMbsde21}, 
 we have the following quantitative homogenization result for 
 the viscosity solution $u^{\eps}(t,x,y)$ of equation \eqref{pdeN20}.
  
\begin{theorem}\label{HM-pde-N3}
Suppose the conditions of Theorem \ref{HMbsde21} hold. Then for every $T>0$,  there is a constant $C>0$ so that for any $p\in(1,2)$,
$$|u^{\e}(t,x,y)-\bar{u}(t,x)|\leq C\e^{\frac{p-1}{4(2p-1)}}
\quad   \hbox{for every } (t, x, y) \in[0,T]\times \bar{D}_1\times \bar{D}_2.
$$
\end{theorem}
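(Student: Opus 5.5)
Theorem \ref{HM-pde-N3} follows almost directly from Theorem \ref{HMbsde21} by evaluating at $s=t$, where both backward processes are deterministic. By the representations established above, $u^{\e}(t,x,y)=U^{\e,t,x,y}_t$ for the solution of \eqref{bsde2} coupled with \eqref{rsde-bsde2}. Likewise $\bar u(t,x)=U^{t,x}_t$, where $U^{t,x}$ is the solution of the averaged generalized BSDE \eqref{bsdeav2} driven by the averaged reflected diffusion \eqref{ave_rsde}. Here one should note that the PDE for $\bar u$ has the operator $\bar\cL$ with diffusion coefficient $a(x)$ (not averaged, since $\sigma$ is independent of $y$) and drift $\bar b$. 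This is exactly the generator of \eqref{ave_rsde}, so the $\bar u$ of the statement is the one represented by \eqref{bsdeav2}. Both quantities are deterministic: $U^{\e,t,x,y}_t$ and $U^{t,x}_t$ are measurable with respect to the trivial $\sigma$-field generated by $\tilde W_t-\tilde W_t$, completed.

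Applying Theorem \ref{HMbsde21} with $s=t$ therefore gives
\[
|u^{\e}(t,x,y)-\bar u(t,x)|^2=\mathbb{E}\big[|U^{\e,t,x,y}_t-U^{t,x}_t|^2\big]\le C\e^{\frac{p-1}{2(2p-1)}} .
\]
Taking square roots yields the exponent $\frac{p-1}{4(2p-1)}$.

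The only point requiring care is uniformity. The statement asks for a constant $C$ independent of $(t,x,y)\in[0,T]\times\bar D_1\times\bar D_2$ and of $p\in(1,2)$. I would go back through the constant in Theorem \ref{HMbsde21} and check that it depends on $(x,y)$ only through terms like $1+|x|+|y|$. Such terms arise from the linear growth bounds in \textbf{(A.1)}, from the ergodic estimate \eqref{lefef2}, and from the averaging rate in Theorem \ref{HM4}. Since $D_1$ and $D_2$ are bounded in this subsection, these factors are uniformly bounded on $\bar D_1\times\bar D_2$.

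I would also check the other sources of the constant. Its dependence on $t$ enters only through $T-t\le T$, via Gronwall-type and local-time moment bounds such as $\mathbb{E}\,e^{\mu\phi^{\e,t,x,y}_T}$. These are uniform for bounded smooth convex $D_1$ with $|\nabla l_1|=1$ on $\partial D_1$, using It\^o's formula applied to $l_1(X_s)$. Any dependence on $p$ is through Hölder/interpolation constants that stay bounded for $p$ in compact subsets. If the constant blows up as $p\to 1$ or $p\to 2$, the statement should be read with $C=C_p$, which is harmless since the exponent is fixed once $p$ is fixed. This uniformity check is the main obstacle, and I expect it to be routine given the boundedness of the domains.
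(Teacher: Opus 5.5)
Your proposal is correct and is exactly the paper's argument: the paper obtains Theorem \ref{HM-pde-N3} as a direct application of Theorem \ref{HMbsde21} at $s=t$. It uses that $u^{\varepsilon}(t,x,y)=U^{\varepsilon,t,x,y}_t$ and $\bar u(t,x)=U^{t,x}_t$ are deterministic, with \eqref{bsde2b} and \eqref{bsdeav2} coinciding because $a=a(x)$, and then takes square roots.

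Your caveat that the constant from the proof of Theorem \ref{HMbsde21} depends on $p$ (through the H\"older exponents $\frac{p}{p-1}$ and $\frac{2p}{2-p}$) is fair, so the statement is safely read with $C=C_p$; the paper's one-line derivation does not address this.
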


\medskip
 
To prove Theorem \ref{HMbsde21}, we first need to establish some  estimates for  the solutions of \eqref{bsde2} and  \eqref{rsde-bsde2}.

\begin{proposition}\label{MME1} 
Eq. \eqref{rsde-bsde2}  and  Eq. \eqref{ave_rsde} 
admit a unique strong solution$(X^{\eps, t, x, y},  \phi^{\eps, t, x, y}, Y^{\eps, t, x, y},  \phi^{(2),\eps, t, x, y})$ and $(X^{t, x}, \phi^{t,x})$ respectively,
 satisfying that for any $q\geq1$,
\begin{eqnarray}\label{MMXY1}
&\sup _{\varepsilon} \mE \Big[ \sup_{s\in[t, T]}\left|X_s^{\eps, t, x, y}\right|^{2 q} \Big] 
\leqslant C_{q, T}\left(1+|x|^{2 q}+|y|^{2q}\right).\nonumber\\
&\sup _{\varepsilon} \sup_{s \in[t, T]} \mE \Big[ \left|Y_s^{\eps, t, x, y}\right|^{2q} \Big]
 \leqslant C_{q, T} \left(1+|x|^{2q}+|y|^{2q}\right),\nonumber\\
&\mE \Big[ \sup_{s\in[t, T]}\left|X_s^{t, x}\right|^{2q} \Big] \leq C_{q, T}\left(1+|x|^{2 q}\right).
\end{eqnarray}
Furthermore,  for any $\eta>0$,  there exists a constant $C_{T,\eta}>0$ independent of $\eps$ so that
\begin{eqnarray}\label{MMphi1}
&\sup _{\varepsilon} \mE \left[ e^{\eta \phi^{\eps, t, x, y}_T} \right] \leq C_{T,\eta}<\infty,
\quad \mE \left[ e^{\eta \phi^{t, x}_T} \right] \leq C_{T,\eta}<\infty.
\end{eqnarray}
\end{proposition}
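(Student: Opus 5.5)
Existence and uniqueness of strong solutions to \eqref{rsde-bsde2} and \eqref{ave_rsde} I would take from the results already quoted. By \textbf{(A.1)}--\textbf{(A.2)} the coefficients are Lipschitz, so \cite[Theorem 3.2]{C-W22} (stated as Theorem \ref{HM_RSDE_W}) gives a unique strong solution of the coupled reflected system for each fixed $\eps$. For the averaged equation I would first check that $\bar b$ is Lipschitz and of linear growth. This follows from the exponential ergodicity \eqref{lefef} together with the contraction in \textbf{(A.2)}, which gives $W_2(\pi^x,\pi^{x'})\le C|x-x'|$. With that in hand, the classical Lions--Sznitman/Tanaka theory for reflected SDEs on convex domains applies.

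For the moment bounds I would use It\^o's formula for $|X_s|^{2q}$. The one fact needed about the reflection is that, since $D_1$ is convex, bounded and contains a ball $B(x_0,\delta)$, we have $\langle x-x_0,\mathbf{n}(x)\rangle\le -\delta$ on $\partial D_1$; more simply, $\langle x - x_0, \mathbf n(x)\rangle\le 0$. So the term $2q|X-x_0|^{2q-2}\langle X-x_0,\mathbf n(X)\rangle d\phi$ is nonpositive and can be dropped. Linear growth \eqref{linear}, BDG and Gronwall then bound $\mE\sup_s|X_s|^{2q}$ by $C(1+|x|^{2q}) + C\int_t^T\mE|Y_r|^{2q}dr$. (Here $D_1$ is bounded, so the $X$-bound is in fact trivial; I would still present the general argument.)

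For $Y$ I would apply It\^o's formula to $|Y_s|^{2q}$ without the supremum. This gives the drift term $\eps^{-1}q|Y|^{2q-2}\big(2\langle Y,b^{(2)}\rangle+\|\sigma^{(2)}\|^2\big)+\eps^{-1}C|Y|^{2q-2}\|\sigma^{(2)}\|^2$. Using \textbf{(A.3)} (or the dissipativity in \textbf{(A.2)} combined with boundedness), this is at most $\eps^{-1}(-c|Y|^{2q}+C)$. The reflection term is again nonpositive after centering at an interior point of $D_2$. Taking expectations and solving the differential inequality $\frac{d}{ds}\mE|Y_s|^{2q}\le \eps^{-1}(-c\,\mE|Y_s|^{2q}+C)$ gives $\mE|Y_s|^{2q}\le |y|^{2q}+C$, uniformly in $\eps$. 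This is the step where the $\eps^{-1}$ factor has to be absorbed by the dissipation. The averaged $X^{t,x}$ is handled in the same way as $X^{\eps}$.

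I expect the exponential moments \eqref{MMphi1} to be the main obstacle. Following Pardoux--Zhang, I would use $l\in\cC_b^2$ with $\nabla l=\mathbf n$ on $\partial D_1$. Applying It\^o's formula to $l(X_s)$ gives
\[
\phi_T-\phi_t=l(X_T)-l(X_t)-\int_t^T\mathcal{A}l\,dr-\int_t^T\nabla l(X_r)\sigma(X_r)dW_r ,
\]
where $\mathcal{A}l$ is bounded by $C(1+|Y_r|)$, because $b$ grows linearly and $X$ lies in the bounded set $\bar D_1$. The $l$ terms and the stochastic integral, whose integrand is bounded since $\sigma$ is continuous on the compact set $\bar D_1$, contribute finite exponential moments by Novikov and Cauchy--Schwarz. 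The delicate part is $\exp(\eta C\int_t^T|Y_r|dr)$, which must be controlled uniformly in $\eps$. If $D_2$ is bounded this is trivial. Otherwise I would bound it using an exponential Lyapunov estimate for $Y$: apply It\^o's formula to $e^{\kappa|Y|^2}$ (or to $e^{\kappa\sqrt{1+|Y|^2}}$) with $\kappa$ small. Since $\sigma^{(2)}$ is bounded, the dissipative $\eps^{-1}$ drift dominates the $\eps^{-1}$ Itô correction, so $\sup_{s}\mE e^{\kappa|Y_s|^2}$ is bounded uniformly in $\eps$. Jensen's inequality in time then gives
\[
\mE\exp\Big(\eta\int_t^T|Y_r|\,dr\Big)\le \frac{1}{T-t}\int_t^T\mE\, e^{\eta (T-t)|Y_r|}\,dr<\infty .
\]
The same computation, without the $Y$-term, gives the estimate for $\phi^{t,x}$.
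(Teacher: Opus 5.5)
Your proposal is correct and follows essentially the paper's route. Existence and uniqueness come from \cite{C-W22} and Lions--Sznitman, the moment bounds come from the standard It\^o/BDG/Gronwall argument (which the paper simply cites from \cite{C-W22}), and the exponential moments of the local times come from It\^o's formula for $l_1(X_s)$ with $\nabla l_1=\mathbf{n}$ on $\partial D_1$ plus an exponential-martingale bound. The only real difference is your extra exponential Lyapunov step for $\int_t^T|Y_r|\,dr$. It is unnecessary here because in Section~\ref{S:4.2} both $D_1$ and $D_2$ are bounded, so the paper just uses that $b$ is bounded on $\bar D_1\times\bar D_2$.
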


\begin{proof}
 Existence and uniqueness of the strong solution of   \eqref{rsde-bsde2}  and   \eqref{ave_rsde} are established
in    \cite[Theorem 6.1]{C-W22} and in \cite{LS}, respectively.
   
 The first estimate in \eqref{MMXY1} can be proved by arguments similar to \cite[Proposition 3.3]{C-W22} and the other two estimates 
   can be established in a similar way as that for 
 (3.4) and (4.5) in \cite{C-W22}. 

Note that  $l_1\in\mathcal{C}_b^2$ and  $\mathbf{n}(x)= \nabla l_1(x)$ for $x\in\partial D_1$. We get by It\^o's formula,
$$
\begin{aligned}
&l_1(X_s^{\varepsilon,t,x,y})-l_1(x)\\
=&\int_t^s\left \langle \nabla l_1(X_r^{\varepsilon,t,x,y}),b(X_r^{\varepsilon,t,x,y},Y_r^{\varepsilon,t,x,y})\right\rangle dr
+\int_t^s\left\langle\nabla l_1(X_r^{\varepsilon,t,x,y}),\sigma(X_r^{\varepsilon,t,x,y})\right\rangle dW_r\\
&+\frac12\int_t^s\mathrm{tr}(\nabla^2l_1\sigma\sigma^{\top})(X_r^{\varepsilon,t,x,y})dr
{ - } \int_t^s|\nabla l_1(X_r^{\varepsilon,t,x,y})|^2d\phi_r^{\varepsilon,t,x,y}.
\end{aligned}
$$
It  follows that for every $s\in[t,T]$,
$$
\begin{aligned}
\phi_s^{\eps, t, x, y}&=l_1\left(X_s^{\varepsilon,t,x,y}\right)-l_1\left(x\right)  -\int_t^s \langle \nabla l_1\left(X_r^{\varepsilon,t,x,y}\right), b\left(X_r^{\varepsilon,t,x,y}, Y_r^{\varepsilon,t,x,y}\right)\rangle d r \\
& -\frac{1}{2} \int_t^s \mathrm{tr}(\nabla^2l_1\sigma\sigma^{\top})\left(X_r^{\varepsilon,t,x,y}\right) d r 
 -\int_t^s \langle \nabla l_1\left(X_r^{\varepsilon,t,x,y}\right), \sigma\left(X_r^{\varepsilon,t,x,y}\right) d W_r\rangle.
\end{aligned}
$$
By the boundedness of $b, ~\sigma$  on $\bar D_1 \times\bar D_2$, and $\bar D_1$ respectively, we have
$$
\begin{aligned}
\mathbb{E}
e^{\eta \phi_T^{\eps, t, x, y}}&\leq \mathbb{E}\left[e^{\eta  l_1(X_T^{\eps, t, x, y})-\eta l_1(x)-\eta\int_t^T\langle \nabla l_1\left(X_r^{\varepsilon,t,x,y}\right), b\left(X_r^{\varepsilon,t,x,y}, Y_r^{\varepsilon,t,x,y}\right)\rangle d r-\frac{\eta}{2} \int_t^T \mathrm{tr}(\nabla^2l_1\sigma\sigma^{\top})\left(X_r^{\varepsilon,t,x,y}\right) d r}e^{-\eta\int_t^T \langle \nabla l_1\left(X_r^{\varepsilon,t,x,y}\right), \sigma\left(X_r^{\varepsilon,t,x,y}\right) d W_r\rangle}\right]\\
&\leq C_{T,\eta}+ C_T\mathbb{E}\left[e^{-2\eta\int_t^T \langle \nabla l_1\left(X_r^{\varepsilon,t,x,y}\right), \sigma\left(X_r^{\varepsilon,t,x,y}\right) d W_r\rangle}\right]\\
&\leq C_{T,\eta}.
\end{aligned}
$$
 This gives  the first estimate in \eqref{MMphi1}.

Similarly we can prove for any $q\geq1$,
\ce
\mE \Big[ \sup_{s\in[t, T]}\left|X_s^{t, x}\right|^{2q} \Big] \leq C_{q, T}\left(1+|x|^{2 q}\right)
\quad \hbox{and} \quad 
\mE \Big[ e^{\eta \phi^{t, x}_T}\Big]\leq C_{\eta, T} <\infty.
\de
\end{proof}

We shall also need the following estimate concerning the local times $\phi^{t,x}, ~\phi^{\varepsilon, t,x,y}$ of \eqref{ave_rsde} and \eqref{rsde-bsde2}.
\begin{proposition}\label{phiE1}  Divide $[t,T]$ into subintervals with the same length $\Lambda>0$.  For any $q>1$,
\be \label{e:4.21} 
\mE\left[\max_k\left(\phi^{t,x}_{(k+1)\Lambda}
-\phi^{t,x}_{k\Lambda}\right)^{2q}\right]\leq C\Lambda^{q-1}, 
\quad \sup_{\varepsilon >0}\mE\left[\max_k\left(\phi^{\varepsilon,t,x,y}_{(k+1)\Lambda}
-\phi^{\varepsilon,t,x,y}_{k\Lambda}\right)^{2q}\right]\leq C\Lambda^{q-1}.
\ee
\end{proposition}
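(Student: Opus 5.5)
We reuse the It\^o-formula representation of the local time from the proof of Proposition \ref{MME1}. Since $l_1\in\cC_b^2$ and $\mathbf n=\nabla l_1$ with $|\nabla l_1|=1$ on $\partial D_1$, on each subinterval $[k\Lambda,(k+1)\Lambda]$ we have
\begin{align*}
\phi^{\eps,t,x,y}_{(k+1)\Lambda}-\phi^{\eps,t,x,y}_{k\Lambda}
&= l_1(X^{\eps,t,x,y}_{(k+1)\Lambda})-l_1(X^{\eps,t,x,y}_{k\Lambda})
-\int_{k\Lambda}^{(k+1)\Lambda}\Big(\langle\nabla l_1,b\rangle+\tfrac12\mathrm{tr}(\nabla^2 l_1\sigma\sigma^\top)\Big)(X_r^{\eps,t,x,y},Y_r^{\eps,t,x,y})\,dr\\
&\quad-\int_{k\Lambda}^{(k+1)\Lambda}\langle\nabla l_1(X_r^{\eps,t,x,y}),\sigma(X_r^{\eps,t,x,y})\,dW_r\rangle .
\end{align*}
The same identity holds for $\phi^{t,x}$ with $\bar b$ in place of $b$. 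We bound each of the three pieces, raised to the power $2q$, after taking the maximum over $k$. The number of subintervals is $N\approx (T-t)/\Lambda$.

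The drift term is straightforward. Since $\bar D_1$ is bounded and $b$ and $\sigma$ have linear growth (the $Y$-dependence of $b$ being controlled by the uniform moment bound in \eqref{MMXY1}), we get
$$\mE\max_k\Big|\int_{k\Lambda}^{(k+1)\Lambda}\cdots\,dr\Big|^{2q}\le \Lambda^{2q-1}\,\mE\int_t^T|\cdots|^{2q}\,dr\le C\Lambda^{2q-1}\le C\Lambda^{q-1}.$$
For the martingale term, we bound the maximum by the sum and apply BDG on each block. The integrand is bounded because $\sigma$ is bounded on the bounded set $\bar D_1$, so each block contributes $C\Lambda^q$, and summing over $N$ blocks gives $C\Lambda^{q-1}$.

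The increment $l_1(X_{(k+1)\Lambda})-l_1(X_{k\Lambda})$ is where the real difficulty lies. Bounding it crudely by $2\|l_1\|_\infty$ gives only $O(1)$. The Lipschitz bound $|l_1(X_{(k+1)\Lambda})-l_1(X_{k\Lambda})|\le\|\nabla l_1\|_\infty\,|X_{(k+1)\Lambda}-X_{k\Lambda}|$ helps, but the increment of $X$ itself contains the reflection term $\mathbf n\,\Delta_k\phi$, so this is circular. To break the circularity, we use the standard Skorokhod-problem estimate on convex domains. On each block, $X$ solves the Skorokhod problem driven by $x_k+M^k_\cdot$, where
$$M^k_s=\int_{k\Lambda}^s b\,dr+\int_{k\Lambda}^s\sigma\,dW_r .$$
Convexity of $D_1$ then gives
$$|\Delta_k\phi|\le C\sup_{s\in[k\Lambda,(k+1)\Lambda]}|M^k_s|$$
(by the Lipschitz continuity of the Skorokhod map in the sup norm, or by Tanaka's estimate for convex domains with a boundary regularity constant). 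Once this bound is in hand, we do not need the $l_1$ decomposition at all: BDG gives
$$\mE\sup_s|M^k_s|^{2q}\le C(\Lambda^{2q}+\Lambda^{q}),$$
and summing over the $N$ blocks yields $C\Lambda^{q-1}$. This argument is uniform in $\eps$, because the moment bounds in \eqref{MMXY1} and the boundedness of $\sigma$ do not depend on $\eps$. The averaged process $X^{t,x}$ is handled identically, with $\bar b$ in place of $b$.

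The main obstacle is justifying the Skorokhod-map bound $|\Delta_k\phi|\le C\sup|M^k|$ on a convex domain that is only bounded and of class $\cC^2$. If a direct Lipschitz bound on the Skorokhod map is unavailable, the first fallback is to combine the $l_1$-identity with the bound $|\Delta_k X|\le C\sup|M^k|$, which holds for convex domains by the Skorokhod-problem continuity estimate.
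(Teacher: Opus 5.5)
Your main line is the paper's proof. The paper takes the per-block bound $\phi_u-\phi_s\le C\sup_{s\le r_1<r_2\le u}|M_{r_2}-M_{r_1}|$ from Tanaka's Lemma 2.6, bounds the maximum over $k$ by the sum, and applies BDG on each block, exactly as you do. The step you flag, however, is a genuine gap, and your first justification does not close it. Lipschitz continuity of the Skorokhod map controls the net push $\int\mathbf{n}\,d\phi$ (equivalently the displacement of $X$), not the total variation $\phi$. Moreover, a pathwise bound $\Delta_k\phi\le C\sup_s|M^k_s|$ with a deterministic $C$ is false on a bounded domain unless $\sup_s|M^k_s|$ is small. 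An input that sweeps back and forth across $D_1$ $N$ times keeps $\sup_s|M^k_s|\le \mathrm{diam}(D_1)$ but forces $\Delta_k\phi\geq N\,\mathrm{diam}(D_1)$. Any valid pathwise total-variation bound of this type is therefore local (it needs small oscillation). The blocks where $\sup_s|M^k_s|$ is not small then need a separate argument, for instance $\mP(S_k>\delta)\le C_m\Lambda^m$ combined with the moment bounds on $\phi_T$. The same caveat applies to the inequality as it is quoted in the paper.

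Your fallback is the right repair, and it needs no Lipschitz property of the Skorokhod map. On $[k\Lambda,(k+1)\Lambda]$, compare $X$ with the constant solution $X_{k\Lambda}$ of the Skorokhod problem. This is the standard comparison argument for convex domains, which works because $\langle X_r-X_{k\Lambda},\mathbf{n}(X_r)\rangle\le 0$ for $X_r\in\partial D_1$. It gives
\begin{equation*}
|X_{(k+1)\Lambda}-X_{k\Lambda}|^2\le |M^k_{(k+1)\Lambda}|^2+4S_k\,\Delta_k\phi,\qquad S_k:=\sup_{s\in[k\Lambda,(k+1)\Lambda]}|M^k_s| .
\end{equation*}
Set $L:=\|\nabla l_1\|_\infty$ and $N_k:=\int_{k\Lambda}^{(k+1)\Lambda}\langle\nabla l_1(X_r),\sigma(X_r)\,dW_r\rangle$, and insert the comparison bound into your $l_1$ identity. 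This yields $\Delta_k\phi\le LS_k+2L\sqrt{S_k\,\Delta_k\phi}+C\Lambda+|N_k|$. Young's inequality absorbs the square-root term, giving the pathwise bound $\Delta_k\phi\le C(S_k+\Lambda+|N_k|)$ for every block, with no smallness condition. Your max-by-sum argument then goes through:
\begin{equation*}
\mE\max_k(\Delta_k\phi)^{2q}\le C\sum_k\mE\big[S_k^{2q}+\Lambda^{2q}+|N_k|^{2q}\big]\le C\Lambda^{-1}\Lambda^{q}=C\Lambda^{q-1}.
\end{equation*}
This holds uniformly in $\eps$, and the same argument works for $X^{t,x}$ with $\bar b$ in place of $b$.
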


\begin{proof}
By applying \cite[Lemma 2.6]{Ta79}, for the local time $\phi^{t,x}$ of \eqref{ave_rsde}, for any $t\leq s<u\leq T$,
\begin{align*}
\phi^{t,x}_u-\phi^{t,x}_s\leq C\sup_{s\leq r_1<r_2\leq u}\left|\int_{r_1}^{r_2}\bar{b}(X^{t,x}_l)dl+\int_{r_1}^{r_2}\sigma(X^{t,x}_l)dW_l\right|.
\end{align*}
Thus by applying Proposition \ref{MME1},
\begin{align*}
&\mE\left[\max_k \left(\phi^{t,x}_{(k+1)\Lambda}
-\phi^{t,x}_{k\Lambda}\right)^{2q}\right]\\
\leq&C\mE\left[\max_k\left|\int_{k\Lambda}^{(k+1)\Lambda}\bar{b}(X^{t,x}_r)dr\right|^{2q}\right]
+C\mE\left[\max_k\sup_{s\in[k\Lambda,(k+1)\Lambda]}\left|\int_{k\Lambda}^{s}\sigma(X^{t,x}_r)dW_r\right|^{2q}\right]\\
\leq&C\Lambda^{2q}+\sum_kC\mE\left[\left(\int_{k\Lambda}^{(k+1)\Lambda}
\left|\sigma(X^{t,x}_r)\right|^2dr\right)^q\right]\\
\leq&C\Lambda^{q-1}.
\end{align*}
{ This establishes the first inequality in  \eqref{e:4.21}. The second inequality in \eqref{e:4.21}  can be proved in a similar way.}
\end{proof}

\begin{proposition}\label{MMU1} 
{ Eq.\eqref{bsde2} has a unique progressively measurable solution $\left(U^{\varepsilon, t, x, y},  Z^{\varepsilon, t, x, y}\right)$.}
  In addition, for all $\lambda \geqslant 0$,  $(t, x) \in[0, T]\times\bar{D}_1$ and $q\geq1$, 
\be\label{MUZ0} 
\mathbb{E}  \Big[ \sup_{s \in[t, T]} e^{q\lambda \phi_s^{\varepsilon, t, x, y}}\left|U_s^{\varepsilon, t, x, y}\right|^q \Big] 
+E\left(\int_t^T e^{2\lambda \phi_s^{\varepsilon, t, x, y}}\left|Z_s^{\varepsilon, t, x, y}\right|^2 d s\right)^{q/2} <+\infty .
\ee
\end{proposition}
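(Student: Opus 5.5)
Existence and uniqueness will not be proved from scratch. For fixed $\eps>0$, the forward system \eqref{rsde-bsde2} has a unique strong solution by Proposition \ref{MME1}, and $\phi^{\eps,t,x,y}$ is continuous and nondecreasing. Under \textbf{(H.3)}, $f$ is Lipschitz in $u$ and $g$ is monotone in $u$ with linear growth, and $h$ is Lipschitz on the bounded set $\bar D_1$. These are exactly the hypotheses of \cite[Theorem 1.7]{par-zh} and \cite[Proposition 2.5]{par-ras}, provided the terminal value and the driver evaluated at $u=0$ satisfy the exponential-moment integrability conditions. That integrability follows from \eqref{MMXY1} and \eqref{MMphi1}. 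So the plan is to verify these integrability hypotheses and then quote those results for existence, uniqueness and progressive measurability.

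For the a priori bound \eqref{MUZ0}, I will use the standard exponential change of variables. Set $\tilde U_s = e^{\lambda\phi_s}U_s$ and $\tilde Z_s=e^{\lambda\phi_s}Z_s$, writing $\phi=\phi^{\eps,t,x,y}$ for short. Then $(\tilde U,\tilde Z)$ solves a BSDE of the same type. Its terminal value is $e^{\lambda\phi_T}h(X_T)$, its driver is $e^{\lambda\phi}f(X,Y,e^{-\lambda\phi}\tilde U)$, and its boundary term is $e^{\lambda\phi}g(X,e^{-\lambda\phi}\tilde U)-\lambda \tilde U$ integrated against $d\phi$.

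Next I apply Itô's formula to $e^{\mu\phi_s+\beta s}|\tilde U_s|^2$ on $[s,T]$, with $\mu,\beta$ to be chosen. For the $d\phi$-terms, the monotonicity of $g$ gives
\[
2\tilde U\big(e^{\lambda\phi}g(X,e^{-\lambda\phi}\tilde U)\big)\le 2\alpha|\tilde U|^2+|\tilde U|^2+C e^{2\lambda\phi}(1+|X|^2).
\]
Choosing $\mu$ large makes the total $d\phi$-coefficient of $|\tilde U|^2$ nonpositive. The $dr$-terms are handled in the same way using the Lipschitz bound on $f$ in $u$, the linear growth of $f$, and a choice of large $\beta$. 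This yields
\[
\mE\Big[|\tilde U_s|^2+\int_s^T|\tilde Z_r|^2dr\Big]\le C\,\mE\Big[e^{c\phi_T}\big(1+\sup_r|X_r|^2+\sup_r|Y_r|^2\big)\Big].
\]
The right-hand side is finite by Cauchy--Schwarz together with \eqref{MMXY1} and \eqref{MMphi1}.

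To upgrade to general $q$ with the supremum inside the expectation, I will raise to the power $q/2$ and apply the Burkholder--Davis--Gundy inequality to the martingale term $\int \tilde U\tilde Z\,dW$. The BDG term is absorbed through Young's inequality $\sup|\tilde U|\,(\int|\tilde Z|^2)^{1/2}\le \delta\sup|\tilde U|^2+\delta^{-1}\int|\tilde Z|^2$. The main obstacle is the term $\int e^{2\lambda\phi}|X|^2\,d\phi$ and the mixed moments of $\phi_T$. These are controlled by $\phi_T\le e^{\phi_T}$, Hölder's inequality, and the fact that \eqref{MMphi1} holds for every $\eta>0$. With these, all constants stay finite, which gives \eqref{MUZ0}. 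Because every bound depends only on the uniform estimates of Proposition \ref{MME1}, the resulting constant is in fact independent of $\eps$.
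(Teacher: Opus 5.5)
Your proposal follows the same route as the paper. Existence and uniqueness are quoted from Pardoux--Zhang and Pardoux--Rascanu, and \eqref{MUZ0} comes from the standard exponentially weighted It\^o/BDG a priori estimate, which the paper simply imports from Theorem 5.69 of Pardoux--Rascanu's book and feeds with the moment and exponential local-time bounds of Proposition \ref{MME1}; you write that argument out explicitly.

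One small caveat concerns your closing claim that the constant is independent of $\eps$. Proposition \ref{MME1} only controls $\sup_s\mE|Y_s|^{2q}$, not $\mE\sup_s|Y_s|^{2q}$, so that claim should rest on the boundedness of $\bar D_2$, or on keeping $|Y_r|^2$ inside the $dr$-integral and using H\"older. It should not rest on the $\sup_r|Y_r|^2$ bound you wrote.
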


\begin{proof} The existence and uniqueness of Eq.\eqref{bsde2} can be proved by following arguments similar to \cite[Theorem 1.7] {par-zh} and  \cite[Proposition 2.5]{par-ras}. { By adapting an argument from \cite[Theorem 5.69]{P-R14},  we get} \eqref{MUZ0}  from Proposition \ref{MME1}.
\end{proof}

\begin{proposition}\label{MMU2}
For any $h>0$ and $s \in[t, T-h]$, and { any } $\lambda \geq 1$, 
\begin{eqnarray*}
\mathbb{E} \Big[ e^{\lambda\left(s+\phi_s^{\varepsilon,t,x,y}\right)}\left|U_s^{\varepsilon, t, x, y}-U_{s+h}^{\varepsilon, t, x, y}\right|^2 \Big]
+\mathbb{E} \int_s^{s+h} e^{\lambda\left(r+\phi_r^{\varepsilon,t,x,y}\right)}\left|Z_r^{\varepsilon, t, x, y}\right|^2 d r 
\leqslant Ch^{\frac12}.
\end{eqnarray*}
\end{proposition}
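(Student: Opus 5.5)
We apply It\^o's formula to $e^{\lambda(r+\phi_r)}|U_r-U_{s+h}|^2$ on $r\in[s,s+h]$, with $\phi_r=\phi^{\varepsilon,t,x,y}_r$ and $U_r=U^{\varepsilon,t,x,y}_r$. Set $\Gamma_r:=e^{\lambda(r+\phi_r)}$ and $V_r:=U_r-U_{s+h}$. On $[s,s+h]$ the BSDE \eqref{bsde2} gives
\[
V_r=\int_r^{s+h} f(X_l,Y_l,U_l)\,dl+\int_r^{s+h} g(X_l,U_l)\,d\phi_l-\int_r^{s+h} Z_l\,dW_l ,
\]
so $V$ solves a BSDE on $[s,s+h]$ with terminal value $0$. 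Itô's formula then yields
\[
\Gamma_s|V_s|^2+\int_s^{s+h}\Gamma_r|Z_r|^2dr+\lambda\int_s^{s+h}\Gamma_r|V_r|^2(dr+d\phi_r)
=2\int_s^{s+h}\Gamma_r\langle V_r,f_r\rangle dr+2\int_s^{s+h}\Gamma_r\langle V_r,g_r\rangle d\phi_r-M,
\]
where $M=2\int_s^{s+h}\Gamma_r\langle V_r,Z_rdW_r\rangle$. By \eqref{MUZ0} and \eqref{MMphi1}, the local martingale $M$ is a true martingale, so its expectation vanishes.

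We bound the two drift terms separately. For the $dr$-term, the linear growth of $f$ in (H.3) gives $|f_r|\le C(1+|X_r|+|Y_r|+|U_r|)$. Using $2ab\le a^2+b^2$, this term is at most $\int\Gamma_r|V_r|^2dr+\int\Gamma_r|f_r|^2dr$. The first piece is absorbed into the $\lambda\int\Gamma|V|^2dr$ term since $\lambda\ge1$. The second piece is bounded via Hölder's inequality, \eqref{MMphi1} (for the exponential weight), \eqref{MMXY1} and \eqref{MUZ0}, giving a bound of order $Ch$.

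For the $d\phi$-term we cannot use monotonicity, because $g_r$ is evaluated at $U_r$ while $V_r=U_r-U_{s+h}$. We instead write $2\langle V_r,g_r\rangle\le|V_r|^2+|g_r|^2$, absorb the first part into $\lambda\int\Gamma|V|^2d\phi$, and are left with $\mathbb{E}\int_s^{s+h}\Gamma_r|g_r|^2d\phi_r$. Since $|g_r|\le C(1+|X_r|+|U_r|)$, this is at most
\[
\mathbb{E}\Big[\sup_r\Gamma_r(1+|X_r|+|U_r|)^2(\phi_{s+h}-\phi_s)\Big]\le C\Big(\mathbb{E}(\phi_{s+h}-\phi_s)^2\Big)^{1/2}
\]
by Cauchy--Schwarz, \eqref{MUZ0}, \eqref{MMXY1} and \eqref{MMphi1}. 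It remains to show $\mathbb{E}(\phi_{s+h}-\phi_s)^2\le Ch$. The Skorokhod-map bound from \cite[Lemma 2.6]{Ta79}, used in the proof of Proposition \ref{phiE1}, gives
\[
\phi_{s+h}-\phi_s\le C\sup_{s\le r_1<r_2\le s+h}\Big|\int_{r_1}^{r_2}b\,dl+\int_{r_1}^{r_2}\sigma\,dW\Big|.
\]
Since $b$ and $\sigma$ are bounded on the bounded domains, BDG then yields $\mathbb{E}(\phi_{s+h}-\phi_s)^2\le C(h^2+h)$. The $d\phi$-term is therefore $\le Ch^{1/2}$. It is the dominant contribution and is the source of the exponent $\tfrac12$.

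Collecting the estimates and using $h\le T$ so that $h\le Ch^{1/2}$, we obtain the claimed bound. The main obstacle is the local-time term. It is not of order $h$, so one must accept the $h^{1/2}$ rate from $(\mathbb{E}(\phi_{s+h}-\phi_s)^2)^{1/2}$. Controlling the exponential weight $e^{\lambda\phi}$ with $\lambda\ge1$ requires the exponential moments \eqref{MMphi1} uniformly in $\varepsilon$, together with Hölder's inequality with conjugate exponents, so that the moment bounds from \eqref{MUZ0} are applied with a larger $\lambda$.
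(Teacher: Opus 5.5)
Your computation follows the same template as the paper's proof: It\^o's formula for the weighted squared increment on $[s,s+h]$, Young's inequality, absorption of the $|V|^2$ terms via $\lambda\ge 1$, moment bounds, and the local-time increment as the source of $h^{1/2}$. Your treatment of the $dr$- and $d\phi$-terms, including $\mathbb{E}(\phi_{s+h}-\phi_s)^2\le Ch$ via Tanaka's lemma, is fine. The step that fails is $\mathbb{E}M=0$. For $r<s+h$ the integrand $\Gamma_rV_r=\Gamma_r(U_r-U_{s+h})$ is not $\mathcal{F}_r$-measurable, so $M$ is not an It\^o integral, let alone a martingale. Your identity holds pathwise only if you derive it for $\Gamma_r|U_r-c|^2$ with a deterministic $c$ and then substitute $c=U_{s+h}$. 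That forces
\begin{equation*}
M=2\int_s^{s+h}\Gamma_r\langle U_r,Z_r\,dW_r\rangle-2\langle U_{s+h},N\rangle,\qquad N:=\int_s^{s+h}\Gamma_rZ_r\,dW_r .
\end{equation*}
The first term has mean zero. For the second, insert $U_{s+h}=U_s-R+\int_s^{s+h}Z_r\,dW_r$, where $R:=\int_s^{s+h}f_r\,dr+\int_s^{s+h}g_r\,d\phi_r$; this gives $-\mathbb{E}M=2\,\mathbb{E}\int_s^{s+h}\Gamma_r|Z_r|^2dr-2\,\mathbb{E}\langle R,N\rangle$. After taking expectations your identity therefore reads
\begin{equation*}
\mathbb{E}\big[\Gamma_s|V_s|^2\big]+\lambda\,\mathbb{E}\int_s^{s+h}\Gamma_r|V_r|^2(dr+d\phi_r)=\mathbb{E}\int_s^{s+h}\Gamma_r|Z_r|^2dr+\big(\text{terms involving } f,\ g,\ R\big).
\end{equation*}
The $Z$-energy has moved to the right-hand side, so neither quantity in the proposition is controlled. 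This is the same information as the legitimate (adapted) forward It\^o formula for $\Gamma_r|U_r-U_s|^2$: the increment of $U$ is bounded by the $Z$-energy on the interval, not conversely.

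What is missing is an independent short-time bound on $Z$, uniform in $\varepsilon$. For instance, you would need $\mathbb{E}\int_s^{s+h}e^{q(r+\phi_r)}|Z_r|^2dr\le C_q h$ uniformly in $s$ and $\varepsilon$. In this Markovian setting one would try to get it from a uniform-in-$\varepsilon$ Lipschitz bound on $x\mapsto u^\varepsilon(r,x,y)$, since formally $Z_r=(\nabla_xu^\varepsilon\,\sigma)(r,X_r,Y_r)$; alternatively, a Zhang-type $L^2$ time-regularity result for $Z$ would do. Square integrability of $Z$ alone gives $\mathbb{E}\int_s^{s+h}|Z_r|^2dr\to 0$ but with no rate. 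Once such a bound is available, the $U$-part of the claim follows from the corrected identity above using the estimates you already have; the cross term is handled by $|\mathbb{E}\langle R,N\rangle|\le(\mathbb{E}|R|^2)^{1/2}(\mathbb{E}\int_s^{s+h}\Gamma_r^2|Z_r|^2dr)^{1/2}$. The paper's own proof does not supply this ingredient either. It performs the same computation and drops the stochastic integral in the same way. Its integrands are written with $U_r-U_s$, which would be adapted, but then the displayed identity is not what It\^o's formula produces: the forward formula for $\Gamma_r|U_r-U_s|^2$ places $\Gamma_{s+h}|U_{s+h}-U_s|^2$ and $\int_s^{s+h}\Gamma_r|Z_r|^2dr$ on opposite sides.
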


\begin{proof}
By It\^o's formula, 
\begin{align*}
&e^{\lambda\left(\phi_s^{\varepsilon, t, x, y}+s\right)}\left|U_s^{\varepsilon, t, x, y}-U_{s+h}^{\varepsilon, t, x, y}\right|^2 +\lambda \int_s^{s+h}e^{\lambda\left(\phi_r^{\varepsilon, t, x, y}+r\right)}\left|U_r^{\varepsilon, t, x, y}-U_{s}^{\varepsilon, t, x, y}\right|^2 \left(d r+d \phi_r^{\varepsilon, t, x, y}\right) \\
= &2 \int_s^{s+h} e^{\lambda\left(\phi_r^{\varepsilon, t, x, y}+r\right)}\langle U_r^{\varepsilon, t, x, y}-U_{s}^{\varepsilon, t, x, y}, f\left(X_r^{\varepsilon, t, x, y}, Y_r^{\varepsilon, t, x, y}, U_r^{\varepsilon, t, x, y}\right)\rangle d r 
\\
+& \int_s^{s+h} e^{\lambda\left(\phi_{r}^{\varepsilon, t, x, y}+r\right)}\langle U_r^{\varepsilon, t, x, y}-U_{s}^{\varepsilon, t, x, y},g\left(X_r^{\varepsilon, t, x, y}, U_r^{\varepsilon, t, x, y}\right)\rangle d \phi_r^{\varepsilon, t, x, y} \\
-&2 \int_s^{s+h} e^{\lambda\left(\phi_{r}^{\varepsilon, t, x, y}+r\right)}\langle U_r^{\varepsilon, t, x, y}-U_{s}^{\varepsilon, t, x, y}, Z_r^{\varepsilon, t, x, y} dW_r\rangle - \int_s^{s+h} e^{\lambda\left(\phi_{r}^{\varepsilon, t, x, y}+r\right)}\left|Z_r^{\varepsilon, t, x, y}\right|^2 d r\\
 \leq& 2\int_s^{s+h}e^{\lambda\left(\phi_{r}^{\varepsilon, t, x, y}+r\right)}\left|U_r^{\varepsilon, t, x, y}-U_{s}^{\varepsilon, t, x, y}\right|C_f\left(1+\left|X_r^{\varepsilon, t, x, y}\right|+\left|Y_r^{\varepsilon, t, x, y}\right|+\left|U_r^{\varepsilon, t, x, y}\right|\right)dr \\
+& 2 \int_s^{s+h} e^{\lambda\left(\phi_{r}^{\varepsilon, t, x, y}+r\right)}\left|U_r^{\varepsilon, t, x, y}-U_{s}^{\varepsilon, t, x, y}\right|\left|g\left(X_r^{\varepsilon, t, x, y}, U_r^{\varepsilon, t, x, y}\right)\right| d \phi_r^{\varepsilon, t, x, y}\\
-& 2 \int_s^{s+h} e^{\lambda\left(\phi_{r}^{\varepsilon, t, x, y}+r\right)}\langle U_r^{\varepsilon, t, x, y}-U_{s}^{\varepsilon, t, x, y}, Z_r^{\varepsilon, t, x, y}dW_r\rangle -\int_{s}^{s+h} e^{\lambda\left(\phi_{r}^{\varepsilon, t, x, y}+r\right)}\left|Z_r^{\varepsilon, t, x, y}\right|^2 d r \\
 \leq&\int_s^{s+h}e^{\lambda\left(\phi_{r}^{\varepsilon, t, x, y}+r\right)}\left|U_r^{\varepsilon, t, x, y}-U_{s}^{\varepsilon, t, x, y}\right|^2\left(d r+d \phi_r^{\varepsilon}\right) +C\int_s^{s+h}e^{\lambda\left(\phi_{r}^{\varepsilon, t, x, y}+r\right)}\left(1+\left|X_r^{\varepsilon, t, x, y}\right|^2+\left|Y_r^{\varepsilon, t, x, y}\right|^2+\left|U_r^{\varepsilon, t, x, y}\right|^2\right)dr\\
-&2 \int_s^{s+h} e^{\lambda\left(\phi_{r}^{\varepsilon, t, x, y}+r\right)}\langle U_r^{\varepsilon, t, x, y}-U_{s}^{\varepsilon, t, x, y}, Z_r^{\varepsilon, t, x, y} dW_r \rangle-\int_{s}^{s+h} e^{\lambda\left(\phi_{r}^{\varepsilon, t, x, y}+r\right)}\left|Z_r^{\varepsilon, t, x, y}\right|^2 d r\\
&+\int_s^{s+h} e^{\lambda\left(\phi_{r}^{\varepsilon, t, x, y}+r\right)}\left|g\left(X_r^{\varepsilon, t, x, y}, U_r^{\varepsilon, t, x, y}\right)\right|^2 d \phi_r^{\varepsilon, t, x, y}.
\end{align*}
Note that $\lambda\geq1$, we have 
 from the above that 
\begin{eqnarray*}
&&e^{\lambda\left(\phi_s^{\varepsilon, t, x, y}+s\right)}\left|U_s^{\varepsilon, t, x, y}-U_{s+h}^{\varepsilon, t, x, y}\right|^2 
  + \int_{s}^{s+h} e^{\lambda\left(\phi_{r}^{\varepsilon, t, x, y}+r\right)}\left|Z_r^{\varepsilon, t, x, y}\right|^2 d r  \\
& \leq & C\int_s^{s+h}e^{\lambda\left(\phi_{r}^{\varepsilon, t, x, y}+r\right)}\left(1+\left|X_r^{\varepsilon, t, x, y}\right|^2+\left|Y_r^{\varepsilon, t, x, y}\right|^2+\left|U_r^{\varepsilon, t, x, y}\right|^2\right)dr \\
&& -2 \int_s^{s+h} e^{\lambda\left(\phi_{r}^{\varepsilon, t, x, y}+r\right)}\langle U_r^{\varepsilon, t, x, y}-U_{s}^{\varepsilon, t, x, y}, Z_r^{\varepsilon, t, x, y} 
dW_r \rangle 
+\int_s^{s+h} e^{\lambda\left(\phi_{r}^{\varepsilon, t, x, y}+r\right)}\left|g\left(X_r^{\varepsilon, t, x, y}, U_r^{\varepsilon, t, x, y}\right)\right|^2 d \phi_r^{\varepsilon, t, x, y}.
\end{eqnarray*}
Then by Proposition \ref{MME1}-\ref{MMU1}, we have
\begin{align*}
&\mathbb{E} \left[ e^{\lambda\left(\phi_s^{\varepsilon, t, x, y}+s\right)}\left|U_s^{\varepsilon, t, x, y}-U_{s+h}^{\varepsilon, t, x, y}\right|^2 \right] 
+\mathbb{E} \int_{s}^{s+h} e^{\lambda\left(\phi_{r}^{\varepsilon, t, x, y}+r\right)}\left|Z_r^{\varepsilon, t, x, y}\right|^2 d r\\
\leq& C { \mathbb{E} }
\int_s^{s+h}e^{\lambda\left(\phi_{r}^{\varepsilon, t, x, y}+r\right)}\left(1+\left|X_r^{\varepsilon, t, x, y}\right|^2+\left|Y_r^{\varepsilon, t, x, y}\right|^2+\left|U_r^{\varepsilon, t, x, y}\right|^2\right)dr\\
&+C  { \mathbb{E} } \int_s^{s+h}e^{\lambda\left(\phi_{r}^{\varepsilon, t, x, y}+r\right)}\left(1+|X_r^{\varepsilon, t, x, y}|^2+|U_r^{\varepsilon, t, x, y}|^2\right)d\phi_r^{\varepsilon, t, x, y} \\
\leq& Ch^{\frac12}.
\end{align*}
This establishes the proposition. 
\end{proof}

Now we are in the position to prove Theorem \ref{HMbsde21}.

\medskip

\noindent{\it Proof of Theorem \ref{HMbsde21}.} 
 Similar to Proposition \ref{MMU1}-\ref{MMU2},  BSDE
 \eqref{bsdeav2} admits a unique strong solution {$(U^{t, x}, Z^{t, x})$ }, and   that for any $\lambda>0$ and $q\geq1$,
\begin{equation}\label{UZ10}
\mE \left[ \sup_{s \in[t, T]} e^{q\lambda \phi_s^{\varepsilon, t, x, y}}\left|U_s^{t, x}\right|^q \right]
+\mE 
{ \left[  \left(\int_t^T e^{2\lambda \phi_s^{\varepsilon, t, x, y}}\left|Z_s^{t, x}\right|^2 d s\right)^{q/2 } \right] }
<+\infty.
\end{equation}
Moreover, for any $\lambda>0$, 
\begin{equation}\label{UZ2}
\mathbb{E} \left[ e^{\lambda\left(s+\phi_s^{\varepsilon,t,x,y}\right)}\left|U_s^{t, x}-U_{s+h}^{t, x}\right|^2 \right]
+\mathbb{E} \int_s^{s+h} e^{\lambda\left(r+\phi_r^{\varepsilon,t,x,y}\right)}\left|Z_r^{t, x}\right|^2 dr 
\leq Ch^{\frac12}.
\end{equation}

     Divide $[t,T]$ into subintervals with the same length $\Lambda$. Denote $[s]:=k\Lambda$ for $s\in[k\Lambda,(k+1)\Lambda)$ for $k=0, 1, 2, \dots, [T/\Lambda]$. 
     { Let $\hat{Y}^{\e, y}$ be the solution determined by:
 \ce
\begin{cases}
  d\hat{Y}_s^{\e, y}=\e^{-1} b^{(2)}(X_{k\Lambda}^{\varepsilon, t, x, y},\hat{Y}_s^{\e, y})ds+\e^{-1/2}\sigma^{(2)}(X_{k\Lambda}^{\varepsilon, t, x, y},\hat{Y}_s^{\e, y})dB_s +\mathbf{n}^{(2)}(\hat{Y}_s^{\e, y})d\hat{\phi}^{\e,y}_s
  \quad \hbox{for } s\in [k\Lambda, (k+1)\Lambda),\medskip \\
 \hat{Y}_{t}^{\e, y}=y, \quad \hat{Y}_{(k+1)\Lambda}^{\e, y}=\lim_{s\to (k+1)\Lambda-}\hat{Y}_s^{\e, y}.
\end{cases}
\de
Then by \cite[Proposition 4.1]{C-W22}, 
there exists a constant $C$ independent of $\e, ~\Lambda$ such that for all $s\in[t,T]$, $x\in \bar D_1$ and $y\in \bar D_2$, 
\be\label{YhatY}
 \mE \left[ | \hat{Y}_s^{\e, y}-{Y}_s^{\varepsilon, t, x, y}|^2 \right] \leq C\Lambda.
 \ee 
It then follows from   \cite [Proposition  3.4] {C-W22} that 
\be\label{YhatY2}
\sup_{\e}\sup_{s\in[t,T]}\mE \left[ |\hat{Y}_s^{\e, y}|^2 \right]\leq C(1+|y|^2).
\ee
Following the arguments similar to \cite[(4.8)]{C-W22}, we get for any $r, s\in[t,T]$, $x\in \bar D_1$ and $y\in \bar D_2$, for any $q\geq1$,
\be\label{XbarXc}
\sup_{\varepsilon}\mE \left[  | X_s^{\varepsilon, t, x, y}-{X}_r^{\varepsilon, t, x, y}|^{2q} \right] \leq C|r-s|^{q}.
\ee

 For  $\eta\geq\max\{-2\alpha, 2C_0+1\}$,
applying It\^o's formula, for $s\in[t,T]$, we have:
\begin{align*}
&e^{\eta(\phi_s^{\varepsilon, t, x, y}+s)} |U^{\varepsilon,t, x, y}_{s}-U_s^{t,x}|^{2}+\eta\int_{s}^{T}e^{\eta(\phi_r^{\varepsilon, t, x, y}+r)} |{U}^{\varepsilon, t, x, y}_{r}
- U^{t,x}_{r}|^{2}(d\phi_r^{\varepsilon, t, x, y}+dr)\\
=&e^{\eta(\phi_T^{\varepsilon, t, x, y}+T)}|h(X^{\varepsilon, t, x, y}_{T})-h(X^{t,x}_{T})|^{2}\\
&+2\int_{s}^{T}e^{\eta(\phi_r^{\varepsilon, t, x, y}+r)}\langle  U^{\varepsilon, t, x, y}_{r}-U^{t,x}_{r},f(X^{\varepsilon, t, x, y}_{r},Y^{\varepsilon, t, x, y}_{r},{U}^{\varepsilon, t, x, y}_{r})-\bar{f}(X^{t,x}_{r},U^{t,x}_{r})\rangle dr\\
&+2\int_{s}^{T}e^{\eta(\phi_r^{\varepsilon, t, x, y}+r)}\langle  U^{\varepsilon, t, x, y}_{r}-U^{t,x}_{r},g(X^{\varepsilon, t, x, y}_{r},{U}^{\varepsilon, t, x, y}_{r})-g(X^{t,x}_{r},U^{t,x}_{r})\rangle d\phi^{\varepsilon, t, x, y}_{r}\\
&+2\int_{s}^{T}e^{\eta(\phi_r^{\varepsilon, t, x, y}+r)}\langle  U^{\varepsilon, t, x, y}_{r}-U^{t,x}_{r},g(X^{t,x}_{r},U^{t,x}_{r})\rangle (d\phi^{\varepsilon, t, x, y}_{r}-d\phi^{t, x}_{r})\\
&+2\int_{s}^{T}e^{\eta(\phi_r^{\varepsilon, t, x, y}+r)}\langle U^{\varepsilon,t, x, y}_{r}-U^{t,x}_{r},({Z}^{\varepsilon, t, x, y}_{r}-Z^{t,x}_{r})dW_{r}\rangle-\int_{s}^{T}e^{\eta(\phi_r^{\varepsilon, t, x, y}+r)} |{Z}^{\varepsilon, t, x, y}_{r}
- Z^{t,x}_{r}|^{2}dr\\
=:& \sum_{i=1}^6J_i.
\end{align*}
By Theorem \ref{HM4} and Proposition \ref{MME1}, 
$$
\begin{aligned}
\mathbb{E}J_1\leq& C_3\mE e^{\eta(\phi_T^{\varepsilon, t, x, y}+T)}|X^{\varepsilon, t, x, y}_{T}-X^{t,x}_{T}|^{2}\\
\leq &C_3\left\{\mE e^{2\eta(\phi_T^{\varepsilon, t, x, y}+T)}|X^{\varepsilon, t, x, y}_{T}-X^{t,x}_{T}|^{2}\right\}^{1/2}\left\{\mE |X^{\varepsilon, t, x, y}_{T}-X^{t,x}_{T}|^{2}\right\}^{1/2}
\leq C_T\varepsilon^{1/4}.
\end{aligned}
$$
By \eqref{MUZ0}-\eqref{UZ1},  $\mathbb{E}J_5=0$. By the condition on $g$ and Proposition \ref{MME1}-\ref{MMU1},
$$
\begin{aligned}
\mathbb{E}J_3\leq&-2\alpha\mE\int_{s}^{T}e^{\eta(\phi_r^{\varepsilon, t, x, y}+r)}\left|U^{\varepsilon,t, x, y}_{r}-U^{t,x}_{r}\right|^2d\phi_r^{\varepsilon, t, x, y}\\
&+2\mE\int_{s}^{T}e^{\eta(\phi_r^{\varepsilon, t, x, y}+r)}\left|U^{\varepsilon,t, x, y}_{r}-U^{t,x}_{r}\right|\left|X^{\varepsilon, t, x, y}_{r}-X^{t,x}_{r}\right|d\phi_r^{\varepsilon, t, x, y}\\
\leq&-2\alpha\mE\int_{s}^{T}e^{\eta(\phi_r^{\varepsilon, t, x, y}+r)}\left|U^{\varepsilon,t, x, y}_{r}-U^{t,x}_{r}\right|^2d\phi_r^{\varepsilon, t, x, y}\\
&+C\left\{\mE \Big[ \sup_{r\in[t,T]}\left|X^{\varepsilon, t, x, y}_{r}-X^{t,x}_{r}\right|^2 \Big] \right\}^{1/2}\left\{\mE\left(\int_{s}^{T}e^{\eta(\phi_r^{\varepsilon, t, x, y}+r)}\left|U^{\varepsilon,t, x, y}_{r}-U^{t,x}_{r}\right|d\phi_r^{\varepsilon, t, x, y}\right)^2\right\}^{1/2}\\
\leq&-2\alpha\mE\int_{s}^{T}e^{\eta(\phi_r^{\varepsilon, t, x, y}+r)}\left|U^{\varepsilon,t, x, y}_{r}-U^{t,x}_{r}\right|^2d\phi_r^{\varepsilon, t, x, y}+C\varepsilon^{1/4},
\end{aligned}$$
where Theorem \ref{HM4} is applied in last inequality. 
Note that for $J_2$,
\begin{equation*}
			\begin{aligned}
J_2=&2\int_{s}^{T}e^{\eta(\phi_r^{\varepsilon, t, x, y}+r)}\langle {U}^{\varepsilon,t, x, y}_{r}-U^{t,x}_r,f(X^{\varepsilon,t, x, y}_{r},Y^{\varepsilon,t, x, y}_{r},{U}^{\varepsilon,t, x, y}_{r})-\bar{f}(X^{t,x}_r,U^{t,x}_r)\rangle dr\\
\leq& 2\int_{s}^{T}e^{\eta(\phi_r^{\varepsilon, t, x, y}+r)}|U^{\varepsilon,t, x, y}_{r}-U^{t,x}_{r}|\left|f(X^{\varepsilon,t, x, y}_{r},Y^{\varepsilon,t, x, y}_{r},{U}^{\varepsilon,t, x, y}_{r})-f(X^{\varepsilon,t, x, y}_{[r]},\hat{Y}^{\varepsilon,y}_{r},{U}^{\varepsilon,t, x, y}_{[r]})\right|dr\\
&+2\int_{s}^{T}e^{\eta(\phi_r^{\varepsilon, t, x, y}+r)}\langle {U}^{\varepsilon,t, x, y}_{r}-U^{t,x}_r,\bar{f}(X^{\varepsilon,t, x, y}_{[r]},{U}^{\varepsilon,t, x, y}_{r})-\bar{f}(X^{\varepsilon,t, x, y}_{[r]},{U}^{t, x}_{r})\rangle dr\\
&+2\int_{s}^{T}e^{\eta(\phi_r^{\varepsilon, t, x, y}+r)}| {U}^{\varepsilon,t, x, y}_{r}-U^{t,x}_r|\left[\left|\bar{f}(X^{\varepsilon,t, x, y}_{[r]},{U}^{t, x}_{r})-\bar{f}(X^{t, x}_{r},{U}^{t, x}_{r})\right|+\left|\bar{f}(X^{\varepsilon,t, x, y}_{[r]},{U}^{\varepsilon,t, x, y}_{r})-\bar{f}(X^{\varepsilon,t, x, y}_{[r]},{U}^{\varepsilon,t, x, y}_{[r]})\right|\right]dr\\
&+2\int_{s}^{T}e^{\eta(\phi_r^{\varepsilon, t, x, y}+r)}\left|{U}^{\varepsilon,t, x, y}_{r}-U^{t,x}_r-U^{\varepsilon,t, x, y}_{[r]}+U^{t,x}_{[r]}\right|\left|{f}(X^{\varepsilon,t, x, y}_{[r]}, \hat{Y}_{r}^{\varepsilon, y}, {U}^{\varepsilon,t, x, y}_{[r]})-\bar{f}(X^{\varepsilon,t, x, y}_{[r]},U^{\varepsilon,t, x, y}_{[r]})\right|dr\\
&+2\int_{s}^{T}e^{\eta(\phi_r^{\varepsilon, t, x, y}+r)}\langle {U}^{\varepsilon,t, x, y}_{[r]}-U^{t,x}_{[r]},f(X^{\varepsilon,t, x, y}_{[r]},\hat{Y}_{r}^{\varepsilon,y}, U^{\varepsilon,t, x, y}_{[r]})-\bar{f}(X^{\varepsilon,t, x, y}_{[r]},{U}^{\varepsilon,t, x, y}_{[r]})\rangle dr\\
=:&\sum_{i=1}^5 J_{2,i}(s).
\end{aligned}\end{equation*}
By  Proposition \ref{MME1}-\ref{MMU2}, and \eqref{UZ10}-\eqref{YhatY2},
\ce
\mE J_{2,1}(s)&\leq&C\mE\left[\int_{s}^{T}e^{\eta(\phi_r^{\varepsilon, t, x, y}+r)} \big|U^{\varepsilon,t, x, y}_{r}-U^{t,x}_{r} \big| \,
\left(\left|X^{\varepsilon,t, x, y}_{r}-X^{\varepsilon,t, x, y}_{[r]}\right|+\left|Y^{\varepsilon,t, x, y}_{r}-\hat{Y}^{\varepsilon,y}_{r}\right|+\left|{U}^{\varepsilon,t, x, y}_{r}-{U}^{\varepsilon,t, x, y}_{[r]}\right|\right)dr\right]\\
&\leq& C\left\{\mE\int_{s}^{T}e^{2\eta(\phi_r^{\varepsilon, t, x, y}+r)} \left|U^{\varepsilon,t, x, y}_{r}-U^{t,x}_{r} \right|^2dr\right\}^{\frac12}
\left\{\mE\int_{s}^{T}\left(\left|X^{\varepsilon,t, x, y}_{r}-X^{\varepsilon,t, x, y}_{[r]}\right|^2+|Y^{\varepsilon,t, x, y}_{r}-\hat{Y}^{\varepsilon,y}_{r}|^2\right)dr\right\}^{\frac12}\\
&&+C\mE\int_{s}^{T}e^{\eta(\phi_r^{\varepsilon, t, x, y}+r)}\left|{U}^{\varepsilon,t, x, y}_{r}-{U}^{\varepsilon,t, x, y}_{[r]}\right|^2dr
+\frac12\mE\int_{s}^{T}e^{\eta(\phi_r^{\varepsilon, t, x, y}+r)} \left|U^{\varepsilon,t, x, y}_{r}-U^{t,x}_{r} \right|^2dr\\
&\leq& C\Lambda^{\frac12}+\frac12\mE\int_{s}^{T}e^{\eta(\phi_r^{\varepsilon, t, x, y}+r)} \left|U^{\varepsilon,t, x, y}_{r}-U^{t,x}_{r} \right|^2dr.\de
By the condition of $f$,
\ce
\mE J_{2,2}(s)\leq 2C_0\mE\int_{s}^{T}e^{\eta(\phi_r^{\varepsilon, t, x, y}+r)}\left|U^{\varepsilon,t, x, y}_{r}-U^{t, x}_{r}\right|^2dr.
\de
By  Theorem \ref{HM4}, Proposition \ref{MMU1}-\ref{MMU2}, and  \eqref{UZ10}-\eqref{YhatY2},
\ce
\mE J_{2,3}(s)&\leq&C\mE\left[\int_{s}^{T}e^{\eta(\phi_r^{\varepsilon, t, x, y}+r)} \big|U^{\varepsilon,t, x, y}_{r}-U^{t,x}_{r} \big| \,
\left(\left|X^{\varepsilon,t, x, y}_{[r]}-X^{t, x}_{r}\right|+\left|{U}^{\varepsilon,t, x, y}_{r}-{U}^{\varepsilon,t, x, y}_{[r]}\right|\right)dr\right]\\
&\leq&C\left\{\mE\int_{s}^{T}e^{2\eta(\phi_r^{\varepsilon, t, x, y}+r)} \left|U^{\varepsilon,t, x, y}_{r}-U^{t,x}_{r} \right|^2dr\right\}^{\frac12}
{\left\{\mE\int_{s}^{T}\left|X^{\varepsilon,t, x, y}_{[r]}-X^{t, x}_{r}\right|^2dr\right\}^{\frac12}}\\
&&{ +C\mE\int_{s}^{T}e^{\eta(\phi_r^{\varepsilon, t, x, y}+r)}\left|{U}^{\varepsilon,t, x, y}_{r}-{U}^{\varepsilon,t, x, y}_{[r]}\right|^2dr
+\frac12\mE\int_{s}^{T}e^{\eta(\phi_r^{\varepsilon, t, x, y}+r)} \left|U^{\varepsilon,t, x, y}_{r}-U^{t,x}_{r} \right|^2dr}\\
&\leq& { C\Lambda^{\frac12}+C\varepsilon^{\frac14}+\frac12\mE\int_{s}^{T}e^{\eta(\phi_r^{\varepsilon, t, x, y}+r)} \left|U^{\varepsilon,t, x, y}_{r}-U^{t,x}_{r} \right|^2dr}.\\
\mE J_{2,4}(s)&\leq&C\mE\int_{s}^{T}e^{\eta(\phi_r^{\varepsilon, t, x, y}+r)}\left(\left|{U}^{t,x}_{r}-U^{t,x}_{[r]}\right|+\left|{U}^{\e,t, x, y}_{[r]}-U^{\e,t, x, y}_r\right|\right)\left(1+\left|X^{\e,t, x, y}_{[r]}\right|+\left|\hat{Y}^{\e, y}_r\right|+\left|U^{\varepsilon, t, x, y}_{[r]}\right|\right)dr\\
&\leq&C\Lambda^{\frac14}.
\de
For the term $J_{2,5}$,
\begin{align*}
\mathbb{E}J_{2,5}(s)&=2\mE\int_{s}^{T}e^{\eta(\phi_r^{\varepsilon, t, x, y}+r)}\langle U^{\varepsilon,t, x, y}_{[r]}-U^{t,x}_{[r]},f(X^{\varepsilon,t, x, y}_{[r]},\hat{Y}^{\varepsilon,y}_{r},{U}^{\varepsilon,t, x, y}_{[r]})-\bar{f}(X^{\varepsilon,t, x, y}_{[r]},{U}^{\varepsilon,t, x, y}_{[r]})\rangle dr\\
\leq&2\mathbb{E}\sum_{k=0}^{[(T-s)/\Lambda]-1} \Big| \int_{k\Lambda }^{(k+1)\Lambda}
e^{\eta(\phi_r^{\varepsilon, t, x, y}+r)}\langle U^{\varepsilon,t, x, y}_{k\Lambda}-U^{t,x}_{k\Lambda}, f(X^{\varepsilon,t, x, y}_{k\Lambda},\hat{Y}^{\varepsilon,y}_{r}, {U}^{\varepsilon,t, x, y}_{k\Lambda})
-\bar{f}(X^{\varepsilon,t, x, y}_{k\Lambda}, {U}^{\varepsilon,t, x, y}_{k\Lambda})\rangle dr  \Big|
\nonumber\\
&+2\mathbb{E}\Big| \int_{[T]}^{T}
e^{\eta(\phi_r^{\varepsilon, t, x, y}+r)}\langle U^{\varepsilon,t, x, y}_{[r]}-U_{[r]}^{t,x},f(X^{\varepsilon,t, x, y}_{[r]},\hat{Y}^{\varepsilon,y}_{r},{U}^{\varepsilon,t, x, y}_{[r]})-\bar{f}(X^{\varepsilon,t, x, y}_{[r]},{U}^{\varepsilon,t, x, y}_{[r]})\rangle dr \Big|
\nonumber\\
=:&\mathbb{E}J_{2,5,1}(s)+ \mathbb{E}J_{2,5,2}(s).
\end{align*}
By  Proposition \ref{MME1}-\ref{MMU2}, and \eqref{UZ10}-\eqref{YhatY2},
\begin{align*}
\mathbb{E}J_{2,5,2}(s)&\leq C\mathbb{E}\int_{[T]}^{T}e^{\eta(\phi_r^{\varepsilon, t, x, y}+r)}
\left(\left|U^{\varepsilon,t, x, y}_{[r]}\right|+\left|U^{t,x}_{[r]}\right|\right) (1+\left|X^{\varepsilon,t, x, y}_{[r]}\right|+\left|\hat{Y}^{\varepsilon,y}_{r}\right|+\left|{U}^{\varepsilon,t, x, y}_{[r]}\right|)dr\leq C\Lambda.\\
 \mE J_{2,5,1}(s)&\leq C\Lambda^{ \frac14}+C\sum_{k=0}^{[\frac{T-s}{\Lambda}]-1}
\left(\mathbb{E}\left[e^{2\eta\phi_{k\Lambda}^{\varepsilon, t, x, y}}\left|U^{\varepsilon,t, x, y}_{k\Lambda}-U^{t,x}_{k\Lambda}\right|^2\right]\right)^{1/2}\\
&\cdot
\left(\mathbb{E}\Big|\int_{k\Lambda }^{(k+1)\Lambda }\left(f(X^{\varepsilon,t, x, y}_{k\Lambda},\hat{Y}^{\varepsilon,y}_{r}, {U}^{\varepsilon,t, x, y}_{k\Lambda})
-\bar{f}(X^{\varepsilon,t, x, y}_{k\Lambda}, {U}^{\varepsilon,t, x, y}_{k\Lambda})\right)dr\Big|^2\right)^{1/2}\\
&\leq C\Lambda^{\frac14}+\frac{C\varepsilon}{\Lambda} \max_{k}\left(\mathbb{E}\Big|\int_{0}^{\frac{\Lambda}{\varepsilon} }\left[f(X^{\varepsilon,t, x, y}_{k\Lambda},\hat{Y}^{\varepsilon,y}_{r\varepsilon+k\Lambda}, {U}^{\varepsilon,t, x, y}_{k\Lambda})
-\bar{f}(X^{\varepsilon,t, x, y}_{k\Lambda}, {U}^{\varepsilon,t, x, y}_{k\Lambda})\right]dr\Big|^2\right)^{1/2}\\
&\leq C\Lambda^{\frac14}+ \frac{C\varepsilon}{\Lambda} \max_{k}\Big(\mathbb{E}\int_{0}^{\frac{\Lambda}{\varepsilon}}\int_{r}^{\frac{\Lambda}{\varepsilon}}
\langle f(X^{\varepsilon,t, x, y}_{k\Lambda},\hat{Y}^{\varepsilon,y}_{s\varepsilon+k\Lambda}, {U}^{\varepsilon,t, x, y}_{k\Lambda})
-\bar{f}(X^{\varepsilon,t, x, y}_{k\Lambda}, {U}^{\varepsilon,t, x, y}_{k\Lambda}),  \\
& \hskip 1.6  truein   f(X^{\varepsilon,t, x, y}_{k\Lambda},\hat{Y}^{\varepsilon,y}_{r\varepsilon+k\Lambda}, {U}^{\varepsilon,t, x, y}_{k\Lambda})
-\bar{f}(X^{\varepsilon,t, x, y}_{k\Lambda}, {U}^{\varepsilon,t, x, y}_{k\Lambda})\rangle dsdr \Big)^{1/2}.
\end{align*}
Similar to that of \cite[Section 3]{C-W22}, it is easy to see that
\ce
\left\{\hat{Y}^{\e, y}_{k\Lambda+s};  \, s\in [0, \Lambda] \right\} \  \hbox{ has the same distribution as }  \
\Big\{ Y_{s/{\e}}^{X^{\e,t, x, y}_{k\Lambda},\hat{Y}^{\e,y}_{k\Lambda}}  ;  \, s\in [0, \Lambda] \Big\},
\de
 where $Y^{x,y}$ denotes the solution of the following frozen equation:
\ce
dY^{x,y}_r=\sigma^{(2)}(x, Y^{x,y}_r)dB_r+b^{(2)}(x,Y^{x,y}_r)dr- \mathbf{n}^{(2)}(Y^{x,y}_r) d\phi^{(2),x,y}_r\quad \hbox{with }
Y^{x,y}_0=y\in\bar{D}_2.
\de 
Applying the Markov property of $Y^{x,y}$ and using  \eqref{lefef2},   we have for $0<s<r<\frac{\Lambda}{\varepsilon}$,
\begin{align*}
&\mE\left[\langle f(X^{\varepsilon,t, x, y}_{k\Lambda},\hat{Y}^{\varepsilon,y}_{s\varepsilon+k\Lambda}, {U}^{\varepsilon,t, x, y}_{k\Lambda})
-\bar{f}(X^{\varepsilon,t, x, y}_{k\Lambda}, {U}^{\varepsilon,t, x, y}_{k\Lambda}), f(X^{\varepsilon,t, x, y}_{k\Lambda},\hat{Y}^{\varepsilon,y}_{r\varepsilon+k\Lambda}, {U}^{\varepsilon,t, x, y}_{k\Lambda})
-\bar{f}(X^{\varepsilon,t, x, y}_{k\Lambda}, {U}^{\varepsilon,t, x, y}_{k\Lambda})\rangle\right]\\
&=\mE\left[ \mE\left[\langle f(x,{Y}^{x,y}_{r}, u)
-\bar{f}(x, u), f(x,{Y}^{x,y}_{s},u)
-\bar{f}(x, u)\rangle\right]\Big|_{(x,y,u)=(X^{\varepsilon,t, x, y}_{k\Lambda},\hat{Y}^{\e}_{k\Lambda},U^{\varepsilon,t, x, y}_{k\Lambda})}\right] \\
&\leq C\mE\left(1+|X^{\varepsilon,t, x, y}_{k\Lambda}|+|\hat{Y}^{\e}_{k\Lambda}|+|U^{\varepsilon,t, x, y}_{k\Lambda}|\right)^2e^{-\gamma (s-r)/2}.
\end{align*}
Thus we have
\begin{align*}
\mathbb{E}J_{2,5,1}(s)\leq \frac{C\varepsilon}{\Lambda} \max_{k}\left(\mathbb{E}\left[\int_0^{\frac{\Lambda}{\varepsilon}}\int_r^{\frac{\Lambda}{\varepsilon}}
e^{-\gamma (s-r)/ 2} drds\right]\right)^{1/2}
\leq C_T (\Lambda^{1/4}+\sqrt{\varepsilon/ \Lambda}).
\end{align*}
For $J_4$, 
\begin{align*}
J_4(s) & =2 \int_s^T e^{\eta\left(\phi_r^{\varepsilon,t,x,y}+r\right)}\langle U_r^{\varepsilon,t, x, y}-U^{t,x}_r, g\left(X^{t,x}_r, U^{t,x}_r\right)\rangle\left(d \phi_r^{\varepsilon,t, x, y}-d \phi^{t,x}_r\right) \\
\leq & 2 \int_s^T e^{\eta\left(\phi_r^{\varepsilon,t,x,y}+r\right)}\left|U_r^{\varepsilon,t, x, y}-U_{[r]}^{\varepsilon,t, x, y}\right| \left|g\left(X^{t,x}_r,U^{t,x}_r\right)\right| d\left(\phi_r^{\varepsilon,t,x,y}+\phi_r^{t,x}\right) \\
& +2 \int_s^Te^{\eta\left(\phi_r^{\varepsilon,t,x,y}+r\right)}\left|U_{[r]}^{\varepsilon,t,x,y}-U^{t,x}_{[r]}\right|\left(\left|X^{t,x}_r-X^{t,x}_{[r]}\right|+\left|U^{t,x}_r-U^{t,x}_{[r]}\right|\right)\left(d \phi_r^{\varepsilon,t,x,y}+d \phi^{t,x}_r\right) \\
& +2 \int_s^T e^{\eta\left(\phi_r^{\varepsilon,t,x,y}+r\right)}\left\langle U^{\varepsilon,t, x, y}_{[r]}-U^{t,x}_{[r]}, g\left(X^{t,x}_{[r]}, U^{t,x}_{[r]}\right)\right\rangle d\left(\phi_r^{\varepsilon,t,x,y}-\phi_r^{t,x}\right)\\
&+2 \int_s^T e^{\eta\left(\phi_r^{\varepsilon,t,x,y}+r\right)}\left\langle U^{t,x}_{[r]}-U^{t,x}_r, g\left(X^{t,x}_{[r]}, U^{t,x}_{[r]}\right)\right\rangle d\left(\phi_r^{\varepsilon,t,x,y}-\phi_r^{t,x}\right)\\
=:&\sum_{i=1}^4J_{4,i}(s).
\end{align*}
By applying Proposition \ref{MME1}-\ref{MMU1}, for any $1<p<2$, 
\begin{align*}
\mE J_{4,1}(s)\leq&C\mE \int_s^T e^{\eta\left(\phi_r^{\varepsilon,t,x,y}+r\right)}\left|U_r^{\varepsilon,t, x, y}-U_{[r]}^{\varepsilon,t, x, y}\right| \left(1+|X^{t,x}_r|+|U^{t,x}_r|\right) d\left(\phi_r^{\varepsilon,t,x,y}+\phi_r^{t,x}\right)\\
\leq &C\left\{\mE \int_s^T \left|U_r^{\varepsilon,t, x, y}-U_{[r]}^{\varepsilon,t, x, y}\right|^p d\left(\phi_r^{\varepsilon,t,x,y}+\phi_r^{t,x}\right)\right\}^{\frac1p} \\ \cdot&\left\{\mE\int_s^T e^{\frac{p}{p-1}\eta\left(\phi_r^{\varepsilon,t,x,y}+r\right)}\left(1+|X^{t,x}_r|+|U^{t,x}_r|\right)^{\frac{p}{p-1}} d\left(\phi_r^{\varepsilon,t,x,y}+\phi_r^{t,x}\right)\right\}^{1-\frac1p}\\
\leq&C_{p,T}\left\{\mE \int_s^T \left|U_r^{\varepsilon,t, x, y}-U_{[r]}^{\varepsilon,t, x, y}\right|^p d\left(\phi_r^{\varepsilon,t,x,y}+\phi_r^{t,x}\right)\right\}^{\frac1p}.
\end{align*}
Similarly, 
\begin{align*}
\mE J_{4,4}(s)\leq&C_{p,T}\left\{\mE \int_s^T \left|U_r^{t, x}-U_{[r]}^{t, x}\right|^p d\left(\phi_r^{\varepsilon,t,x,y}+\phi_r^{t,x}\right)\right\}^{\frac1p}\\
&  \quad \times \left\{\mE \int_s^T e^{\frac{p}{p-1}\eta\left(\phi_r^{\varepsilon,t,x,y}+r\right)}\left|g\left(X^{t,x}_{[r]}, U^{t,x}_{[r]}\right)\right|^{\frac{p}{p-1}} d\left(\phi_r^{\varepsilon,t,x,y}-\phi_r^{t,x}\right)\right\}^{1-\frac1p}  \\
\leq&C_{p,T}\left\{\mE \int_s^T \left|U_r^{t, x}-U_{[r]}^{t, x}\right|^p d\left(\phi_r^{\varepsilon,t,x,y}+\phi_r^{t,x}\right)\right\}^{\frac1p};\\
\mE J_{4,2}(s)\leq&C\mE\int_s^Te^{\eta\left(\phi_r^{\varepsilon,t,x,y}+r\right)}\left|U_{[r]}^{\varepsilon,t,x,y}-U^{t,x}_{[r]}\right|\left(\left|X^{t,x}_r-X^{t,x}_{[r]}\right|+\left|U^{t,x}_r-U^{t,x}_{[r]}\right|\right)\left(d \phi_r^{\varepsilon,t,x,y}+d \phi^{t,x}_r\right) \\
\leq &C\left\{\mE\int_s^Te^{\frac{p}{p-1}\eta\left(\phi_r^{\varepsilon,t,x,y}+r\right)}\left|U_{[r]}^{\varepsilon,t,x,y}-U^{t,x}_{[r]}\right|^{\frac{p}{p-1}}\left(d \phi_r^{\varepsilon,t,x,y}+d \phi^{t,x}_r\right)\right\}^{\frac{p-1}{p}}\\
& \times \left( \left\{\mE\int_s^T\left|X^{t,x}_r-X^{t,x}_{[r]}\right|^p\left(d \phi_r^{\varepsilon,t,x,y}+d \phi^{t,x}_r\right)\right\}^{\frac1p}+\left\{\mE\int_s^T\left|U^{t,x}_r-U^{t,x}_{[r]}\right|^p\left(d \phi_r^{\varepsilon,t,x,y}+d \phi^{t,x}_r\right)\right\}^{\frac1p}\right) \\
\leq&C_{T,p}\left\{\mE\int_s^T\left|X^{t,x}_r-X^{t,x}_{[r]}\right|^p\left(d \phi_r^{\varepsilon,t,x,y}+d \phi^{t,x}_r\right)\right\}^{\frac1p}+C_{T,p}\left\{\mE\int_s^T\left|U^{t,x}_r-U^{t,x}_{[r]}\right|^p\left(d \phi_r^{\varepsilon,t,x,y}+d \phi^{t,x}_r\right)\right\}^{\frac1p}.
\end{align*}
According to Eq.\eqref{ave_rsde},
\ce
X^{t,x}_r-X^{t,x}_{[r]}=\int_{[r]}^r\bar{b}(X^{t,x}_u)du+\int_{[r]}^r\sigma(X^{t,x}_u)dW_u+\int_{[r]}^r\mathbf{n}(X^{t,x}_u)d\phi^{t,x}_u.
\de
We have 
\begin{align*}
&\mE\int_s^T\left|X^{t,x}_r-X^{t,x}_{[r]}\right|^p\left(d \phi_r^{\varepsilon,t,x,y}+d \phi^{t,x}_r\right)\\
\leq&C\mE\int_s^T\left|\int_{[r]}^r\bar{b}(X^{t,x}_u)du\right|^p\left(d \phi_r^{\varepsilon,t,x,y}+d \phi^{t,x}_r\right)+C\mE\int_s^T\left|\int_{[r]}^r\sigma(X^{t,x}_u)dW_u\right|^p\left(d \phi_r^{\varepsilon,t,x,y}+d \phi^{t,x}_r\right)\\
+&C\mE\int_s^T\left|\int_{[r]}^r\mathbf{n}(X^{t,x}_u)d\phi^{t,x}_u\right|^p\left(d \phi_r^{\varepsilon,t,x,y}+d \phi^{t,x}_r\right)\\
\leq &C\Lambda^{p/2}\mE\left[(\phi_T^{\varepsilon,t,x,y}+ \phi^{t,x}_T)\left(\int_0^T(1+|X^{t,x}_u|^2)du\right)^{\frac {p}{2}}\right]\\
+&C\sum_k\mE\left[\sup_{s\in[k\Lambda,(k+1)\Lambda]}\left|\int_{k\Lambda}^{s}\sigma(X^{t,x}_u)dW_u\right|^p\left(\phi^{\varepsilon,t,x,y}_{(k+1)\Lambda}
-\phi^{\varepsilon,t,x,y}_{k\Lambda}+\phi^{t,x}
_{(k+1)\Lambda}-\phi^{t,x}_{k\Lambda}\right)\right]\\
+&C\mE\int_s^T\left(\phi^{t,x}_r-\phi^{t,x}_{[r]}\right)^{p/2}
\left|\int_{[r]}^r\left|\mathbf{n}(X^{t,x}_u)\right|^2d\phi^{t,x}_u\right|^{p/2}\left(d \phi_r^{\varepsilon,t,x,y}+d \phi^{t,x}_r\right)\\
\leq&C\Lambda^{p/2}+C\sum_k\left\{\mE\left(\int_{k\Lambda}^{(k+1)\Lambda}\left|\sigma(X^{t,x}_u)\right|^2du\right)^{p}\right\}^{1/2}\left\{\mE\left(\phi^{\varepsilon,t,x,y}_{(k+1)\Lambda}
-\phi^{\varepsilon,t,x,y}_{k\Lambda}+\phi^{t,x}
_{(k+1)\Lambda}-\phi^{t,x}_{k\Lambda}\right)^{2}\right\}^{1/2}\\
+&C\mE\int_s^T(\phi^{t,x}_r-\phi^{t,x}_{[r]})^p\left(d \phi_r^{\varepsilon,t,x,y}+d \phi^{t,x}_r\right)\\
\leq& C\Lambda^{p/2}{+C\Lambda^{\frac{p-1}{2}}}+C\mE\left[\sum_k\left(\phi^{t,x}_{(k+1)\Lambda}
-\phi^{t,x}_{k\Lambda}\right)^p\left
(\phi^{\varepsilon,t,x,y}_{(k+1)\Lambda}
-\phi^{\varepsilon,t,x,y}_{k\Lambda}+\phi^{t,x}
_{(k+1)\Lambda}-\phi^{t,x}_{k\Lambda}\right)\right]\\
\leq&{C\Lambda^{\frac{p-1}{2}}}+C\left\{\mE\left[\max_k\left(\phi^{t,x}_{(k+1)\Lambda}
-\phi^{t,x}_{k\Lambda}\right)^{2p}\right]\mE\left[(\phi^{t,x}_T)^2  +(\phi^{\varepsilon,t,x,y}_T)^2 \right]\right\}^{1/2}\\
\leq &{C\Lambda^{\frac{p-1}{2}}},
\end{align*}
where we have used Proposition \ref{phiE1} in the last two inequalities.

Similarly, 
\begin{align*}&\mE\int_s^T\left|U^{t,x}_r-U^{t,x}_{[r]}\right|^p\left(d \phi_r^{\varepsilon,t,x,y}+d \phi^{t,x}_r\right)\\
\leq&C\mE\int_s^T\left[\left|\int_{[r]}^r\bar{f}(X^{t,x}_u,U^{t,x}_u)du\right|^p+\left|\int_{[r]}^rg(X^{t,x}_u,U^{t,x}_u)d\phi^{t,x}_u\right|^p+\left|\int_{[r]}^rZ^{t,x}_udW_u\right|^p\right]\left(d \phi_r^{\varepsilon,t,x,y}+d \phi^{t,x}_r\right)\\
\leq & C\Lambda^{p/2}\mE\int_s^T\left[\int_{[r]}^r\left|\bar{f}(X^{t,x}_u,U^{t,x}_u)\right|^2du\right]^{p/2}\left(d \phi_r^{\varepsilon,t,x,y}+d \phi^{t,x}_r\right)\\
+&C\mE\int_s^T\left(\phi^{t,x}_r-\phi^{t,x}_{[r]}\right)^{p/2}
\left(\int_{[r]}^r\left|g(X^{t,x}_u,U^{t,x}_u)\right|^2d\phi^{t,x}_u\right)^{p/2}\left(d \phi_r^{\varepsilon,t,x,y}+d \phi^{t,x}_r\right)\\
+&C\mE\int_s^T\left|\int_{[r]}^rZ^{t,x}_udW_u\right|^p\left(d \phi_r^{\varepsilon,t,x,y}+d \phi^{t,x}_r\right)\\
\leq&C\Lambda^{p/2}\left[\mE\left(\phi_T^{\varepsilon,t,x,y}+ \phi^{t,x}_T\right)^{\frac{2}{2-p}}\right]^{\frac{2-p}{2}}\left[\mE\int_s^T(1+|X^{t,x}_u|^2+|U^{t,x}_u|^2)du\right]^{p/2}\\
+&\mE\left[\left(\int_s^T|g(X^{t,x}_u,U^{t,x}_u)|^2d\phi^{t,x}_u\right)^{p/2}\sum_k\int_{k\Lambda}^{(k+1)\Lambda}(\phi^{t,x}_{r}-\phi^{t,x}_{k\Lambda})^{p/2}\left(d \phi_r^{\varepsilon,t,x,y}+d \phi^{t,x}_r\right)\right]\\
+&C\sum_k\left\{\mE\int_{k\Lambda}^{(k+1)\Lambda}|Z^{t,x}_u|^2du\right\}^{\frac{p}{2}}\left[\mE\left(\phi^{\varepsilon,t,x,y}_{(k+1)\Lambda}
-\phi^{\varepsilon,t,x,y}_{k\Lambda}+\phi^{t,x}_{(k+1)\Lambda}
-\phi^{t,x}_{k\Lambda}\right)^{\frac{2}{2-p}}\right]^{\frac{2-p}{2}}\\
\leq &C\Lambda^{p/2}+C\left[\mE\left(\sum_k(\phi^{t,x}_{(k+1)\Lambda}
-\phi^{t,x}_{k\Lambda})^{\frac{p}{2}}\left(\phi^{\varepsilon,t,x,y}_{(k+1)\Lambda}
-\phi^{\varepsilon,t,x,y}_{k\Lambda}+\phi^{t,x}
_{(k+1)\Lambda}-\phi^{t,x}_{k\Lambda}\right)\right)^{\frac{2}{2-p}}\right]^{\frac{2-p}{2}}\\
+&C\left\{\mE\int_{s}^{T}|Z^{t,x}_u|^2du\right\}^{\frac{p}{2}}\left[\sum_k\mE\left(\phi^{\varepsilon,t,x,y}_{(k+1)\Lambda}
-\phi^{\varepsilon,t,x,y}_{k\Lambda}+\phi^{t,x}_{(k+1)\Lambda}
-\phi^{t,x}_{k\Lambda}\right)^{\frac{2}{2-p}}\right]^{\frac{2-p}{2}}\\
\leq&C\Lambda^{p/2}+C\left\{\mE\left[\max_k\left(\phi^{t,x}_{(k+1)\Lambda}
-\phi^{t,x}_{k\Lambda}\right)^{\frac{p}{2}}(\phi^{\varepsilon,t,x,y}_T+\phi^{t,x}_T)\right]^{\frac{2}{2-p}}\right\}^{\frac{2-p}{2}}\\
+&C\mE\left[\max_k\left(\phi^{\varepsilon,t,x,y}_{(k+1)\Lambda}
-\phi^{\varepsilon,t,x,y}_{k\Lambda}+\phi^{t,x}_{(k+1)\Lambda}
-\phi^{t,x}_{k\Lambda}\right)^{\frac{p}{2-p}}(\phi^{\varepsilon,t,x,y}_T+\phi^{t,x}_T)\right]^{\frac{2-p}{2}}\\
\leq&C\Lambda^{p/2}+C\left\{\mE\left[\max_k\left(\phi^{t,x}_{(k+1)\Lambda}
-\phi^{t,x}_{k\Lambda}\right)^{\frac{2p}{2-p}}\right]\right\}^{\frac{2-p}{4}}\\
+&C\left\{\mE\max_k\left(\phi^{\varepsilon,t,x,y}_{(k+1)\Lambda}
-\phi^{\varepsilon,t,x,y}_{k\Lambda}+\phi^{t,x}_{(k+1)\Lambda}
-\phi^{t,x}_{k\Lambda}\right)^{\frac{2p}{2-p}}\right\}^{\frac{2-p}{4}}\\
\leq&C\Lambda^{\frac{p-1}{2}},
\end{align*}
where Proposition \ref{phiE1} is used in the last inequality. 

Similarly, by Proposition \ref{phiE1}-\ref{MMU1}, we have
\begin{align*}
\mE\int_s^T\left|U^{\varepsilon,t,x,y}_r-U^{\varepsilon,t,x,y}_{[r]}\right|^p\left(d \phi_r^{\varepsilon,t,x,y}+d \phi^{t,x}_r\right)\leq C\Lambda^{\frac{p-1}{2}},
\end{align*}
By Proposition \ref{MME1}-\ref{MMU1}, and \eqref{UZ10}, 
\begin{align*}
\mE J_{4,3} =
&2\mE \int_s^T e^{\eta\left(\phi_r^{\varepsilon,t,x,y}+r\right)}\left\langle U^{\varepsilon,t, x, y}_{[r]}-U^{t,x}_{[r]}, g\left(X^{t,x}_{[r]}, U^{t,x}_{[r]}\right)\right\rangle d\left(\phi_r^{\varepsilon,t,x,y}-\phi_r^{t,x}\right)\\
\leq&C \mathbb{E} \int_{[T]}^{T} e^{ \eta ( \phi_r^{\varepsilon,t,x,y} + r ) }{|U_{[r]}^{\varepsilon,t, x, y} - U_{[r]}^{t,x}| (1 + |X_{[r]}^{t,x}| + |U_{[r]}^{t,x}|) }
 d(\phi_r^{\varepsilon,t,x,y} + \phi_r^{t,x}) \\
& + \mathbb{E} \left[ e^{ \eta ( \phi_T^{\varepsilon,t,x,y} + T ) }\sum_k  |U_{k\Lambda}^{\varepsilon,t, x, y} - U_{k\Lambda}^{t,x}| (1 + |X_{k\Lambda}^{t,x}| + |U_{k\Lambda}^{t,x}|) \left(\phi_{(k+1)\Lambda}^{\varepsilon,t,x,y} - \phi_{k\Lambda}^{\varepsilon,t,x,y} - \phi_{(k+1)\Lambda}^{t,x} + \phi_{k\Lambda}^{t,x}\right) \right]\\
\leq &C_{T}\Lambda + C_T \left\{ \mathbb{E} \left[ \sup_{s\in[t,T]} \left(e^{ \eta\phi_s^{\varepsilon,t,x,y} }\left|U_{s}^{\varepsilon,t, x, y} - U_{s}^{t,x}\right|(1 +\left|X_{s}^{t,x}\right| + \left|U_{s}^{t,x}\right|)\right)^{\frac {2}{p}}\right] \right\}^{p/2}\\
& \times \left\{ \mathbb{E} \left[ \left| \sum_{k} \left( \phi_{(k+1)\Lambda}^{\varepsilon,t,x,y} - \phi_{k\Lambda}^{\varepsilon,t,x,y} - \phi_{(k+1)\Lambda}^{t,x} + \phi_{k\Lambda}^{t,x} \right) \right|^{\frac{2}{2-p}} \right] \right\}^{\frac{2-p}{2}}\\
\leq&C_{T}\Lambda+C_{T,p,\eta} \left\{ \mathbb{E} \left[ \max_{k} \left| \phi_{(k+1)\Lambda}^{\varepsilon,t,x,y} - \phi_{k\Lambda}^{\varepsilon,t,x,y} - \phi_{(k+1)\Lambda}^{t,x} + \phi_{k\Lambda}^{t,x} \right|^{\frac{p}{2-p}}\left(\phi_T^{\varepsilon,t,x,y}+\phi_T^{t,x}\right)\right] \right\}^{\frac{2-p}{2}}\\
\leq &C_{T}\Lambda+C_{T,p,\eta} \left\{ \mathbb{E} \left[ \max_{k} \left| \phi_{(k+1)\Lambda}^{\varepsilon,t,x,y} - \phi_{k\Lambda}^{\varepsilon,t,x,y} - \phi_{(k+1)\Lambda}^{t,x} + \phi_{k\Lambda}^{t,x} \right|^{\frac{2p}{2-p}}\right]\right\}^{\frac{2-p}{4}}\left\{ \mathbb{E}\left(\phi_T^{\varepsilon,t,x,y}+\phi_T^{t,x}\right)^2 \right\}^{\frac{2-p}{4}}\\
\leq &C_{T,\eta,p}\Lambda^{\frac{p-1}{2}}.
\end{align*}

Hence summing up the above estimates we get that for every $s\in [t, T]$, $x\in \bar D_1$ and $y\in \bar D_2$, 
\ce
\begin{aligned}
\mE \left[ e^{\eta\left(\phi_s^{\varepsilon,t,x,y}+s\right)}|U^{\varepsilon,t, x, y}_{s}-U^{t,x}_s \, |^{2} \right] + \mE \int_{s}^{T}e^{\eta\left(\phi_r^{\varepsilon,t,x,y}+r\right)}|{Z}^{\varepsilon,t, x, y}_{r}-Z^{t,x}_{r}|^{2}dr
\leq C_T \left(  \sqrt{\varepsilon/ \Lambda} +\varepsilon^{\frac14}+\Lambda^{\frac{p-1}{2p}} \right).
\end{aligned}\de
Recall that $p\in(1,2)$. We take $\Lambda=\eps^{\frac{p}{2p-1}}$ and thus we get 
\ce
\mE \left[e^{\eta\left(\phi_s^{\varepsilon,t,x,y}+s\right) }|U^{\varepsilon,t, x, y}_{s}-U^{t,x}_s|^{2} \right] + \mE \int_{s}^{T}e^{\eta\left(\phi_r^{\varepsilon,t,x,y}+r\right)}|{Z}^{\varepsilon,t, x, y}_{r}-Z^{t,x}_{r}|^{2}dr\leq C\e^{\frac{p-1}{2(2p-1)}}.
\de}
\eqref{hmbrate} now follows from the above estimate. \qed

\subsection{{ Quantitative} homogenization for PDEs with { zero} 
Neumann boundary conditions on nonsmooth domains}\label{S:4.3}

When $g\equiv0$, we can relax the conditions on the domains and assume $D_1$ and $D_2$ are two closed, convex, possibly unbounded domains with nonempty interiors  and consider the following PDE with zero Neumann boundary conditions:
\be\label{pdeN22}
\left\{
  \begin{array}{lllll}
    -\frac{\partial  }{\partial t}  u^{\e} =(\cL_1+ \e^{-1} \cL_2)u^{\e}+f(x,y,u^{\e})
     \quad \hbox{for } (t,x,y)\in [0,T]\times \bar D_1\times\bar{D}_2, \medskip\\
        \frac{\partial_x }{\partial \mathbf{n}}  u^{\e} =0     \quad  \hbox{for }   (t,x,y)\in [0,T]\times \partial D_1 \times\bar{D}_2,\medskip\\ 
       \frac{\partial_y }{\partial \mathbf{n}^{(2)}} u^{\e} =0 \quad   \hbox{for }  (t,x,y)\in [0,T]\times \bar{D}_1 \times\partial {D}_2,  \medskip\\
       u^{\e}(T,x,y)=h(x) \quad  \hbox{for }  ( x, y) \in \bar{D}_1 \times \bar D_2.
  \end{array}
\right.
\ee
To investigate the homogenization of this system, we consider the following backward SDE  decoupled with slow-fast reflected SDE system on $D_1$ and $D_2$:
\begin{equation}\label{bsde22}
U_s^{\varepsilon,t, x, y} =h(X_T^{\varepsilon,t, x, y}) + \int_s^T f(X_r^{\varepsilon,t, x, y}, Y_r^{\varepsilon,t, x, y}, U_r^{\varepsilon,t, x, y}) \, dr
 - \int_s^T Z_r^{\varepsilon,t, x, y} \, dW_r, 
\quad s\in [t, T], 
\end{equation}
and {$(X^{\varepsilon,t, x, y}, Y^{\varepsilon,t, x, y})$}
 are solutions of the following slow-fast reflected SDE system:
\be\label{rsde-bsde} 
\begin{cases}
dX_s^{\eps, t, x, y} = b(X_s^{\eps, t, x, y}, Y_s^{\eps, t, x, y}) \, ds + \sigma(X_s^{\eps, t, x, y}) \, dW_s 
+\mathbf{n}(X_s^{\eps, t, x, y} )d\phi^{\eps, t, x, y}_s,  \\
dY_s^{\eps, t, x, y} = \eps^{-1}b^{(2)}(X_s^{\eps, t, x, y}, Y_s^{\eps, t, x, y} ) \, ds 
+ \eps^{-1/2} \sigma^{(2)}(X_s^{\eps, t, x, y}, Y_s^{\eps, t, x, y}) \, dB_s  
+ \mathbf{n}^{(2)}(Y_s^{\eps, t, x, y}) d \phi_s^{(2), \eps, t, x, y},  \end{cases}
\ee
with $ X_t^{\eps, t, x, y}  = x\in \bar{D}_1$ and $Y_t^{\eps, t, x, y}  = y\in\bar{D}_2$, 
and as in Section 4.2, $B$ and $W$ are two independent Brownian motions.

According to \cite[Theorem 4.3.1]{ZJF}, \eqref{bsde2} has a unique solution  $(U^{\varepsilon,t, x, y}, Z^{\varepsilon,t, x, y})$ that are   progressively measurable with respect to the natural  
filtration generated by $\{\tilde{W}_r-\tilde{W}_t; r\in[t,s]\}_{t\leq s\leq T}$ and thus $U_t^{\varepsilon,t, x, y}$ is deterministic for every $t\in [0, T]$.
Moreover, for every $s \in [t,T]$, 
\begin{equation}\label{XY12}  
\mE \left[ |Y_s^{\varepsilon,t, x, y} |^2 \right] \leqslant C (1 + |x|^2 + |y|^2),   \quad
\mE  \left[ \sup_{t \leqslant s \leqslant T} |X_s^{\varepsilon,t, x, y}|^2 \right] \leqslant C(1 + |x|^2 + |y|^2).
\end{equation}
And 
\begin{equation}\label{UeZe12}  
\mE \left[  \sup_{s \in [t,T]} |U_s^{\varepsilon,t, x, y}|^2  \right] + \mE \int_t^T |Z_s^{\varepsilon,t, x, y}|^2 \, ds \leqslant C (1 + |x|^2 + |y|^2) < +\infty.
\end{equation}

We have the following result as the main result of this section, which is concentrated on the homogenization for the solution of Eq.\eqref{bsde22}.
\begin{theorem}\label{HMbsde222}
Suppose the conditions of Theorem \ref{HM4}  hold and that $f$ satisfies conditions in \textbf{(H.3)}. Then for every $s\in[t,T]$, 
{$x\in \bar D_1$ and $y\in \bar D_2$, }
$$
\mathbb{E} \left[ |U^{\varepsilon,t, x, y}_s-U^{t, x}_s|^2 \right] \leq C\varepsilon^{1/2},
$$
where 
 {$(U^{t, x}, Z^{t, x})$} is the unique solution to the backward SDE:
 \be\label{bsde2b2} 
U^{t, x}_s =h(X^{t, x}_T) + \int_s^T \bar{f}(X^{t, x}_r, U^{t, x}_r) \, dr - \int_s^T Z^{t, x}_r \, dW_r
\quad \hbox{for }  s\in [t,T].
\ee
\end{theorem}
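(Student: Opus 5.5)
Since $g\equiv 0$, no local-time integral appears in the backward equations, so the proof of Theorem \ref{HMbsde21} simplifies considerably. We will rerun that argument without the weight $e^{\eta\phi}$, which is unavailable anyway because $D_1$ may be unbounded or nonsmooth. First we record the analogues of Propositions \ref{MMU1}--\ref{MMU2} for \eqref{bsde22} and \eqref{bsde2b2}. These are the moment bounds \eqref{UeZe12} and the time regularity $\mE|U^{\eps,t,x,y}_s-U^{\eps,t,x,y}_{s+h}|^2+\mE\int_s^{s+h}|Z^{\eps,t,x,y}_r|^2dr\le Ch$. Here we actually get order $h$ rather than $h^{1/2}$, because there is no $d\phi$ term: the bound follows from It\^o's formula on $[s,s+h]$ together with the linear growth of $f$ and the moment bounds \eqref{XY12}. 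The same estimates hold for $(U^{t,x},Z^{t,x})$, using the Lipschitz continuity of $\bar f$, which comes from \eqref{lefef2} and \textbf{(H.3)}. We also reuse the frozen process $\hat Y^{\eps,y}$, the bound \eqref{YhatY}, and the bound \eqref{XbarXc}. These rely only on convexity of $D_2$ and on \cite[Proposition 4.1, Proposition 3.4]{C-W22}, not on smoothness of $D_1$.

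Next we apply It\^o's formula to $e^{\eta s}|U^{\eps,t,x,y}_s-U^{t,x}_s|^2$ with $\eta\ge 2C_0+2$ and take expectations. The martingale term vanishes, and the $|Z^\eps-Z|^2$ term has a favourable sign. The terminal term is bounded by $C_3^2e^{\eta T}\mE|X^{\eps,t,x,y}_T-X^{t,x}_T|^2\le C\eps^{1/2}$ by Theorem \ref{HM4}. This is already better than in Theorem \ref{HMbsde21}, since no Cauchy--Schwarz step against $e^{\eta\phi}$ is needed. We split the $f$-difference exactly as $J_{2,1},\dots,J_{2,5}$ were split, with $[r]$ the grid point of mesh $\Lambda$:
\begin{itemize}
\item $J_{2,1}$ and $J_{2,3}$ give $C\Lambda^{1/2}+C\eps^{1/4}+\frac12\mE\int e^{\eta r}|U^\eps_r-U_r|^2dr$ via Young's inequality.
\item $J_{2,2}$ is absorbed by the choice of $\eta$.
\item $J_{2,4}$ gives $C\Lambda^{1/2}$ via the improved time regularity.
\item $J_{2,5}$ gives $C(\Lambda+\sqrt{\eps/\Lambda})$ via the Markov property of the frozen reflected process and the exponential mixing \eqref{lefef2}, exactly as before.
\end{itemize}

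The main obstacle is getting the rate $\eps^{1/2}$ rather than the weaker rate that this naive bookkeeping produces. Optimising $\Lambda^{1/2}+\sqrt{\eps/\Lambda}$ gives $\Lambda=\eps^{1/2}$, hence only $\eps^{1/4}$, and the cross terms $J_{2,1}$, $J_{2,3}$ also cost $\eps^{1/4}$. To reach the stated bound we therefore avoid Cauchy--Schwarz on those terms and use Young's inequality directly. For instance,
\[
2|U^\eps_r-U_r|\,|X^\eps_{[r]}-X_r|\le \tfrac12|U^\eps_r-U_r|^2+2|X^\eps_{[r]}-X_r|^2,
\]
which yields $\mE|X^\eps_{[r]}-X_r|^2\le C(\eps^{1/2}+\Lambda)$ by Theorem \ref{HM4} and \eqref{XbarXc}. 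Likewise $\mE|Y^\eps_r-\hat Y^{\eps}_r|^2\le C\Lambda$ by \eqref{YhatY}, and the $U$-increment terms are $O(\Lambda)$. For $J_{2,5}$ we must obtain $C\Lambda+C\eps/\Lambda$ rather than its square root. We will try to do this by conditioning on $\mathcal F_{k\Lambda}$ and exploiting that $U^\eps_{k\Lambda}-U_{k\Lambda}$ is frozen on each block. In that case only the first moment of $\int_{k\Lambda}^{(k+1)\Lambda}(f-\bar f)dr$ enters, and by \eqref{lefef2} this is bounded by $C\eps(1-e^{-\gamma\Lambda/(2\eps)})\le C\eps$ per block, so summing over the blocks gives $C\eps/\Lambda$. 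The weight $e^{\eta r}$ varies within a block; replacing it by $e^{\eta[r]}$ costs an extra $C\Lambda$.

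With these refinements every term is bounded by $C(\eps^{1/2}+\Lambda+\eps/\Lambda)$ plus absorbable pieces. Choosing $\Lambda=\eps^{1/2}$ gives $\mE|U^{\eps,t,x,y}_s-U^{t,x}_s|^2\le C\eps^{1/2}$. A Gronwall or backward absorption step in $s$ closes the argument.
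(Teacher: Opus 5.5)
There is a genuine gap, and it is in the term you call $J_{2,4}$:
\begin{equation*}
2\,\mE\int_s^T e^{\eta r}\bigl|(U^{\eps,t,x,y}_r-U^{t,x}_r)-(U^{\eps,t,x,y}_{[r]}-U^{t,x}_{[r]})\bigr|\,\bigl|f(X^{\eps,t,x,y}_{[r]},\hat Y^{\eps,y}_r,U^{\eps,t,x,y}_{[r]})-\bar f(X^{\eps,t,x,y}_{[r]},U^{\eps,t,x,y}_{[r]})\bigr|\,dr .
\end{equation*}
This term is the cost of freezing $U^{\eps}-U$ on each block so that mixing can be used in $J_{2,5}$. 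The second factor is of order one pointwise; only its integral over a block is small. The first factor has $L^2$-size $\Lambda^{1/2}$, coming from the stochastic-integral part of the increment. So Cauchy--Schwarz gives $C\Lambda^{1/2}$, and Young's inequality cannot improve this, because neither factor is small relative to the other.

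Your refinements do work for $J_{2,1}$ and $J_{2,3}$ (via Young). They also work for $J_{2,5}$: the conditional first-moment argument is correct and gives $C\Lambda+C\eps/\Lambda$. But nothing in the proposal touches $J_{2,4}$. Your closing claim that every term is $O(\eps^{1/2}+\Lambda+\eps/\Lambda)$ contradicts your own bullet $J_{2,4}\le C\Lambda^{1/2}$. As written, the argument gives $C(\eps^{1/2}+\Lambda^{1/2}+\eps/\Lambda)$. That is $\eps^{1/4}$ for $\Lambda=\eps^{1/2}$, and at best $\eps^{1/3}$ for $\Lambda=\eps^{2/3}$.

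The missing idea is never to pair $U^\eps-U$ with the oscillating term. The paper writes $U^{\eps,t,x,y}-U^{t,x}$ as the solution of a BSDE with random generator $\hat F(s,u)=f(X^{\eps,t,x,y}_s,Y^{\eps,t,x,y}_s,u+U^{t,x}_s)-\bar f(X^{t,x}_s,U^{t,x}_s)$, which is Lipschitz in $u$. It then applies the Hu--Peng type stability estimate of Lemma \ref{stabilitylemma}, which is obtained by squaring the integrated equation rather than applying It\^o's formula to $|U^\eps-U|^2$. There the inhomogeneity enters only through $\sup_{s}\mE|\int_s^T\hat F(r,0)\,dr|^2$. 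This gives three advantages:
\begin{itemize}
\item Only the averaged solution $U^{t,x}$ appears in $\hat F(r,0)$, so freezing it costs $\mE|U^{t,x}_r-U^{t,x}_{[r]}|^2\le C\Lambda$, which is squared rather than square-rooted.
\item All Lipschitz replacement errors are $O(\eps^{1/2}+\Lambda)$ in $\mE\int|\cdot|^2dr$.
\item The oscillating part enters through second moments of its block integrals, which are $O(\eps/\Lambda)$ by \eqref{lefef2}.
\end{itemize}
The total is $\eps^{1/2}+\Lambda+\eps/\Lambda$, hence $\eps^{1/2}$ with $\Lambda=\eps^{1/2}$. If you want to stay within your It\^o framework, you would need a separate argument showing that the martingale part of the increment in $J_{2,4}$ is asymptotically uncorrelated with the fast factor, and the proposal does not supply one.
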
  

To prove Theorem \ref{HMbsde222}, we first present an estimate inspired 
from a stability result in \cite[Theorem 2.1]{HP97}
for the following backward SDE with random generator $F(s,v,z)$:
\begin{equation}\label{stabililyBSDE}
    V_s = \xi + \int_s^T F(r, V_r, Z_r) dr - \int_s^T Z_r dW_r, 
    \quad  s\in[0,T],
\end{equation}
where $W$ \textit{is a $d$-dimensional standard Brownian motion defined on} $(\Omega, \mathcal{F}, \mathbb{P})$ with filtration $\{\mathcal{F}_t\}_{t\in[0,T]}$, $\xi\in\mathcal{F}_T$ and $\mathbb{E}|\xi|^2<\infty$, $F :  \Omega \times [0, T] \times  \mathbb{R}^m \times \mathbb{R}^{m \times n} \to \mathbb{R}^m$. \textit{For any fixed $v \in \mathbb{R}^m, z \in \mathbb{R}^{m \times n}$, $F(\cdot, v, z)$ is $\{\mathcal{F}_t\}$-progressively measurable  satisfying}
$
\mathbb{E} \int_0^T |F(t, 0, 0)|^2 dt <\infty.
$

\begin{lemma}\label{stabilitylemma}
Suppose that  $F$ is uniformly Lipschitz continuous, that is, there exists a constant $L>0$ such that for any $s \in [0, T]$, 
$v_1, v_2\in\mathbb{R}^m, ~z_1, z_2\in\mathbb{R}^{m\times n}$,
$$
        |F(s, v_1, z_1) - F(s, v_2, z_2)| \leq L(|v_1 - v_2| + |z_1 - z_2|) 
        \quad \hbox{a.s.}.
 $$
Then the BSDE \eqref{stabililyBSDE} has a unique solution $(V, Z)$ and it satisfies that for every $s\in[0,T]$,
\begin{equation}\label{e:4.37}
   \mathbb{E} \left[|V_s|^2 + \int_s^T |Z_r|^2 dr \right]\leq C_T\left[\mathbb{E} [ |\xi|^2 ] +\sup_{t\in[s,T]}\mathbb{E}\left|\int_t^TF(r,0,0) dr\right|^2dt\right].
\end{equation}
\end{lemma}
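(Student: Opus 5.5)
Existence and uniqueness is the classical Pardoux--Peng result for Lipschitz generators with square-integrable data, so the work lies in the a priori bound \eqref{e:4.37}. The point is that the right-hand side contains $\sup_t\mE|\int_t^T F(r,0,0)dr|^2$ instead of the usual $\mE\int_s^T|F(r,0,0)|^2dr$. This is the form needed later, where $F(r,0,0)$ will be an oscillating term such as $f(X^\eps_r,Y^\eps_r,U_r)-\bar f(\cdot)$ whose time integral is small while its pointwise size is not. The plan is therefore to move the free term $F(r,0,0)$ out of the generator and into an auxiliary process.

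First, set
\[
G_s:=\mE\Big[\int_s^T F(r,0,0)\,dr\,\Big|\,\mathcal F_s\Big].
\]
Then, by Jensen, $\mE|G_s|^2\le \mE|\int_s^TF(r,0,0)dr|^2$, so $\sup_{s\le t\le T}\mE|G_t|^2$ is bounded by the second term on the right of \eqref{e:4.37}. Write $M_s:=\mE[\int_0^TF(r,0,0)dr\,|\,\mathcal F_s]=\int_0^sF(r,0,0)dr+G_s$, a square-integrable martingale. By martingale representation, $M_s=M_0+\int_0^s\Gamma_r\,dW_r$. Hence
\[
G_s=\int_s^TF(r,0,0)\,dr-\int_s^T\Gamma_r\,dW_r ,
\]
so $(G,\Gamma)$ solves a BSDE with terminal value $0$ and generator $F(r,0,0)$.

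Second, put $\tilde V:=V-G$ and $\tilde Z:=Z-\Gamma$. Then
\[
\tilde V_s=\xi+\int_s^T\tilde F(r,\tilde V_r,\tilde Z_r)\,dr-\int_s^T\tilde Z_r\,dW_r,
\qquad
\tilde F(r,v,z):=F(r,v+G_r,z+\Gamma_r)-F(r,0,0).
\]
The new generator is still $L$-Lipschitz, and $|\tilde F(r,v,z)|\le L(|v|+|z|+|G_r|+|\Gamma_r|)$.

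Third, apply It\^o's formula to $e^{\beta r}|\tilde V_r|^2$ with $\beta$ large and use Young's inequality to absorb the $|\tilde Z|$ term:
\[
\mE|\tilde V_s|^2+\tfrac12\mE\int_s^T|\tilde Z_r|^2dr\le C\Big(\mE|\xi|^2+\mE\int_s^T|G_r|^2dr+\mE\int_s^T|\Gamma_r|^2dr\Big).
\]
The $G$ term is harmless, since $\int_s^T\mE|G_r|^2dr\le T\sup_r\mE|G_r|^2$.

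The main obstacle is the $\Gamma$ term. Its $L^2$ norm $\mE\int_s^T|\Gamma_r|^2dr=\mE|M_T-M_s|^2$ is not obviously controlled by $\sup_t\mE|\int_t^TF(r,0,0)dr|^2$ alone, because $\Gamma$ sees the fluctuations of $F(\cdot,0,0)$. I would avoid needing a bound on $\Gamma$ by exploiting that $\Gamma$ enters linearly through $z$. Concretely, I would not shift $Z$: set $\tilde Z:=Z$ and keep the generator $F(r,\tilde V_r+G_r,Z_r)-F(r,0,0)$. The remaining martingale term $\int\Gamma\,dW$ then appears in the $\tilde V$ dynamics as a cross term. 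When taking expectations in the It\^o expansion of $|\tilde V|^2$, it produces $\mE\int\langle \Gamma,Z\rangle$-type terms. I would estimate these via the identity $\mE\int_s^T\langle\Gamma_r,Z_r\rangle dr=\mE[(M_T-M_s)(\int_s^TZ\,dW)]$, together with the BSDE relation $\int_s^TZ\,dW=\xi-V_s+\int_s^TF\,dr$. Either this, or a Lipschitz/Gronwall argument with the $z$-dependence handled by a Girsanov linearization (writing $F(r,V,Z)-F(r,V,0)=\beta_r Z$ with $|\beta_r|\le L$ and changing measure to remove it), reduces everything to the quantities $\mE|\xi|^2$ and $\sup_t\mE|\int_t^TF(r,0,0)dr|^2$.

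Finally, combine $V=\tilde V+G$ with the bounds for $\mE|\tilde V_s|^2$, $\mE\int_s^T|Z_r|^2dr$ and $\sup_t\mE|G_t|^2$ to obtain \eqref{e:4.37}.
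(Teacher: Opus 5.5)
Your construction of $G$, the Jensen bound on $\mathbb{E}|G_s|^2$ and the shifted BSDE are all fine. The proposal fails to close at the step you flag as the main obstacle, and neither workaround you sketch resolves it.

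Keeping $\tilde Z=Z$ does not eliminate $\Gamma$. The martingale part of $\tilde V=V-G$ is still $\int(Z_r-\Gamma_r)\,dW_r$, so It\^o's formula produces $|Z-\Gamma|^2$, while the Lipschitz bound on the generator involves $|Z|$. Your identity $\mathbb{E}\int_s^T\langle\Gamma_r,Z_r\rangle\,dr=\mathbb{E}\big[(M_T-M_s)\int_s^TZ_r\,dW_r\big]$ does not help either. Estimating it by Cauchy--Schwarz, even after substituting the BSDE relation for $\int_s^TZ\,dW$, requires a bound on $\mathbb{E}|M_T-M_s|^2=\mathbb{E}\int_s^T|\Gamma_r|^2dr$. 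That is exactly the quantity you declared uncontrolled, so this route is circular. The Girsanov linearization is only gestured at. For systems ($m>1$), the increment $F(r,V,Z)-F(r,V,0)$ is in general not of the form $Z_r\beta_r$ with one common drift $\beta_r$, so a change of measure cannot remove it. Even for $m=1$, it would still leave the $\int_s^T|Z_r|^2dr$ part of \eqref{e:4.37} to be proved.

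The missing idea is a one-line projection bound, and your "obstacle" is not one. Set $X_s:=\int_s^TF(r,0,0)\,dr$. Then $G_s=\mathbb{E}[X_s\,|\,\mathcal F_s]$ and $\int_s^T\Gamma_r\,dW_r=M_T-M_s=X_s-\mathbb{E}[X_s\,|\,\mathcal F_s]$, so
\begin{equation*}
\mathbb{E}\int_s^T|\Gamma_r|^2\,dr=\mathbb{E}|X_s|^2-\mathbb{E}\big|\mathbb{E}[X_s\,|\,\mathcal F_s]\big|^2\leq \mathbb{E}\Big|\int_s^TF(r,0,0)\,dr\Big|^2 .
\end{equation*}
With this, your first plan, the full shift $\tilde Z=Z-\Gamma$, is already complete. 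The standard a priori estimate gives $\mathbb{E}|\tilde V_s|^2+\mathbb{E}\int_s^T|\tilde Z_r|^2dr\leq C\big(\mathbb{E}|\xi|^2+\mathbb{E}\int_s^T(|G_r|^2+|\Gamma_r|^2)\,dr\big)$. Both integrals are bounded by $C_T\sup_{t\in[s,T]}\mathbb{E}|X_t|^2$, and you then undo the shift.

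The corrected argument is a genuinely different proof from the paper's, which never introduces $(G,\Gamma)$. The paper squares the identity $V_s+\int_s^TZ_r\,dW_r=\xi+\int_s^TF(r,0,0)\,dr+\int_s^T[F(r,V_r,Z_r)-F(r,0,0)]\,dr$. It uses the orthogonality of $V_s$ and $\int_s^TZ\,dW$ to get $\mathbb{E}|V_s|^2+\mathbb{E}\int_s^T|Z_r|^2dr$ on the left. It bounds the Lipschitz remainder by Cauchy--Schwarz in time and absorbs it on intervals of length of order $L^{-2}$. It then applies Gronwall and iterates backward over finitely many such intervals. Your corrected route replaces that induction with a single textbook estimate. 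The price is invoking martingale representation for $M$, so $W$ must generate the filtration; the paper's squaring argument does not use this.
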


\begin{proof}  By \cite[Theorems 4.3.1 and 4.4.4]{ZJF}, Eq. \eqref{stabililyBSDE} has a unique solution $(V,Z)$ satisfying 
$$
\mathbb{E} \Big[ \sup_{s\in[0,T]}|V_s|^2 \Big] +\mathbb{E}\int_0^T|Z_s|^2ds\leq C\mathbb{E}\left[|\xi|^2+\int_0^T|F(r,0,0)|^2dr\right].
$$
Note that for every $s\in[0,T]$, 
\begin{equation}\label{VZ1}
 V_s +\int_s^T Z_r dW_r= \xi +\int_s^TF(r,0,0) dr+ \int_s^T \left[F(r, V_r, Z_r)-F(r,0,0)\right] dr.
\end{equation}
By taking expectations of the square   on both sides,  we get 
$$
\begin{aligned}
&\mathbb{E}|V_s|^2+\mathbb{E}\int_s^T |Z_r|^2dr\\
&\leq  2\mathbb{E}\left[ \Big|\xi +\int_s^TF(r,0,0) dr\Big|^2 \right] +2\mathbb{E}\left[ \Big |\int_s^T \left[F(r, V_r, Z_r)-F(r,0,0)\right] dr\Big|^2 \right] \\
&\leq 2\mathbb{E} \left[ \Big| \xi +\int_s^TF(r,0,0) dr \Big|^2 \right]  +
2(T-s)L^2\mathbb{E}\int_s^T \left[|V_r|^2+|Z_r|^2\right] dr.
\end{aligned}
$$
Then we have for $s\in[T-\frac{1}{4L^2},T]$, 
\begin{equation}\label{e:4.38}
\mathbb{E}|V_s|^2+\frac12\mathbb{E}\int_s^T |Z_r|^2dr\leq 
4 \mathbb{E}\left[|\xi|^2 +\left|\int_s^TF(r,0,0) dr\right|^2\right]+ \frac12 \mathbb{E}\int_s^T |V_r|^2dr,
\end{equation}
For $t\in[0,T]$, define 
$$
g_t:=\mathbb{E}\left[|\xi|^2 +\left|\int_t^TF(r,0,0) dr\right|^2\right], \quad \tilde{g}_t:=g_{T-t},  
\quad \tilde{h}_t:=\mathbb{E} \left[ |V_{T-t}|^2 \right] .
$$ Then by \eqref{e:4.38},
$$
\tilde{h}_{s}\leq 4\tilde{g}_s+\frac12\int_0^s\tilde{h}_rdr.
$$
 Gronwall's lemma yields that 
 {
$$
\tilde{h}_{s}\leq 4 \tilde{g}_s+ 2 \int_0^se^{(s-r)/2 }\tilde{g}_rdr .
$$
Hence, 
$$
\mathbb{E} \left[ |V_s|^2 \right] \leq 4 g_s+ 2 \int_s^Tg_te^{(t-s)/2 } dt
\leq 4e^{T/2} \sup_{t\in[s,T]} g_t 
=4 e^{T/2}\sup_{t\in[s,T]}\mathbb{E}\left[|\xi|^2 +\left|\int_t^TF(r,0,0) dr\right|^2\right].
$$
}
 Putting this to the right hand side of \eqref{e:4.38} yields 
$$
\mathbb{E}\int_s^T |Z_r|^2dr\leq C_T\left[\mathbb{E}|\xi|^2 +\sup_{t\in[s,T]}\mathbb{E}\left|\int_t^TF(r,0,0) dr\right|^2\right].
$$

The estimate \eqref{e:4.37}
 then follows by induction on $[T-\frac{k+1}{4L^2},T-\frac{k}{4L^2}], ~k=0,1,\cdots$ as in the proof of \cite[Theorem 2.1]{HP97}.
\end{proof}

Now we are ready to prove   Theorem \ref{HMbsde222}. 
\begin{proof}
 Note that for the solutions $(U^{\varepsilon, t, x, y},Z^{\varepsilon, t, x, y})$, {$(U^{t, x}, Z^{t, x})$ } of \eqref{bsde22} and \eqref{bsde2b2}, analogous to Proposition \ref{MMU1} and \eqref{UZ10}, we have 
\begin{align}\label{UZ1}
&\sup_{\varepsilon}\mE \sup_{s \in [t,T]} |U^{\varepsilon, t, x, y}_s |^2 + \mE \int_t^T|Z^{\varepsilon, t, x, y}_s|^2 \, ds < +\infty.\nonumber\\
&\mE \sup_{s \in [t,T]} |U^{t,x}_s |^2 + \mE \int_t^T|Z^{t, x}_s|^2 \, ds < +\infty.
\end{align}
Applying It\^o's formula to $|U^{\varepsilon, t, x, y}_s-U^{\varepsilon, t, x, y}_{s-h}|^2$ for $t\leq s-h<s\leq T$,
$$
\begin{aligned}
|U_s^{\varepsilon, t, x, y} - U_{s-h}^{\varepsilon, t, x, y}|^2 
= &\,  2\int_{s-h}^s \langle  U_{r}^{\varepsilon, t, x, y} - U_s^{\varepsilon, t, x, y}, f(X_{r}^{\varepsilon, t, x, y}, Y_{r}^{\varepsilon, t, x, y}, U_{r}^{\varepsilon, t, x, y}) dr \rangle \\
& \quad + 2\int_{s-h}^s  \langle U_{r}^{\varepsilon, t, x, y} - U_s^{\varepsilon, t, x, y}, Z_{r}^{\varepsilon, t, x, y} dW_r \rangle - \int_{s-h}^s | Z_{r}^{\varepsilon, t, x, y} |^2 dr \\
\leq & \, \int_{s-h}^s |U_{r}^{\varepsilon, t, x, y} - U_s^{\varepsilon, t, x, y}|^2 dr 
+ \int_{s-h}^s |f(X_{r}^{\varepsilon, t, x, y}, Y_{r}^{\varepsilon, t, x, y}, U_{r}^{\varepsilon, t, x, y})|^2 dr \\
&\quad + 2\int_{s-h}^s  \langle U_{r}^{\varepsilon, t, x, y} - U_s^{\varepsilon, t, x, y}, Z_{r}^{\varepsilon, t, x, y} dW_r \rangle- \int_{s-h}^s | Z_{r}^{\varepsilon, t, x, y} |^2 dr.
\end{aligned}
$$ 
By taking expectations and by \eqref{UeZe12} and \eqref{UZ1}, we get for $t\leq s-h<s\leq T$, there exists a constant $C>0$ independent of $\varepsilon$ satisfying 
\be\label{shM}
\mathbb{E}\int_{s-h}^s| Z_{r}^{\varepsilon, t, x, y} |^2 dr + \mathbb{E} 
\left[ |U_s^{\varepsilon, t, x, y} - U_{s-h}^{\varepsilon, t, x, y}|^2 \right]  \leq Ch.
\ee
Similarly, for the solution  {$(U^{t, x}, Z^{t, x})$ } of \eqref{bsde2b}, 
 \be\label{shM2}
\mathbb{E}\int_{s-h}^s| Z_{r}^{t, x} |^2 dr + \mathbb{E} 
\left[ |U_s^{t, x} - U_{s-h}^{t, x}|^2 \right]  \leq Ch.
\ee
     Divide $[t,T]$ into subintervals with the same length $\Lambda$. Denote $[s]:=k\Lambda$ for $s\in[k\Lambda,(k+1)\Lambda)$ for $k=0, 1, 2, \dots, [T/\Lambda]$. 
      Let $\hat{Y}^{\e, y}$ be the solution determined by:
 \ce
\begin{cases}
  d\hat{Y}_s^{\e, y}=\e^{-1} b^{(2)}(X_{k\Lambda}^{\varepsilon, t, x, y},\hat{Y}_s^{\e, y})ds+\e^{-1/2}\sigma^{(2)}(X_{k\Lambda}^{\varepsilon, t, x, y},\hat{Y}_s^{\e, y})dB_s +\mathbf{n}^{(2)}(\hat{Y}_s^{\e, y})d\hat{\phi}^{\e,y}_s
  \quad \hbox{for } s\in [k\Lambda, (k+1)\Lambda),\medskip \\
 \hat{Y}_{t}^{\e, y}=y, \quad \hat{Y}_{(k+1)\Lambda}^{\e, y}=\lim_{s\to (k+1)\Lambda-}\hat{Y}_s^{\e, y}.
\end{cases}
\de
Then by \cite[Propositions { 3.4 and} 4.1]{C-W22},
there exists a constant $C$ independent of $\e, ~\Lambda$ such that for all $s\in[t,T]$, $x\in \bar D_1$ and $y\in \bar D_2$, 
\be\label{YhatYc}
\mE \left[ |\hat{Y}_s^{\e, y}|^2 \right]\leq C(1+|y|^2)
 \quad \hbox{and} \quad 
 \mE \left[ | \hat{Y}_s^{\e, y}-{Y}_s^{\varepsilon, t, x, y}|^2 \right] \leq C\Lambda.
 \ee 
By an argument similar to that for
\cite[(4.8)]{C-W22}, we get for any $r, s\in[t,T]$, $x\in \bar D_1$ and $y\in \bar D_2$, for any $q\geq1$,
\be\label{XbarX}  
\mE \left[  | X_s^{\varepsilon, t, x, y}-{X}_r^{\varepsilon, t, x, y}|^{2q} \right] \leq C|r-s|^{q}.
\ee
For $u\in\mathbb{R}$ and $s\in[t,T]$, set $$
\hat{F}(s,u):=f(X^{\varepsilon, t, x, y}_{s},Y^{\varepsilon, t, x, y}_{s},u+U^{t,x}_r)-\bar{f}(X^{t,x}_{r},U^{t,x}_{r}).
$$
Then $\hat{F}$ is a random function on $[t,T]\times\mathbb{R}$ so that for every $u\in\mathbb{R}$, $\hat{F}(\cdot, u)$ is progressively measurable, and 
that $(U^{\varepsilon,t, x, y}_s-U^{t,x}_s, Z^{\varepsilon,t, x, y}_s-Z^{t,x}_s; s\in[t,T])$ satisfies the following BSDE:
\be\label{hatUBSDE}
U^{\varepsilon,t, x, y}_s-U^{t,x}_s=h(X^{\varepsilon, t, x, y}_{T})-h(X^{t,x}_{T})
+\int_s^T\hat{F}(r,U^{\varepsilon,t, x, y}_{r}-U^{t,x}_r)dr-\int_s^T\left(Z^{\varepsilon,t, x, y}_r-Z^{t,x}_r\right)dW_{r}.
\ee
By the conditions on $f$ and $h$, and Proposition \ref{MME1},
\ce
\mathbb{E}\int_{t}^T|\hat{F}(s,0)|^2ds
&\leq& C\mathbb{E}\int_{t}^T[1+|X^{\varepsilon, t, x, y}_{s}|+|Y^{\varepsilon, t, x, y}_{s}|+|U^{t,x}_s|+|X^{t,x}_s|]^2ds\\
&\leq& C_T(1+|x|^2+|y|^2)<\infty.\\
\mathbb{E}\left|h(X^{\varepsilon, t, x, y}_{T})-h(X^{t,x}_{T})\right|^2
&\leq& C\mathbb{E}[1+|X^{\varepsilon, t, x, y}_{T}|+|X^{t,x}_T|^2]\leq C_T(1+|x|^2+|y|^2)<\infty.
\de
Then analogous to the proof of \cite[Theorem 3.9]{SWYZ}, we have by Lemma \ref{stabilitylemma}  that for every $s'\in[t,T]$,  
\begin{align*}
&\mathbb{E} \left[ |U^{\varepsilon,t, x, y}_{s'}-U_{s'}^{t,x}|^{2} \right] 
\leq C_T\mathbb{E} \left[ \left|h(X^{\varepsilon, t, x, y}_{T})-h(X^{t,x}_{T})\right|^2 \right]
+C_T\sup_{s\in[s',T]}\mathbb{E} \left[ \Big|\int_{s}^T \hat{F}(r,0)dr\Big|^2 \right] \\
\leq &C_T\mathbb{E} \left[ \left|h(X^{\varepsilon, t, x, y}_{T})-h(X^{t,x}_{T})\right|^2 \right]
+C_T\sup_{s\in[s',T]}\mathbb{E}\left[ \Big |\int_{s}^T \left( f(X^{\varepsilon, t, x, y}_r,Y^{\varepsilon, t, x, y}_r,U_r^{t,x})-\bar{f}(X^{t,x}_r,U^{t,x}_r)\right) dr\Big|^2 \right]\\
&\leq C\mathbb{E} \left[ |X^{\varepsilon, t, x, y}_{T}-X^{t, x}_{T}|^2 \right] 
+C_T\mathbb{E}\int_{s'}^T \left( f(X^{\varepsilon, t, x, y}_r,Y^{\varepsilon, t, x, y}_r,U_r^{t,x})-{f}(X^{\varepsilon, t, x, y}_{[r]},\hat{Y}^{\varepsilon,y}_r,U_{[r]}^{t,x})\right)^2dr\\
&+C\sup_{s\in[s',T]}\mathbb{E}\left|\int_{s}^T \left[{f}(X^{\varepsilon, t, x, y}_{[r]},\hat{Y}^{\varepsilon,y}_r,U_{[r]}^{t,x})-\bar{f}(X^{\varepsilon, t, x, y}_{[r]},U_{[r]}^{t,x})\right]dr\right|^2\\
&+C_T\mathbb{E}\int_{s'}^T \left[\bar{f}(X^{\varepsilon, t, x, y}_{[r]},U_{[r]}^{t,x})-\bar{f}(X^{t,x}_r,U^{t,x}_r)\right]^2dr\\
\leq & C\mathbb{E}|X^{\varepsilon, t, x, y}_{T}-X^{t, x}_{T}|^2+C\mathbb{E}\int_{s'}^T\left[|X^{\varepsilon, t, x, y}_{r}-X^{\varepsilon, t, x, y}_{[r]}|^2+|Y^{\varepsilon, t, x, y}_r-\hat{Y}^{\varepsilon,y}_r|^2+|U_{r}^{t,x}-U_{[r]}^{t,x}|^2\right]dr\\
&+C\sup_{s\in[s',T]}\mathbb{E}\left[ \Big|  \int_{s}^T \left( {f}(X^{\varepsilon, t, x, y}_{[r]},\hat{Y}^{\varepsilon,y}_r,U_{[r]}^{t,x})-\bar{f}(X^{\varepsilon, t, x, y}_{[r]},U_{[r]}^{t,x})\right) dr\Big|^2 \right] \\
&+C_T\mathbb{E}\int_{s'}^T\left( |X^{\varepsilon, t, x, y}_{[r]}-X^{t,x}_r|^2+|U^{t,x}_r-U^{t,x}_{[r]}|^2\right) dr\\
\leq &C\varepsilon^{1/2}+C\Lambda+C_T\sup_{s\in[s',T]}\mathbb{E}\left[ \Big| \int_{s}^T \left[{f}(X^{\varepsilon, t, x, y}_{[r]},\hat{Y}^{\varepsilon,y}_r,U_{[r]}^{t,x})-\bar{f}(X^{\varepsilon, t, x, y}_{[r]},U_{[r]}^{t,x})\right) dr\Big|^2 \right] ,
\end{align*}
where Theorem \ref{HM4} and \eqref{shM}-\eqref{XbarX} are applied in the last inequality.
Note that for every $s\in[t,T]$,
\begin{align*}
&\mathbb{E}\left[ \Big| \int_s^T \left( f(X^{\varepsilon, t, x, y}_{[r]},\hat{Y}^{\varepsilon,y}_r,U_{[r]}^{t,x})-\bar{f}(X^{\varepsilon, t, x, y}_{[r]},U_{[r]}^{t,x})\right)
dr\Big|^2\right]\\
\leq & 3\mathbb{E}\left[ \Big|  \int_{[T]}^T \left( {f}(X^{\varepsilon, t, x, y}_{[r]},\hat{Y}^{\varepsilon,y}_r,U_{[r]}^{t,x})-\bar{f}(X^{\varepsilon, t, x, y}_{[r]},U_{[r]}^{t,x})\right) dr\Big|^2\right] \\
&+3\mathbb{E}\left[ \Big| \int_{s}^{[s]+\Lambda} \left( {f}(X^{\varepsilon, t, x, y}_{[r]},\hat{Y}^{\varepsilon,y}_r,U_{[r]}^{t,x})-\bar{f}(X^{\varepsilon, t, x, y}_{[r]},U_{[r]}^{t,x})\right) dr\Big|^2\right] \\
&+3\mathbb{E} \left[ \Big|  \sum_{k=1+\frac{[s]}{\Lambda}}^{[T]/\Lambda}\int_{k\Lambda}^{(k+1)\Lambda} \left( {f}(X^{\varepsilon, t, x, y}_{k\Lambda},\hat{Y}^{\varepsilon,y}_r,U_{k\Lambda}^{t,x})-\bar{f}(X^{\varepsilon, t, x, y}_{k\Lambda},U_{k\Lambda}^{t,x})\right) dr\Big|^2\right] \\
=:& I_1+I_2+I_3.
\end{align*}
By Proposition \ref{MME1} and \eqref{YhatYc},
\be\label{I12}
I_i\leq C\Lambda(1+|x|^2+|y|^2),\quad i=1,2.
\ee
Note that
\begin{align*}
I_3&\leq \frac{C_T}{\Lambda}\sum_{k=1+\frac{[s]}{\Lambda}}^{[T]/\Lambda}
\mathbb{E}\left[ \Big| \int_{k\Lambda}^{(k+1)\Lambda} \left( f(X^{\varepsilon, t, x, y}_{k\Lambda},\hat{Y}^{\varepsilon,y}_r,U_{k\Lambda}^{t,x})-\bar{f}(X^{\varepsilon, t, x, y}_{k\Lambda},U_{k\Lambda}^{t,x})\right)dr\Big|^2 \right] \\
&\leq \frac{C_T\varepsilon^2}{\Lambda^2}\max_{k}\mathbb{E}\left[ \Big| \int_{0}^{\frac{\Lambda}{\varepsilon}}
 \left( {f}(X^{\varepsilon, t, x, y}_{k\Lambda},\hat{Y}^{\varepsilon,y}_{{ k\Lambda + \eps s}} ,U_{k\Lambda}^{t,x})
 -\bar{f}(X^{\varepsilon, t, x, y}_{k\Lambda},U_{k\Lambda}^{t,x})\right) {  ds} \Big|^2 \right]\\
& {    = \frac{2C_T\varepsilon^2}{\Lambda^2}  } 
\max_{k}\mathbb{E}  
\Big[  
\int_{0}^{\frac{\Lambda}{\varepsilon}}\int_{r}^{\frac{\Lambda}{\varepsilon}}  \Big\langle{f}(X^{\varepsilon, t, x, y}_{k\Lambda},\hat{Y}^{\varepsilon,y}_{s\varepsilon+k\Lambda},U_{k\Lambda}^{t,x})-\bar{f}(X^{\varepsilon, t, x, y}_{k\Lambda},U_{k\Lambda}^{t,x}),
    \\
  & \hskip 1.8truein  
  {f}(X^{\varepsilon, t, x, y}_{k\Lambda},\hat{Y}^{\varepsilon,y}_{r\varepsilon+k\Lambda}, 
   U_{k\Lambda}^{t,x})-\bar{f}(X^{\varepsilon, t, x, y}_{k\Lambda},U_{k\Lambda}^{t,x})\Big\rangle \, dsdr 
 \Big].
\end{align*}

Similar to that of \cite[Section 3]{C-W22}, it is easy to see that
\ce
\left\{\hat{Y}^{\e, y}_{k\Lambda+r};  \, r\in [0, \Lambda] \right\} \  \hbox{ has the same distribution as }  \
\Big\{ Y_{r/{\e}}^{X^{\e,t, x, y}_{k\Lambda},\hat{Y}^{\e,y}_{k\Lambda}}  ;  \, r\in [0, \Lambda] \Big\},
\de
 where $Y^{x,y}$ denotes the solution of the following frozen equation:
\ce
dY^{x,y}_r=\sigma^{(2)}(x, Y^{x,y}_r)dB_r+b^{(2)}(x,Y^{x,y}_r)dr
 +  \mathbf{n}^{(2)}(Y^{x,y}_r) d\phi^{(2),x,y}_r\quad \hbox{with }
Y^{x,y}_0=y\in\bar{D}_2.
\de 
Applying the Markov property of $Y^{x,y}$ and using  \eqref{lefef2},   we have for $0<s<r<\frac{\Lambda}{\varepsilon}$,
\begin{align*}
&\mE\left[\langle f(X^{\varepsilon,t, x, y}_{k\Lambda},\hat{Y}^{\varepsilon,y}_{s\varepsilon+k\Lambda}, {U}^{t, x}_{k\Lambda})
-\bar{f}(X^{\varepsilon,t, x, y}_{k\Lambda}, {U}^{t, x}_{k\Lambda}), f(X^{\varepsilon,t, x, y}_{k\Lambda},\hat{Y}^{\varepsilon,y}_{r\varepsilon+k\Lambda}, {U}^{t, x}_{k\Lambda})
-\bar{f}(X^{\varepsilon,t, x, y}_{k\Lambda}, {U}^{t, x}_{k\Lambda})\rangle\right]\\
&=\mE\left[ \mE\left[\langle f(x,{Y}^{x,y}_{r}, u)
-\bar{f}(x, u), f(x,{Y}^{x,y}_{s},u)
-\bar{f}(x, u)\rangle\right]\Big|_{(x,y,u)=(X^{\varepsilon,t, x, y}_{k\Lambda},\hat{Y}^{\e}_{k\Lambda},U^{t, x}_{k\Lambda})}\right] \\
&\leq C\mE(1+|X^{\varepsilon,t, x, y}_{k\Lambda}|+|\hat{Y}^{\e}_{k\Lambda}|+|U^{t, x}_{k\Lambda}|)^2e^{-\gamma (s-r)/2}.
\end{align*}
Thus we have by Proposition \ref{MME1} and \eqref{UZ1}, \eqref{YhatYc}, 
\begin{align*}
I_{3}\leq \frac{C\varepsilon^2}{\Lambda^2}(1+|x|+|y|)^2 \int_0^{\frac{\Lambda}{\varepsilon}}\int_r^{\frac{\Lambda}{\varepsilon}}
e^{-\gamma (s-r)/ 2} drds
\leq C_T(1+|x|^2+|y|^2)  \frac{\varepsilon}{\Lambda}.
\end{align*}
Hence we get that for every $s'\in [t, T]$, $x\in \bar D_1$ and $y\in \bar D_2$, 
\ce
\begin{aligned}
\mE \left[ |U^{\varepsilon,t, x, y}_{s'}-U^{t,x}_{s'} \, |^{2} \right] 
\leq C_T \left(\varepsilon^{1/2}+\Lambda+\frac{\varepsilon}{\Lambda} \right).
\end{aligned}\de
Taking $\Lambda=\eps^{1/2}$ leads to that for every $s'\in [t, T]$, $x\in \bar D_1$ and $y\in \bar D_2$,
\ce
\mE \left[ |U^{\varepsilon,t, x, y}_{s'}-U^{t,x}_{s'}|^{2} \right] + \mE \int_{s'}^{T}|{Z}^{\varepsilon,t, x, y}_{r}-Z^{t,x}_{r}|^{2}dr\leq C_{T,x,y}\e^{1/2}.
\de
This completes the proof of the theorem. 
\end{proof}

\br \label{r1}\rm
\begin{enumerate} [(i)]
\item  The convergence rate for $U^{\varepsilon, t,x,y}-U^{t,x,y}$   
in $\eps$ in Theorem \ref{HMbsde222} is the same as that
 for $X^{\varepsilon, x,y}-\bar{X}^x$ in Theorem \ref{HM4}.

\item The domains $D_1, ~D_2$ in Theorem \ref{HMbsde222} are only assumed to be convex.  They include the whole space and smooth convex open domains as special cases.  \qed
  \end{enumerate} 
\er

\br\label{r2}\rm
We would like to point out that \cite[Theorem 3.9]{SWYZ} considers the case when there is no reflection term involved in the forward SDE system  Eq.\eqref{rsde-bsde}, and \eqref{pdeN22} turns into a PDE without boundary conditions. In this case the authors establishes the $L^2$ homogenization rate $\epsilon^{1/2}$ for the homogenization, where the generator $f$ can depend on  $Z^{\varepsilon,t,x,y}$ as well.  When $f$ depends on $Z^{\varepsilon,t,x,y}$, estimates concerning the time difference of $Z^{\varepsilon,t,x,y}$ are needed and the situation is much more complicated when reflected terms and Neumann boundary conditions are involved as well. We leave its study to a future work. \qed
\er

Set $\bar{u}(t,x):=U^{t,x}_t$, where $U^{t,x}$ is the unique solution of \eqref{bsde2b}. Then $\bar{u} (t, x)$ is continuous 
 { on $[0, T]\times \bar D_1$} and, by \cite[Theorem 4.3]{par-zh},  is a viscosity solution to the following PDE:
\ce
\left\{
  \begin{array}{lllll}
    \frac{\partial }{\partial t}  \bar{u} + \bar{\cL}\bar{u}+\bar{f}(x,\bar{u})=0  \quad  \hbox{for }  (t,x)\in[0,T]\times D_1,  \medskip \\
       \frac{\partial  }{\partial \mathbf{n}} \bar{u} =0    \quad  \hbox{for }   (t,x)\in[0,T]\times\partial D_1,  \medskip
    \\
    \bar{u}(T,x)={h}(x)  \quad  \hbox{for }  x\in\bar{D}_1,
  \end{array}
\right.
\de
where $\bar \cL :=\sum_{i,j=1}^n{a}^{ij}(x)\frac{\partial ^2}{\partial x^i\partial x^j}
+ \sum_{i=1}^n  \bar{b}^i(x)\frac{\partial}{\partial x^i}$.
  Hence as a  { direct application} of Theorem \ref{HMbsde222}, a convergence rate can be obtained for the homogenization of the viscosity solution $u^{\eps}(t,x,y)$ of equation \eqref{pdeN22}.
  
\begin{corollary}\label{HM-pde-N3c}
Suppose the conditions of Theorem \ref{HMbsde222} hold. Then for every $T>0$,  there is a constant $C>0$ so that 
$$|u^{\e}(t,x,y)-\bar{u}(t,x)|\leq C\e^{1/4},
\quad   \hbox{for every } (t, x, y) \in[0,T]\times \bar{D}_1\times \bar{D}_2.
$$
\end{corollary}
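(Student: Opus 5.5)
The corollary should follow from Theorem \ref{HMbsde222} evaluated at the initial time $s=t$, combined with the fact that both backward processes are deterministic at that time. Recall that $u^{\e}(t,x,y)=U^{\e,t,x,y}_t$, where $(U^{\e,t,x,y},Z^{\e,t,x,y})$ solves \eqref{bsde22}. Recall also that $\bar u(t,x)=U^{t,x}_t$, where $(U^{t,x},Z^{t,x})$ solves \eqref{bsde2b2}; with $g\equiv 0$ this is the same equation as \eqref{bsde2b}. Both solutions are adapted to the augmented filtration generated by the increments $\tilde W_r-\tilde W_t$, $r\ge t$, so their values at time $t$ are $\cF^{t,T}_t$-measurable. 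By Blumenthal's $0$--$1$ law they are therefore deterministic, as already noted in the text.

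\textbf{Step 1.} Apply Theorem \ref{HMbsde222} with $s=t$. Since both quantities inside the expectation are deterministic, the expectation can be dropped:
\[
|u^{\e}(t,x,y)-\bar u(t,x)|^2=\mE\big[|U^{\e,t,x,y}_t-U^{t,x}_t|^2\big]\le C\e^{1/2}.
\]

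\textbf{Step 2.} Take square roots to obtain $|u^{\e}(t,x,y)-\bar u(t,x)|\le C^{1/2}\e^{1/4}$. This is the claimed bound with constant $C^{1/2}$.

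\textbf{Step 3 (uniformity).} The corollary asserts a single constant valid for all $(t,x,y)\in[0,T]\times\bar D_1\times\bar D_2$. It therefore has to inherit the form in which Theorem \ref{HMbsde222} is stated, namely a constant $C$ not depending on $s$, $x$, $y$. I would check this by going back through the proof of that theorem and confirming that each ingredient comes with a constant depending only on $T$ and the structural constants in \textbf{(A.1)}--\textbf{(A.3)} and \textbf{(H.3)}. The ingredients are:
\begin{itemize}
\item Theorem \ref{HM4};
\item the stability estimate of Lemma \ref{stabilitylemma};
\item the increment bounds \eqref{shM}--\eqref{XbarX};
\item the terms $I_1$, $I_2$, $I_3$, which are controlled through \eqref{lefef2}.
\end{itemize}
The choice $\Lambda=\e^{1/2}$ does not depend on $(t,x,y)$, so it causes no difficulty. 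Uniformity in $t$ is then immediate.

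\textbf{Main obstacle.} The delicate point is the factors $(1+|x|^2+|y|^2)$ that appear in the moment bounds and in \eqref{lefef2}. In the bounded setting of \S\ref{S:4.2} these factors are bounded on $\bar D_1\times\bar D_2$. In the possibly unbounded convex setting, the plan is to rely on the theorem exactly as stated, with its constant $C$ independent of $x$ and $y$. In the uniformity check of Step 3 I would first try to absorb these growth factors using two facts: $b^{(2)}$ and $\sigma^{(2)}$ are bounded, and the frozen fast process contracts exponentially by \textbf{(A.2)}. Together these should make the ergodic remainder in $I_3$ bounded uniformly in the initial point.
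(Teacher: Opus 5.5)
Your Steps 1--2 are exactly the paper's argument. The corollary follows by putting $s=t$ in Theorem~\ref{HMbsde222}, using that $U^{\e,t,x,y}_t=u^\e(t,x,y)$ and $U^{t,x}_t=\bar u(t,x)$ are deterministic, and taking square roots.

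The gap is in Step~3: the uniformity check you sketch would not succeed. The proof of Theorem~\ref{HMbsde222} does not give an $(x,y)$-independent constant; it actually ends with $C_{T,x,y}\e^{1/2}$. The factors $(1+|x|^2+|y|^2)$ enter not only through $I_3$ but also through \eqref{I12}, the moment bounds \eqref{XY12}--\eqref{UeZe12}, the increment bounds \eqref{XbarX} and \eqref{shM2}, the estimate \eqref{YhatYc} (via increments of $X^{\e,t,x,y}$), and the use of Theorem~\ref{HM4}.

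The $y$-dependence is harmless, for a simpler reason than the one you give. Boundedness of $b^{(2)}$ together with the dissipativity in \textbf{(A.2)} yields $\lambda|y-y'|^2\le 2\|b^{(2)}\|_\infty|y-y'|$, so $\bar D_2$ is necessarily bounded. In $I_3$, the factors $|X^{\e,t,x,y}_{k\Lambda}|$ and $|U^{t,x}_{k\Lambda}|$ can be removed because $f$ is Lipschitz in $y$ uniformly in $(x,u)$.

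The $x$-dependence is the real obstruction, and neither boundedness of $b^{(2)},\sigma^{(2)}$ nor contraction of the frozen fast process touches it. It comes from the linear growth in $x$ allowed by \textbf{(A.1)}, and from the linear growth of $f$ and $h$ implied by \textbf{(H.3)}. For instance, take $D_1=\mR$, $\sigma\equiv 1$ and $b(x,y)=-x+\beta(y)$. Then $\mE|X^{\e,t,x,y}_s-X^{\e,t,x,y}_r|^2$ is of order $|x|^2|s-r|^2$ for large $|x|$, so \eqref{XbarX} cannot hold with a constant independent of $x$.

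What Steps 1--2 actually give is $|u^\e(t,x,y)-\bar u(t,x)|\le C_{T,x,y}\e^{1/4}$, with $C_{T,x,y}$ bounded on bounded sets. This is the stated bound when $D_1$ is bounded, and only a locally uniform bound otherwise.

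A single constant over an unbounded $\bar D_1$ needs an ingredient that \textbf{(A.1)}--\textbf{(A.3)} and \textbf{(H.3)} do not supply. One candidate is bounded $b$ and $\sigma$, after which you would still have to check that Theorem~\ref{HM4} and the increments of $U^{t,x}$ are uniform in $x$. You should not expect such uniformity in general. Once $|x|\e$ is large, the slow component travels a distance of order $|x|\e$ during one relaxation time of the fast component, so the frozen-$x$ averaging behind $\bar b$ and $\bar f$ breaks down.

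The paper's one-line deduction has the same issue, since it reads the constant of Theorem~\ref{HMbsde222} as uniform. You were right to single this out as the main obstacle, but the fix you propose does not resolve it.
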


\vskip 0.3truein

\nin \textbf{Acknowledgments:}
 The authors   thank the anonymous reviewer   for  helpful comments. 
  The first author is supported in part by a Simons Foundation grant. 
The second named author Jing Wu is supported in part by NSFC (No. 12471144).

\end{document}